\DeclareMathOperator{\Split}{Split}
\DeclareMathOperator{\SSplit}{SSplit}
\DeclareMathOperator{\DSplit}{DSplit}
\DeclareMathOperator{\SFact}{SFact}
\DeclareMathOperator{\DFact}{DFact}
\DeclareMathOperator{\Ext}{Ext}
\newcommand{\Fl}{\mathbf{Fl}}
\newcommand{\gl}{\mathfrak{gl}}
\newcommand{\ord}{\mathrm{ord}}
\newcommand{\sing}{\mathrm{sing}}
\newcommand{\univ}{\mathrm{univ}}
\newcommand{\stacks}[1]{\cite[Tag \href{https://stacks.math.columbia.edu/tag/#1}{#1}]{stacks-project}}
\title[Flag supervarieties and determinantal ideals II]{Cohomology of flag supervarieties and \\ resolutions of determinantal ideals. II}
\date{December 30, 2024}
\author{Steven V Sam}
\address{Department of Mathematics, University of California, San Diego, CA}
\email{\href{mailto:ssam@ucsd.edu}{ssam@ucsd.edu}}
\urladdr{\url{http://math.ucsd.edu/~ssam/}}
\thanks{SS was supported by NSF grant DMS-2302149.}
\author{Andrew Snowden}
\address{Department of Mathematics, University of Michigan, Ann Arbor, MI}
\email{\href{mailto:asnowden@umich.edu}{asnowden@umich.edu}}
\urladdr{\url{http://www-personal.umich.edu/~asnowden/}}
\thanks{AS was supported by NSF grant DMS-2301871.}
\begin{document}

\begin{abstract}
  We compute the coherent cohomology of the structure sheaf of complex periplectic Grassmannians. In particular, we show that it can be decomposed as a tensor product of the singular cohomology ring of a Grassmannian for either the symplectic or orthogonal group together with a semisimple representation of the periplectic Lie supergroup. The restriction of the latter to its even subgroup has an explicit multiplicity-free description in terms of Schur functors and is closely related to syzygies of (skew-)symmetric determinantal ideals. We develop tools for studying splitting rings for Coxeter groups of types BC and D, which may be of independent interest.
\end{abstract}

\maketitle
\tableofcontents

\section{Introduction}

This article is a continuation of \cite{superres} in which we studied the close connection between the coherent cohomology of the structure sheaf of (complex) super Grassmannians and the syzygies of determinantal varieties in the space of generic matrices. In this article, we consider a parallel situation between periplectic Grassmannians and syzygies of determinantal varieties in the spaces of symmetric and skew-symmetric matrices.

The general problem of computing the cohomology of homogeneous bundles (or even just line bundles) on  homogeneous supervarieties, such as the super Grassmannian, has been considered for quite some time now (for a sample of literature, see \cite{coulembier, gruson, penkov, penkov-serganova}) and is largely open. Its classical (non-super) counterpart has been understood for a long time now and is solved by the Borel--Weil--Bott theorem. In this case, the cohomology groups are zero except for possibly one degree, and is an irreducible representation when this happens. Both of these properties fail for general line bundles on homogeneous supervarieties, but they are well-behaved for ``typical'' weights. The structure sheaf, while fundamentally important, is unfortunately not a line bundle with a typical weight. As we will see, the two properties fail but the cohomology groups will still be highly structured.

The general setup is as follows: given a complex supervariety $X$, let $\cJ \subset \cO_X$ denote the ideal sheaf of $\cO_{X_\ord}$, where $X_{\ord}$ is the underlying variety, and let $\gr \cO_X = \bigoplus_{n \ge 0} \cJ^n/\cJ^{n+1}$ be the associated graded $\cO_{X_\ord}$-algebra of $\cO_X$ with respect to the $\cJ$-adic filtration. Then $\gr \cO_X$ is canonically a quotient of the exterior algebra $\bigwedge^\bullet(\cJ/\cJ^2)$, and we call $X$ smooth if this quotient map is an isomorphism and $X_\ord$ is a smooth variety.

Hence, when $X$ is smooth, we get a spectral sequence for computing the cohomology of $\cO_X$ whose input is the cohomology of exterior powers $\bigwedge^k(\cJ/\cJ^2)$. Our examples of interest, super Grassmannians and periplectic Grassmannians, are smooth. Furthermore, these examples are homogeneous spaces for a supergroup $\bG$, and the vector bundle $\cJ/\cJ^2$ is naturally a subbundle of a trivial bundle $\fg_1 \otimes \cO_{X_{\ord}}$; here $\fg$ is the Lie superalgebra of $\bG$, and $\fg_1$ is its odd subspace. Let $\eta$ denote the quotient bundle and $Y = \Spec_{\cO_{X_\ord}}(\Sym(\eta))$. The projection $\pi \colon Y \to \fg_1$ is an example of a Kempf collapsing, and in this situation, the cohomology of $\bigwedge^k(\cJ/\cJ^2)$ takes on another interpretation as computing the (hyper) Tor groups of the derived pushforward of $\cO_Y$:
\[
  \Tor_i^{\cO_{\fg_1}}(\rR^\bullet \pi_* \cO_Y, \bC)_{i+j} = \rH^j(X_{\ord}, \bigwedge^i(\cJ/\cJ^2)).
\]
This general scenario is studied in great detail in the book \cite{weyman} and was first used to study determinantal varieties in \cite{lascoux}. The higher direct images of $\cO_Y$ are 0 for super Grassmannians (as studied in \cite{superres}) and the periplectic Grassmannians (as we will show), so that the left hand side above is computing the usual Tor groups (or syzygies) of the algebraic variety $\Spec(\pi_* \cO_Y)$. For the super Grassmannian, $\fg_1$ is the space of pairs $(f,g)$ where $f$ is an $m \times n$ matrix and $g$ is an $n \times m$ matrix.

What initially grabbed our interest is that, in this case, the support of $\pi_* \cO_Y$ is a determinantal variety, i.e., defined by a rank condition on $f$ or $g$. Furthermore, $\pi_*\cO_Y$ turns out to be a free module over the structure sheaf of its support, so that the Tor groups above are very easy to relate to the Tor groups of the determinantal variety. As an added bonus, we can use what is known about determinantal varieties to show that the spectral sequence which computes $\rH^*(X,\cO_X)$ is degenerate, and hence we can extract quite a lot of information.

In particular, all of these facts taken together imply that the direct sum of the Tor groups carry an action of the supergroup $\bG$ (because it naturally acts on $\rH^*(X,\cO_X)$). It had been known before that this action exists via subtle algebraic arguments (for example, see \cite{pragacz-weyman, akin-weyman, akin-weyman3, raicu-weyman}), so the novelty here is to give a geometric explanation of this mysterious action.

The case under consideration in this article, the periplectic Grassmannian, shares many of the pleasant features of the previous case. The main change is that $\fg_1$ is now the space of pairs $(f,g)$ where $f$ is a skew-symmetric $n \times n$ matrix and $g$ is a symmetric $n \times n$ matrix, but the support of $\pi_*\cO_Y$ is still a variety defined by a rank condition on either $f$ or $g$. In the first case $\pi_*\cO_Y$ is a free module over the structure sheaf of its support, but in the second case, one actually needs to consider an auxiliary double cover of its support.

Nonetheless, the upshot is that a similar conclusion holds: the direct sum of the Tor groups has an action of the periplectic group $\bG$, which again was previously shown to exist by subtle algebraic arguments in \cite{sam}.

The bulk of the technical work in this article is to prove the freeness statement of $\pi_*\cO_Y$ and to prove that the higher direct images vanish. As a bonus, we are able to identify the ring structure of $\pi_*\cO_Y$ explicitly. For the super Grassmannian, $\pi_*\cO_Y$ is constructed as a splitting ring over the coordinate ring of its support. This is a certain ring-theoretic analogue of the splitting field of a polynomial whose Galois group is the full symmetric group. Up to a technical modification, the polynomial in question is the characteristic polynomial of the product $fg$. A great deal of the work in \cite{superres} to identify this ring structure involved showing that this ring is normal (integrally closed).

We can attempt to do something similar in our current setup, but the splitting ring will no longer be normal. Roughly speaking, the characteristic polynomial is now an even polynomial, which relates to the fact that the Galois group is no longer the full symmetric group, but instead the wreath product of the symmetric group with $\bZ/2$ (i.e., the type B Weyl group). This leads us to the notion of a ``signed splitting ring'' and a certain variant which we call the ``type D splitting ring'' which corresponds to the type D Weyl group. The role of the latter is surprisingly subtle, but ultimately its use is dictated by the fact that Pfaffians exist, i.e., that the determinant of a skew-symmetric matrix whose entries are indeterminates is actually the square of another polynomial. Nonetheless, it is a pleasant feature that all 3 infinite families of Weyl groups play parallel roles in these computations.

Hence, in broad strokes, while the outline of this paper is similar to that of \cite{superres}, many technical aspects end up being more subtle here.

Here is a brief outline of the contents. \S\ref{sec:typeB} is devoted to basic ring-theoretic aspects of the signed splitting rings and \S\ref{sec:typeD} considers the type D analogue. We pay particular attention to developing results for proving that these rings are normal in terms of the discriminant of the given  polynomial. Unfortunately, neither one generalizes the other, so we develop them separately for the sake of clarity. In \S\ref{s:pedetvar}, we apply the splitting ring constructions from the previous sections to determinantal varieties in the symmetric and skew-symmetric settings. One subtlety here is to incorporate a certain double cover of determinantal varieties in the symmetric case. This can be constructed by means of classical invariant theory, so we also review the construction. Finally, we connect the splitting rings to the Kempf collapsing construction discussed above. In \S\ref{sec:pegrass}, we discuss the periplectic Grassmannian and apply all of the results thus far to computing the cohomology of its structure sheaf.

Finally, we comment on possible future directions. The super Grassmannian considered in \cite{superres} and the periplectic Grassmannian considered in this article are just two examples of homogeneous supervarieties. Our articles suggest that the problem of computing the cohomology of the structure sheaf of a general homogeneous supervariety will have a rich structure and be tractable. Beyond that, based on preliminary calculations, it seems that a certain class of homogeneous bundles beyond the structure sheaf should be approachable via the ideas in our papers. In a different direction, we wish to highlight the first author's article \cite{sam-osp} which shows that the Tor groups for certain determinantal varieties carry a representation for the orthosymplectic Lie superalgebra. The main difference there is that the ambient ring is no longer a polynomial ring, but rather a certain complete intersection ring, so that these representations are infinite-dimensional. It would be great if these results can be connected to the methods discussed above.

Furthermore, \cite{bwfact} shows that, in a certain stable range, the cohomology of Schur functors applied to the tautological sub and quotient bundles of the super Grassmannian can be expressed as a free module over the cohomology of the structure sheaf. Work in progress \cite{bwfact2} shows that the same is true for orthosymplectic Grassmannians. This suggests that many other cohomology calculations over homogeneous supervarieties could be tractable as well. 

\section{Signed splitting rings} \label{sec:typeB}

\subsection{Signed splitting rings} \label{ss:B-split}

Let $A$ be a ring and let $f=\sum_{i=0}^n a_{2n-2i} u^{2i}$ be a monic polynomial in $u^2$ with coefficients in $A$ (so $a_0=1$). We define the \defn{signed splitting ring} of $f$, denoted $\SSplit_A(f)$, to be the quotient $A[\eta_1, \ldots, \eta_n] / I$, where $I$ is the ideal generated by equating the coefficients of $f(u)=\prod_{i=1}^n (u^2-\eta_i^2)$. Explicitly, $I$ is generated by
\[
  a_{2i} - (-1)^ie_i(\eta_1^2, \ldots, \eta_n^2),
\]
where $e_i$ is the $i$th elementary symmetric polynomial. If $A$ is graded and $a_{2i}$ has degree $2i$ then $\SSplit_A(f)$ is graded with $\eta_i$ of degree~1. The hyperoctahedral group $\fW_n=\fS_n \ltimes (\bZ/2)^n$ acts $A$-linearly on $\SSplit_A(f)$, with $\fS_n$ permuting the $\eta_i$'s and the $i$th copy of $\bZ/2$ acting by $\pm 1$ on $\eta_i$.

Formation of the signed splitting ring is compatible with base change: if $A \to A'$ is a homomorphism, and $f'$ is the image of $f$ under $A[u] \to A'[u]$, then we have a natural isomorphism
\[
  \SSplit_{A'}(f')=A' \otimes_A \SSplit_A(f).
\]
In what follows, we let $A$ be a noetherian ring and put $B=\SSplit_A(f)$. We let $\Delta \in A$ be the discriminant of $f(u)$, which is $4^n \eta_1^2 \cdots \eta_n^2 \prod_{i < j} (\eta_i^2 - \eta_j^2)^2$. Since it comes up in the next statement, note that this implies that 2 is invertible if $\Delta$ is a unit.

Let $\tilde{f} = \sum_{i=0}^n a_{2n-2i} v^i$ be the monic polynomial in $v$ such that $\tilde{f}(u^2) = f(u)$.

We recall from \cite{superres} that the \defn{splitting ring} of $\tilde{f}$, denoted $\tilde{B}=\Split_A(\tilde{f})$, is the quotient $A[\xi_1,\dots,\xi_n]/I$, where $I$ is the ideal generated by equating the coefficients of $\tilde{f}(u) = \prod_{i=1}^n (u-\xi_i)$. Explicitly, $I$ is generated
\[
  a_{2i} - (-1)^i e_i(\xi_1,\dots,\xi_n).
\]
Note that we also have
\[
  B = \tilde{B}[\eta_1,\dots,\eta_n] / (\eta_1^2 - \xi_1,\dots, \eta_n^2- \xi_n).
\]

\subsection{Basic results}
Below, we recall that a ring homomorphism is \defn{syntomic} if it is flat, of finite presentation, and all of its fibers are locally complete intersection rings.

\begin{proposition} \label{prop:sign-split}
We have the following:
\begin{enumerate}
\item As an $A$-module, $B$ is free of rank $2^n n!$.
\item The map $A \to B$ is syntomic.
\item If $A$ satisfies Serre's condition $(S_k)$, then so does $B$. In particular, if $A$ is Cohen--Macaulay, then so is $B$.
\item If $\Delta$ is a unit of $A$ then $A \to B$ is \'etale.
\item If $A$ is reduced and $\Delta$ is a non-zerodivisor then $B$ is reduced.
\end{enumerate}
\end{proposition}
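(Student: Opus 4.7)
The plan is to exploit the factorization $A \to \tilde{B} \to B$ recorded just before the statement and bootstrap from the analogous results for $\tilde{B}/A$ proved in \cite{superres}. Since $B = \tilde{B}[\eta_1, \ldots, \eta_n]/(\eta_1^2 - \xi_1, \ldots, \eta_n^2 - \xi_n)$ presents $B$ over $\tilde{B}$ by $n$ monic polynomials in $n$ separate variables, the second map in the factorization is very tractable and most of the work is simply to propagate properties along it.

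For part (a), I would quotient one $\eta_i$ at a time: each step $R \to R[\eta_i]/(\eta_i^2 - \xi_i)$ is free of rank $2$ with basis $\{1, \eta_i\}$, so $B$ is free of rank $2^n$ over $\tilde{B}$; combined with the rank $n!$ freeness of $\tilde{B}/A$ from \cite{superres}, this gives the total rank $2^n n!$. Part (b) then follows: $A \to B$ is finitely presented by construction, flat by part (a), and each fiber $B \otimes_A k$ is cut out of affine $n$-space over $k$ by the $n$ explicit equations $a_{2i} - (-1)^i e_i(\eta^2)$, which by finiteness must be a complete intersection of the expected codimension. For part (c), I would invoke the standard preservation of Serre's $(S_k)$ under a flat map with $(S_k)$ fibers; since the fibers here are complete intersections, they are Cohen--Macaulay, which handles all $k$ at once.

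For parts (d) and (e), the key identity is
\[
  \Delta \;=\; 4^n \xi_1 \cdots \xi_n \prod_{i<j}(\xi_i - \xi_j)^2 \;=\; 4^n (-1)^n a_{2n} \cdot \tilde{\Delta}(\tilde{f}),
\]
where I have used that $a_{2n} = (-1)^n e_n(\xi_1, \ldots, \xi_n)$ is the constant term of $\tilde{f}$. This lives in $A$, so inverting $\Delta$ inverts $2$, the constant term $a_{2n}$, and the discriminant $\tilde{\Delta}(\tilde{f})$ in $A$ itself. The latter two facts, together with the analog of (d) from \cite{superres}, show that $A[\Delta^{-1}] \to \tilde{B}[\Delta^{-1}]$ is \'etale; and once $a_{2n}$ is a unit in $A$, the product $\xi_1 \cdots \xi_n$ is a unit in $\tilde{B}[\Delta^{-1}]$, so each commuting factor $\xi_i$ is too, making the discriminant $-4\xi_i$ of $\eta_i^2 - \xi_i$ a unit. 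Hence $\tilde{B}[\Delta^{-1}] \to B[\Delta^{-1}]$ is also \'etale, and composing gives (d). For (e), flatness of $A \to B$ and the nonzerodivisor hypothesis on $\Delta$ imply that $\Delta$ remains a nonzerodivisor on $B$, so $B$ embeds into $B[\Delta^{-1}]$; since $A[\Delta^{-1}]$ is reduced and $A[\Delta^{-1}] \to B[\Delta^{-1}]$ is \'etale, the target is reduced, and therefore so is $B$.

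The main subtlety I anticipate is the small but crucial step that inverting $\Delta$ in $A$ forces each individual $\xi_i$ in $\tilde{B}$ to be a unit, not merely their product; this is handled by noting that $\xi_1 \cdots \xi_n = (-1)^n a_{2n}$ lies in $A$ and divides $\Delta$ up to a unit, after which commutativity takes over. Everything else is bookkeeping, provided one may quote the corresponding statements for ordinary splitting rings from \cite{superres}; this is really the payoff of the definitional choice $B = \tilde{B}[\eta_i]/(\eta_i^2 - \xi_i)$, which reduces all structural properties of $A \to B$ to the corresponding ones for $A \to \tilde{B}$ together with the straightforward adjunction of $n$ square roots.
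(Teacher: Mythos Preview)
Your proposal is correct and follows essentially the same approach as the paper: viewing $A \to B$ as the composition of the ordinary splitting ring $A \to \tilde{B}$ with $n$ successive splitting rings for the quadratics $u^2 - \xi_i$, and then invoking the corresponding results from \cite{superres} together with the fact that syntomic and \'etale maps are closed under composition. You spell out more of the bookkeeping (notably the discriminant factorization $\Delta = 4^n(-1)^n a_{2n}\tilde{\Delta}$ and the reducedness argument via embedding $B \hookrightarrow B[\Delta^{-1}]$), whereas the paper compresses the entire proof into a single citation of \cite[Proposition~3.1]{superres}.
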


\begin{proof}
From the presentation $B = \tilde{B}[\eta_1,\dots,\eta_n] / (\eta_1^2 - \xi_1,\dots, \eta_n^2- \xi_n)$, we can interpret $A \to B$ as an iterated splitting ring: one for $\tilde{f}$, then $n$ more extensions for the degree $2$ polynomials $u^2 - \xi_i$. Hence all of these properties follow from \cite[Proposition 3.1]{superres} noting that both the syntomic and \'etale properties are preserved under taking composition.
\end{proof}

\begin{proposition} \label{prop:split-rep}
Suppose that $2$ and $n!$ are invertible in $A$. Then $B$ is free of rank one as an $A[\fW_n]$-module.
\end{proposition}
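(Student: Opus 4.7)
The plan is to reduce to the type A analogue---namely, that the ordinary splitting ring $\tilde{B}=\Split_A(\tilde{f})$ is free of rank one over $A[\fS_n]$ whenever $n!\in A^\times$, the expected companion result to the \cite[Proposition 3.1]{superres} invoked above. Granting it, fix an $A[\fS_n]$-generator $\tilde{b}\in\tilde{B}$, so that $\{\sigma\tilde{b}\}_{\sigma\in\fS_n}$ is an $A$-basis of $\tilde{B}$. Since $B=\tilde{B}[\eta_1,\ldots,\eta_n]/(\eta_i^2-\xi_i)$ is $\tilde{B}$-free on the monomials $\{\eta^\alpha:\alpha\in\{0,1\}^n\}$, the family $\{\sigma\tilde{b}\cdot\eta^\alpha\}_{\sigma,\alpha}$ is an $A$-basis of $B$ of cardinality $|\fW_n|=2^n n!$.

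Take as candidate $A[\fW_n]$-generator the element $b=\tilde{b}\cdot\prod_{i=1}^n(1+\eta_i)\in B$, and consider the $A[\fW_n]$-linear map $\varphi\colon A[\fW_n]\to B$, $g\mapsto g\cdot b$. Writing a general element of $\fW_n$ as $\sigma\tau$ with $\sigma\in\fS_n$ and $\tau\in(\bZ/2)^n$ acting by signs $\tau(\eta_i)=\epsilon_i\eta_i$, and using that $\tau$ fixes $\tilde{B}$ (since $\eta_i^2=\xi_i$) while $(\sigma\tau)(\eta_i)=\epsilon_i\eta_{\sigma(i)}$, one computes
\[
  \varphi(\sigma\tau) \;=\; \sigma(\tilde{b})\cdot\prod_{j=1}^{n}\bigl(1+\epsilon_{\sigma^{-1}(j)}\eta_j\bigr) \;=\; \sigma(\tilde{b})\cdot\sum_{\alpha\in\{0,1\}^n}\Bigl(\prod_{i=1}^{n}\epsilon_i^{\alpha_{\sigma(i)}}\Bigr)\eta^\alpha.
\]
In the $A$-basis $\{\sigma'\tilde{b}\cdot\eta^{\alpha'}\}$ of $B$, the matrix of $\varphi$ is block-diagonal with one $2^n\times 2^n$ block per $\sigma\in\fS_n$ (the $\sigma'=\sigma$ block); the row relabeling $\alpha'\mapsto\beta$ with $\beta_i=\alpha'_{\sigma(i)}$ identifies each such block with the standard Hadamard matrix $H_n=\bigl(\begin{smallmatrix}1&1\\1&-1\end{smallmatrix}\bigr)^{\otimes n}$ of entries $(\beta,\epsilon)\mapsto\prod_i\epsilon_i^{\beta_i}$, whose determinant is $\pm 2^{n\cdot 2^{n-1}}$. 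Since $2\in A^\times$, the determinant of $\varphi$ is a unit; as $\varphi$ is a map between free $A$-modules of equal rank, this shows $\varphi$ is an $A$-module---and hence an $A[\fW_n]$-module---isomorphism.

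The main obstacle is the semidirect-product bookkeeping: one must track how $\sigma\in\fS_n$ reshuffles the sign pattern $(\epsilon_1,\ldots,\epsilon_n)$ when passing from $(\sigma\tau)(\eta_i)=\epsilon_i\eta_{\sigma(i)}$ to the monomial expansion, and confirm that the row-relabeling $\alpha'\mapsto\alpha'\circ\sigma$ yields the same Hadamard matrix independently of $\sigma$. The Kronecker-product calculation of $\det H_n$ is routine, and the hypothesis $n!\in A^\times$ enters only through the type A input; if that input is not already isolated in \cite{superres}, it admits a parallel proof via a Vandermonde-type generator (e.g., a staircase monomial in the $\xi_i$) combined with the same block-diagonal determinant argument.
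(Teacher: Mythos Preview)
Your argument is correct. The paper's own proof is just a one-line pointer (``similar to \cite[Proposition~3.2]{superres}''), so a detailed comparison is impossible; but your approach is in the same spirit and in fact reduces cleanly \emph{to} that very type~A result rather than re-running its proof in type~B. Concretely, you exploit the iterated description $B=\tilde{B}[\eta_1,\dots,\eta_n]/(\eta_i^2-\xi_i)$ already used in Proposition~\ref{prop:sign-split}, take an $A[\fS_n]$-generator $\tilde b$ of $\tilde B$ from the type~A case, and promote it to the $A[\fW_n]$-generator $\tilde b\prod_i(1+\eta_i)$ via a Hadamard determinant. This is a pleasant packaging: the hypothesis $n!\in A^\times$ is used only through the black-box type~A input, while $2\in A^\times$ enters exactly through $\det H_n=\pm 2^{n\cdot 2^{n-1}}$.

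Two small remarks. First, the type~A statement you invoke is precisely \cite[Proposition~3.2]{superres} (not~3.1), which is the reference the paper itself cites; so there is no need for the caveat in your final paragraph. Second, your semidirect-product bookkeeping is fine, but it is worth saying explicitly once that the inclusion $\tilde B\hookrightarrow B$ is $\fS_n$-equivariant (since $\sigma(\xi_i)=\sigma(\eta_i^2)=\eta_{\sigma(i)}^2=\xi_{\sigma(i)}$) and that $(\bZ/2)^n$ acts trivially on $\tilde B$; this is what justifies $(\sigma\tau)(\tilde b)=\sigma(\tilde b)$ and makes the block-diagonal structure transparent.
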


\begin{proof}
  The proof is similar to \cite[Proposition 3.2]{superres}.
\end{proof}

We pause to give a geometric source of splitting rings (see \cite[Theorem 8.2]{GSS}). Let $X$ be a smooth variety over an algebraically closed field and let $\cE$ be a rank $2n$ vector bundle on $X$ equipped with a symplectic form $\bigwedge^2 \cE \to \cO_X$, i.e., an alternating $2$-form which is non-degenerate at each fiber. Let $A$ be the Chow ring of $X$ and let $a_{2i} = c_{2i}(\cE)$, where $c_{2i}(\cE)$ is the $2i$th Chern class of $\cE$ (the existence of the symplectic form forces the odd Chern classes to vanish). Then the signed splitting ring $B$ of the polynomial
\[
  f(u) = \sum_{i=0}^n c_{2n-2i}(\cE) u^{2i}
\]
is the Chow ring of the relative isotropic flag variety $\bI\Fl(\cE)$.

Informally, $\bI\Fl(\cE)$ is a variety with a map to $X$ such that the fiber over $x \in X$ is the set of increasing sequences of subspaces
\[
  R_1 \subset R_2 \subset \cdots \subset R_n \subset \cE|_x
\]
such that the restriction of the symplectic form to $R_n$ is identically $0$ (i.e., $R_n$ is an isotropic subspace). More formally, consider the projective bundle $\pi \colon \bP(\cE) \to X$ with its tautological sequence
\[
  0 \to \cR \to \pi^* \cE \to \cO_{\bP(\cE)}(1) \to 0.
\]
Then the symplectic form pulls back to one on $\pi^* \cE$ and $\cR$ contains its orthogonal complement $\cR^\perp$, which is a rank 1 subbundle. If $\rank \cE = 2$, then define $\bI\Fl(\cE) = \bP(\cE)$. Otherwise, we inductively define $\bI\Fl(\cE) = \bI\Fl(\cR/\cR^\perp)$.

From the above definition of $\bI\Fl(\cE)$, it follows that the pullback of $\cE$ has a maximal flag of isotropic subbundles (i.e., whose successive quotients are line bundles and the rank of the biggest one is $n$), and the Chern classes of these line bundles are identified with the $\eta_i$. 

An important case for us is when $X = \Spec(\bC)$ and $\cE=\bC^{2n}$, so that $f=u^{2n}$. In that case, this discussion gives the following result (we note that the Chow ring and singular cohomology ring of $\bI\Fl(\bC^{2n})$ are isomorphic since it has a cellular decomposition).

\begin{proposition} \label{prop:split-flag}
Suppose that $f=u^{2n}$. Regard $A$ as graded and concentrated in degree~$0$, and $B$ as graded with each $\eta_i$ of degree~$2$. Then we have a natural isomorphism of graded rings
\begin{displaymath}
B = A \otimes \rH^*_{\sing}(\bI\Fl(\bC^{2n}), \bZ).
\end{displaymath}
\end{proposition}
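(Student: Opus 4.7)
The plan is to reduce to the universal case $A = \bZ$ by base change and then invoke the geometric description of $\SSplit_A(f)$ as the Chow ring of the relative isotropic flag variety that was recalled in the paragraph just above the statement (which in turn comes from \cite[Theorem 8.2]{GSS}). No fresh work is really needed: everything has been packaged into previously cited results, and the proposition is essentially a specialization of them.

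First I would observe that compatibility of signed splitting rings with base change, noted at the start of \S\ref{ss:B-split}, gives
\[
\SSplit_A(u^{2n}) = A \otimes_\bZ \SSplit_\bZ(u^{2n}),
\]
so it suffices to establish the isomorphism when $A=\bZ$. Next, I would specialize the geometric setup preceding the proposition to $X = \Spec(\bC)$ with $\cE = \bC^{2n}$ carrying the standard symplectic form. Then the Chow ring of $X$ is $\bZ$, and since $\cE$ is a trivial bundle on a point all of its Chern classes vanish except $c_0(\cE) = 1$; hence the polynomial $\sum_{i=0}^n c_{2n-2i}(\cE) u^{2i}$ is exactly $u^{2n}$. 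The geometric description then identifies $\SSplit_\bZ(u^{2n})$ with the Chow ring of $\bI\Fl(\bC^{2n})$, with the generators $\eta_i$ corresponding to the first Chern classes of the tautological isotropic line subquotients in the flag.

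Finally, $\bI\Fl(\bC^{2n})$ admits a cellular decomposition by symplectic Schubert cells, so the cycle class map is an isomorphism from its Chow ring onto $\rH^*_{\sing}(\bI\Fl(\bC^{2n}), \bZ)$; combining with the previous step yields the required identification over $\bZ$, which we then tensor back up to $A$. The only bookkeeping issue — and what I would flag as the subtlest point rather than a real obstacle — is the grading: a codimension-one Chern class has Chow degree $1$ but singular-cohomology degree $2$, and the content of the proposition's grading convention (each $\eta_i$ in degree $2$) is precisely that the identification uses this doubled topological grading. Once one fixes the convention that $\eta_i$ is the first Chern class of the tautological line bundle at the $i$th step of the isotropic flag, the doubling is uniform on both sides and the identification is a natural isomorphism of graded rings.
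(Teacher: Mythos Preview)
Your proposal is correct and follows essentially the same route as the paper: the proposition is stated there as an immediate consequence of the preceding geometric discussion (the \cite[Theorem 8.2]{GSS} identification of the signed splitting ring with the Chow ring of the relative isotropic flag variety), specialized to $X=\Spec(\bC)$ and $\cE=\bC^{2n}$, together with the remark that the cellular decomposition makes the Chow ring coincide with singular cohomology. Your explicit use of base change to pass from $\bZ$ to a general $A$ and your attention to the doubling of degrees are both implicit in the paper's treatment; nothing further is needed.
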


\begin{remark}
  The signed splitting ring, as we have defined it, does not behave well when $2$ is not invertible. This is an artifact of using the quadratic extensions $\eta_i^2 - \xi_i$ to obtain $B$ from $\tilde{B}$. Instead, one should allow more general quadratic polynomials which are not forced to have repeated roots over a field of characteristic 2. However, since we are mostly interested in the characteristic 0 situation, we will not pursue this level of generality.
\end{remark}

\subsection{Normality criterion} \label{ss:b-normal}

Consider the following $n+\binom{n}{2}$ equations on $\Spec(B)$:
\begin{itemize}
\item $\eta_i=0$ for some $i=1,\dots,n$.
\item $\eta_i^2 = \eta_j^2$ for some $i \ne j$.
\end{itemize}
Let $\ol{E} \subset \Spec(B)$ be the locus where at least two of these equations (not necessarily from different bullet points) vanish, and let $E \subset \Spec(A)$ be the image of $\ol{E}$. Note that $E$ is closed since $\ol{E}$ is closed and $A \to B$ is finite. We say that an element $f$ of a normal ring $R$ is {\bf squarefree} if $v_{\fp}(f) \in \{0,1\}$ for all height one primes $\fp$ of $R$, where $v_{\fp}$ denotes the valuation associated to $\fp$. We say that a subset of $\Spec(A)$ has \defn{codimension $\ge c$} if all primes it contains have height $\ge c$.

We will assume that $2$ is invertible in $A$ in this section (this is forced by the assumptions in the following results).

\begin{proposition} \label{prop:sign-norm-crit}
Suppose the following conditions hold:
\begin{enumerate}
\item $A$ is normal,
\item $\Delta$ is squarefree and a non-zerodivisor,
\item $E$ has codimension $\ge 2$ in $\Spec(A)$.
\end{enumerate}
Then $B$ is normal.
\end{proposition}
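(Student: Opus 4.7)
The plan is to apply Serre's criterion for normality: $B$ is normal iff it satisfies $(R_1)$ and $(S_2)$. Condition $(S_2)$ is immediate from Proposition~\ref{prop:sign-split}(c), and $B$ is reduced by Proposition~\ref{prop:sign-split}(e). So it suffices to show $B_\fq$ is a DVR for every height-$1$ prime $\fq$ of $B$.

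Fix such a $\fq$ and set $\fp = \fq \cap A$, which has height $1$ by finite flatness; normality of $A$ then gives that $A_\fp$ is a DVR. If $\Delta \notin \fp$, then $A_\fp \to B \otimes_A A_\fp$ is \'etale by Proposition~\ref{prop:sign-split}(d), so $B_\fq$ is regular. Hence assume $\Delta \in \fp$; the squarefree hypothesis then forces $v_\fp(\Delta) = 1$, so $\Delta$ is a uniformizer of $A_\fp$. The codimension assumption on $E$ forces $\fp \notin E$, hence $\fq \notin \ol{E}$, so exactly one of the $n + \binom{n}{2}$ listed equations vanishes at $\fq$ (at most one by definition of $\ol{E}$, at least one since $\Delta \in \fq$). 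Let $\gamma$ denote either the unique $\eta_k \in \fq$ in the first case, or in the second case the unique factor of $\eta_k^2 - \eta_\ell^2 = (\eta_k - \eta_\ell)(\eta_k + \eta_\ell)$ lying in $\fq$ (both factors in $\fq$ would force $\eta_k, \eta_\ell \in \fq$ since $2$ is invertible, violating $\fq \notin \ol{E}$). Substituting into $\Delta = 4^n \prod_i \eta_i^2 \prod_{i<j}(\eta_i^2-\eta_j^2)^2$ and using that every factor other than the one containing $\gamma$ is a unit in $B_\fq$, we obtain $\Delta = u \gamma^2$ for some unit $u \in B_\fq$, whence $\fp B_\fq = \gamma^2 B_\fq$.

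It now suffices to prove $\fq B_\fq = \gamma B_\fq$, which reduces to two claims: (a) the image of $\gamma$ in $B_\fq/\fp B_\fq$ is nonzero, and (b) $\dim_{k(\fq)} B_\fq/\fp B_\fq = 2$. For (a): if $\gamma = \pi h$ in $B_\fq$ with $\pi$ a uniformizer of $A_\fp$, then $\gamma^2 = \pi^2 h^2$; but $\gamma^2 = u^{-1} \Delta$ equals a unit times $\pi$, and $\pi$ is a non-zerodivisor in $B_\fq$ by flatness, so $\pi h^2$ would be a unit, forcing $\pi \in B_\fq^\times$, contradicting $\pi \in \fq$. For (b), base change to an algebraic closure $\bar{K}$ of $K := k(\fp)$. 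The algebra $B \otimes_A \bar{K} = \SSplit_{\bar{K}}(\bar{f})$ has $\bar{K}$-dimension $2^n n!$, and since $\fp \notin E$ the root multiset of $\bar{f}$ has exactly one doubled pair with all others simple. An explicit count of tuples $(\eta_1, \dots, \eta_n) \in \bar{K}^n$ realizing this multiset yields $2^{n-1} n!$ geometric closed points in both cases b.i and b.ii, and $\fW_n$ acts transitively on them with stabilizer of order $2$, so each geometric local ring has $\bar{K}$-length exactly $2$. Pulling back, $\fW_n$ acts transitively on the primes $\fq' \mid \fp$; write $N$ for the number of such primes, $\ell$ for the common value of $\mathrm{length}_{B_{\fq'}}(B_{\fq'}/\fp B_{\fq'})$, and $d$ for $[k(\fq'):k(\fp)]$. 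The sum $N \ell d = 2^n n!$ (from the $k(\fp)$-dimension of the fiber) together with $N \cdot [k(\fq'):k(\fp)]_{\mathrm{sep}} = 2^{n-1} n!$ (from counting geometric points) yields $\ell \cdot [k(\fq):k(\fp)]_{\mathrm{insep}} = 2$; since $2$ is invertible in $A$, the residue characteristic differs from $2$, so the inseparable degree (a power of that characteristic) is $1$ and $\ell = 2$. Combining (a) and (b), $B_\fq/\fp B_\fq \cong k(\fq)[\epsilon]/(\epsilon^2)$ with $\epsilon$ the image of $\gamma$, so $\fq B_\fq = (\gamma, \fp)B_\fq = (\gamma, \gamma^2)B_\fq = \gamma B_\fq$ is principal, making $B_\fq$ a DVR.

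The main technical obstacle is the fiber-length computation in (b): one must carefully enumerate tuples realizing the root multiset of $\bar{f}$ in each of cases b.i and b.ii, verify that the $\fW_n$-orbit on geometric points has stabilizer of order exactly $2$, and then track separable-versus-inseparable residue extensions carefully enough to extract $\ell = 2$ from the geometric count. The invertibility of $2$ plays an essential role at this last step.
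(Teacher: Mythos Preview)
Your proof is correct, but it takes a genuinely different route from the paper's.

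The paper exploits the iterated structure $A \to \tilde{B} \to B$, where $\tilde{B} = \Split_A(\tilde{f})$ and $B = \tilde{B}[\eta_1,\dots,\eta_n]/(\eta_i^2 - \xi_i)$. It first shows $\tilde{B}$ is normal by invoking \cite[Proposition~3.5]{superres} (using that $\tilde{\Delta}$ divides $\Delta$ and that the relevant ``bad locus'' for $\tilde{B}$ sits inside $E$), then checks that $a_{2n}$ has valuation $\le 1$ at every height-one prime of $\tilde{B}$ (using that $\Delta = 4^n a_{2n}\tilde{\Delta}$ is squarefree and that $A_\fp \to \tilde{B}_\fq$ is \'etale where $\tilde{\Delta}$ is a unit), and finally applies \cite[Proposition~3.5]{superres} once more to the $n$ quadratic extensions. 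This is short precisely because it recycles the type~A result wholesale.

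Your argument, by contrast, works directly with $B$ over $A$ via Serre's criterion, pinning down the uniformizer $\gamma$ at each height-one prime and computing the fiber length by a geometric-point count over $\bar K$. This is the same \emph{style} of argument the paper uses for the type~D case (Proposition~\ref{prop:d-norm-crit}), where no convenient intermediate ring is available; in that sense your approach is more uniform across types. Two remarks on presentation: (i) the transitivity of $\fW_n$ on the primes $\fq'\mid\fp$ is not strictly needed --- once you know $\fW_n$ is transitive on geometric points and each geometric local ring has length $2$, you get $\ell\cdot[k(\fq):K]_{\mathrm{insep}}=2$ for your specific $\fq$ directly; (ii) the final step ``$\fq B_\fq$ principal $\Rightarrow$ DVR'' deserves one more word, e.g.\ that a one-dimensional Noetherian local ring with principal maximal ideal is automatically a DVR (the $(S_2)$ you already established makes this immediate).
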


\begin{proof}
  Let $\tilde{E} \subset \Spec(\tilde{B})$ be the locus where at least two of the conditions $\xi_i = \xi_j$ for $i \ne j$ are satisfied. This is contained in the image of $\ol{E}$ under $\Spec(B) \to \Spec(\tilde{B})$, and so by (c), the image of $\tilde{E}$ in $\Spec(A)$ has codimension $\ge 2$. Let $\tilde{\Delta}$ be the discriminant of $\tilde{f}$. Then $\Delta = 4^n a_{2n} \tilde{\Delta}$, and hence $\tilde{B}$ is normal by \cite[Proposition 3.5]{superres}.

  We claim that $a_{2n}$ has valuation $0$ or $1$ for any prime of $\tilde{B}$. Let $\fq$ be a prime of $\tilde{B}$ lying over a prime $\fp$ in $A$. First, since $\Delta$ is squarefree, either $a_{2n}$ is a unit in $A_\fp$, or $\tilde{\Delta}$ is a unit in $A_\fp$. The claim is immediate in the first case. In the second case, $A_\fp \to \tilde{B}_\fq$ is \'etale by \cite[Proposition 3.1(d)]{superres}, and so the valuation $v_\fq$ on $\tilde{B}_\fq$ restricts to the valuation $v_\fp$ on $A_\fp$, and hence the claim follows.

  Finally, we deduce that $B$ is normal by applying \cite[Proposition 3.5]{superres} again (using that $B$ is an iterated splitting ring starting from $\tilde{B}$).
\end{proof}

We now give a variant of Proposition~\ref{prop:sign-norm-crit}. For $\Delta \in A$, define $V(\Delta, \partial \Delta) \subset \Spec(A)$ to be the set of points $x \in \Spec(A)$ at which $\Delta$ vanishes to order two, in the sense that its image under $A \to A_x$ belongs to $\fm_x^2$, where $\fm_x$ is the maximal ideal of $A_x$. If $A$ is finitely generated over a field $k$ and $x$ is a smooth point of $\Spec(A)$, then $x$ belongs to $V(\Delta, \partial \Delta)$ if and only if $\Delta=0$ in the residue field $\kappa(x)$ and $d\Delta=0$ in $\Omega^1_{A/k} \otimes_A \kappa(x)$; since $\Omega^1_{A/k}$ is locally free on the smooth locus, this shows that $V(\Delta, \partial \Delta)$ is closed in the smooth locus.

\begin{lemma} \label{lem:E-partialDelta}
We have $E \subset V(\Delta, \partial \Delta)$.
\end{lemma}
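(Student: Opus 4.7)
The plan is to exploit the factorization $\Delta = 4^n a_{2n} \tilde{\Delta}$ given just before Proposition~\ref{prop:sign-norm-crit}, where $\tilde{\Delta}$ is the discriminant of $\tilde{f}$. Since $2$ is invertible, $4^n$ is a unit in $A$, so it suffices to show $a_{2n}\tilde{\Delta} \in \fm_x^2$ for every $x \in E$. The basic input is that both $a_{2n}$ and $\tilde{\Delta}$ lie in $A$, and for any preimage $y \in \bar{E}$ of $x$ the inclusion $\kappa(x) \hookrightarrow \kappa(y)$ means that an element of $A$ lies in $\fm_x$ iff its image in $\kappa(y)$ vanishes. This lets us transfer vanishing from $y$ down to $x$ for elements of $A$, which is the safer substitute for transferring vanishing \emph{orders} (which would fail along the ramified finite map $A \to B$).

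Fix $x \in E$ and a preimage $y \in \bar{E}$, and case-split on which two of the equations $\eta_i = 0$ or $\eta_i^2 = \eta_j^2$ are active at $y$. Suppose first that at least one is of the form $\eta_i = 0$. Then $\xi_i = \eta_i^2$ vanishes in $\kappa(y)$, so $a_{2n} = (-1)^n \xi_1 \cdots \xi_n$ lies in $\fm_x$. A second active equation contributes a factor of $\tilde{\Delta} = \prod_{i<j}(\xi_i - \xi_j)^2$ that vanishes in $\kappa(y)$: another $\eta_j = 0$ gives $\xi_j = 0$; an equality $\eta_j^2 = \eta_k^2$ with $\{j,k\}$ disjoint from $\{i\}$ gives $\xi_j = \xi_k$; and the overlap case $\eta_i^2 = \eta_k^2$ (combined with $\xi_i = 0$) forces $\xi_k = 0$, so $(\xi_i - \xi_k)^2$ vanishes. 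In all these subcases $\tilde{\Delta} \in \fm_x$, so $a_{2n}\tilde{\Delta} \in \fm_x \cdot \fm_x \subset \fm_x^2$.

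The remaining case is when both active relations have the form $\eta^2_\bullet = \eta^2_\bullet$. Projecting to $\Spec(\tilde{B})$, this forces $\tilde{f}$ to have either a triple root (shared-index subcase) or two disjoint double roots at the image of $y$. In both configurations, $\tilde{\Delta}$ vanishes to order $\ge 2$ at $x$ in $A$. This is the type~A analog of the present lemma and is essentially what underlies \cite[Proposition~3.5]{superres}; it can also be verified directly by expanding $\tilde{\Delta}$ in elementary-symmetric coordinates near $x$ (locally, $\Spec(A)$ looks like a product $\mathbb{A}^r/\fS_r \times \cdots$ and $\tilde{\Delta}$ factors accordingly). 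Once $\tilde{\Delta} \in \fm_x^2$, the factorization gives $\Delta = 4^n a_{2n}\tilde{\Delta} \in \fm_x^2$ regardless of $a_{2n}$. The conceptual heart of the proof is this last case; the mixed and pure $\eta=0$ cases are routine unpacking of the product formula.
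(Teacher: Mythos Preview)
Your approach is essentially the same as the paper's: factor $\Delta = 4^n a_{2n}\tilde\Delta$ and show that the vanishing orders of the two factors at $x$ sum to at least~$2$. The paper packages your case analysis into a single count (setting $c$ so that $n-c=\#\{\eta_1^2,\dots,\eta_n^2\}$ in the residue field and $c'\in\{0,1\}$ according to whether some $\eta_i$ vanishes, then observing $c+c'\ge 2$), but the content is identical.

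Two small issues. First, the type~A input you need in Case~2---that $\tilde\Delta\in\fm_x^2$ when $\tilde f$ has a triple root or two disjoint double roots over $\kappa(y)$---is \cite[Lemma~3.8]{superres}, not \cite[Proposition~3.5]{superres}; the latter is the normality criterion for ordinary splitting rings and does not by itself give the order-of-vanishing statement. Second, your alternative sketch ``locally, $\Spec(A)$ looks like $\mathbb{A}^r/\fS_r\times\cdots$'' is not valid for an arbitrary noetherian ring $A$. The correct way to make that heuristic precise is to reduce to the universal base $A^{\mathrm{univ}}=\bZ[a_2,\dots,a_{2n}]$ (where the identification with $\mathbb{A}^n/\fS_n$ does hold) and then pull back; this is effectively what \cite[Lemma~3.8]{superres} does.
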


\begin{proof}
  Let $x \in E$, and let $n-c = \#\{\eta_1^2,\dots,\eta_n^2\}$; let $c'=0$ if all $\eta_1,\dots,\eta_n$ are non-zero in the residue field, and 1 otherwise. By definition of $E$, we have $c+c' \ge 2$. By \cite[Lemma 3.8]{superres}, we have $\tilde{\Delta} \in \fm_x^c$, and by definition, $a_{2n} \in \fm_x^{c'}$. Since $\Delta = 4^n a_{2n} \tilde{\Delta}$, we see that $\Delta \in \fm_x^2$, so $x \in V(\Delta, \partial \Delta)$.
\end{proof}

\begin{proposition} \label{prop:sign-norm-crit2}
Suppose the following conditions hold:
\begin{enumerate}
\item $A$ is normal,
\item $\Delta$ is a non-zerodivisor,
\item $V(\Delta, \partial \Delta)$ has codimension $\ge 2$.
\end{enumerate}
Then $\Delta$ is squarefree and $B$ is normal.
\end{proposition}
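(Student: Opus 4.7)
The plan is to deduce this as a corollary of Proposition~\ref{prop:sign-norm-crit}, so I need to extract the two missing hypotheses from (a)--(c): namely, squarefreeness of $\Delta$ and the codimension bound on $E$. The codimension bound is immediate from Lemma~\ref{lem:E-partialDelta}, since $E \subset V(\Delta, \partial \Delta)$ forces every prime contained in $E$ to have height $\ge 2$.

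The substantive step is therefore to show that hypothesis (c) forces $\Delta$ to be squarefree. I would argue by contradiction: suppose some height one prime $\fp$ of $A$ satisfies $v_\fp(\Delta) \ge 2$. Since $A$ is normal by (a), the localization $A_\fp$ is a DVR, so its maximal ideal $\fm_\fp = \fp A_\fp$ is generated by a uniformizer $\pi$ with $v_\fp(\pi) = 1$. The inequality $v_\fp(\Delta) \ge 2$ then says precisely that the image of $\Delta$ in $A_\fp$ lies in $\fm_\fp^2$, i.e., $\fp \in V(\Delta, \partial \Delta)$. But $\fp$ has height one, contradicting (c). Hence $\Delta$ is squarefree.

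With squarefreeness and $\Delta$ a non-zerodivisor in hand, together with $E$ of codimension $\ge 2$, all three hypotheses of Proposition~\ref{prop:sign-norm-crit} are satisfied, and we conclude that $B$ is normal. I expect no real obstacle here — the work has already been done in setting up $V(\Delta, \partial\Delta)$ as a cleaner geometric proxy for the locus $E$ via Lemma~\ref{lem:E-partialDelta}, and in observing that order-of-vanishing at a height one prime in a normal ring is literally a valuation. The only small subtlety worth flagging is that one uses normality of $A$ in two places: once to invoke Proposition~\ref{prop:sign-norm-crit}, and once (as above) to make sense of $v_\fp(\Delta)$ as a valuation so that the implication $v_\fp(\Delta) \ge 2 \Rightarrow \Delta \in \fm_\fp^2$ holds.
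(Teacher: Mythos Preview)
Your proposal is correct and follows essentially the same route as the paper: reduce to Proposition~\ref{prop:sign-norm-crit} by using Lemma~\ref{lem:E-partialDelta} for the codimension bound on $E$, and verify squarefreeness of $\Delta$ by noting that for a height one prime $\fp$ of the normal ring $A$, the localization $A_\fp$ is a DVR, so $v_\fp(\Delta)\ge 2$ would mean $\Delta\in\fm_\fp^2$ and hence $\fp\in V(\Delta,\partial\Delta)$, contradicting (c). The only cosmetic difference is that the paper phrases the squarefreeness argument directly rather than by contradiction.
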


\begin{proof}
  We apply Proposition~\ref{prop:sign-norm-crit}. The set $E$ there has codimension $\ge 2$ by the present assumption (c) and Lemma~\ref{lem:E-partialDelta}. It thus suffices to prove that $\Delta$ is squarefree.  Let $\fp$ be a height one prime of $A$ so $A_\fp$ is a DVR by (a). By (c), $\fp \notin V(\Delta, \partial \Delta)$, so that $\Delta \notin \fp^2 A_\fp$. In particular, this means $v_\fp(\Delta) \le 1$.
\end{proof}

\subsection{Signed factorization rings}

let $f=\sum_{i=0}^n a_{2n-2i} u^{2i}$ be a monic polynomial over a ring $A$. Let $p$ and $q$ be non-negative integers such that $p+q=n$, and put $g=\sum_{i=0}^p b_{p-i} u^i$ and $h=\sum_{i=0}^q c_{2q-2i} u^{2i}$, where $b_0=c_0=1$ and the remaining $b_i$ and $c_i$ are formal symbols. We define the \defn{signed $(p,q)$-factorization ring} of $f$, denoted $\SFact^{p,q}_A(f)$ to be $A[b_1, \ldots, b_p, c_2, \ldots, c_{2q}]/I$, where $I$ is the ideal generated by equating the coefficients of
\[
  f(u)=g(u) g(-u) h(u).
\]
If $A$ is graded and $a_i$ is homogeneous of degree $2i$ then $\SFact^{p,q}_A(f)$ is graded and $\deg(b_i)=i$ and $\deg(c_{2i})=2i$. Formation of the signed factorization ring is compatible with base change, as with the signed splitting ring.

In what follows, we let $B=\SSplit_A(f)$ and $C=\SFact^{p,q}_A(f)$.

\begin{proposition} \label{prop:sign-fact}
We have the following:
\begin{enumerate}
\item   We have a natural $A$-algebra isomorphism $B=\Split_C(g) \otimes_C \SSplit_C(h)$.  
\item As an $A$-module, $C$ is free of rank $2^p \binom{n}{p}$.
\item The map $A \to C$ is syntomic.
\item If $A$ satisfies Serre's condition $(S_k)$, then so does $C$. In particular, if $A$ is Cohen--Macaulay, then so is $C$.
\item If $B$ is reduced (resp., integral, normal) then so is $C$.
\end{enumerate}
\end{proposition}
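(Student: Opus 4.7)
My approach is to parallel the proof of Proposition~4.1 of \cite{superres} (the analogous result for unsigned factorization rings), substituting signed splitting rings for the factor associated with $h$. Parts (b)--(e) will then follow from (a) via Proposition~\ref{prop:sign-split}, Proposition~3.1 of \cite{superres}, and faithfully flat descent.

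For (a), I would construct mutually inverse $A$-algebra maps between $B$ and $D := \Split_C(g) \otimes_C \SSplit_C(h)$. In $D$, one has $g(u) = \prod_{i=1}^p (u - \xi_i)$ and $h(u) = \prod_{j=1}^q (u^2 - \eta_j^2)$; together with the defining factorization $f = g(u)g(-u)h(u)$ of $C$ and the identity $g(u)g(-u) = (-1)^p \prod_i (u^2 - \xi_i^2)$, this exhibits a signed splitting of $f$ over $D$, giving $B \to D$ by the universal property of $B$. Conversely, inside $B$ I would set $\xi_i := \eta_i$ for $i \le p$, define $g(u) := \prod_{i \le p} (u - \xi_i)$ and $h(u) := \prod_{i > p}(u^2 - \eta_i^2)$, verify that these realize the factorization defining $C$, and extend the resulting map $C \to B$ to $D \to B$ using the splittings of $g$ and $h$ already visible in $B$. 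Mutual inverseness is then checked on generators.

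For (b), combining (a) with Proposition~\ref{prop:sign-split}(a) and Proposition~3.1 of \cite{superres} yields that $B$ is free of rank $p! \cdot 2^q q!$ over $C$; since $B$ is also free of rank $2^n n!$ over $A$, faithful flatness of $B$ over $C$ combined with finite generation of $C$ as an $A$-module (from its explicit presentation) forces $C$ to be free over $A$ of the quotient rank $2^p \binom{n}{p}$. For (c), flatness and finite presentation of $A \to C$ come from (b), and the LCI-fiber condition follows either from the complete-intersection presentation of $C$ (verifying the regular-sequence property universally) or by descent of syntomicity along the faithfully flat map $C \to B$. Part (d) is standard: a flat map with Cohen--Macaulay (in particular LCI) fibers preserves Serre's $(S_k)$. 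For (e), reducedness, integrality, and normality each descend from $B$ to $C$ along the faithfully flat inclusion $C \hookrightarrow B$, using the standard arguments (essentially those underlying Proposition~4.1(e) of \cite{superres}).

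The main obstacle is the sign tracking in (a): the identity $g(u)g(-u) = (-1)^p \prod(u^2 - \xi_i^2)$ introduces a sign discrepancy that is absorbed by the leading-coefficient relation $1 = (-1)^p$ implicit in the defining ideal of $C$, and this has to be carried consistently through both the forward and reverse constructions. Once (a) is in place, the remaining parts are routine bootstrapping from previously established results.
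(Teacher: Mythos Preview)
Your overall strategy matches the paper's: part~(a) is proved by constructing mutually inverse $A$-algebra maps, and parts~(c)--(e) are deduced from (a) together with the known properties of $\Split$ and $\SSplit$. The paper's argument for (e) in the normal case uses the existence of a $C$-linear splitting of $C\hookrightarrow B$ rather than faithfully flat descent, but your descent argument is also valid.

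There is, however, a genuine gap in your argument for (b). From (a) you correctly obtain an $A$-module isomorphism $C^{\oplus p!\,2^q q!}\cong A^{\oplus 2^n n!}$, but this only shows that $C$ is \emph{projective} of constant rank $2^p\binom{n}{p}$ over $A$; it does not force freeness for an arbitrary base ring $A$. The paper closes this gap by passing to the universal case $A^{\univ}=\bZ[a_2,\dots,a_{2n}]$: there $C^{\univ}$ is a graded projective module over a graded polynomial ring, hence free, and freeness over a general $A$ then follows by base change $C=A\otimes_{A^{\univ}}C^{\univ}$. You should insert this step.

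Separately, your handling of the sign in (a) is not right. You say the discrepancy $g(u)g(-u)=(-1)^p\prod_i(u^2-\xi_i^2)$ is ``absorbed by the leading-coefficient relation $1=(-1)^p$ implicit in the defining ideal of $C$.'' But if that relation were in the ideal, then for odd $p$ one would have $2=0$ in $C$, so $C=0$ over any ring in which $2$ is invertible, contradicting (b). The sign is not absorbed; rather, the defining relation should be read as $f(u)=(-1)^p g(u)g(-u)h(u)$ (equivalently, one equates only the non-leading coefficients, or replaces $g(-u)$ by $(-1)^p g(-u)$, which is the Chern polynomial of $\cR^*$ in the geometric picture). Once this is done the construction of the two maps in (a) goes through without any residual sign.
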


\begin{proof}
(a)  Let $\eta'_1, \ldots, \eta'_p$ be the generators of $\Split_C(g)$ and $\eta'_{p+1}, \ldots, \eta'_{p+q}$ those for $\SSplit_C(h)$. Put $B'=\Split_C(g) \otimes_C \SSplit_C(h)$. Since $f(u) = \prod_{i=1}^n (u^2-{\eta'_i}^2)$ holds over $B'$, we have an $A$-algebra homomorphism $\phi \colon B \to B'$ given by $\phi(\eta_i)=\eta'_i$. Let $g^*(u)=\prod_{i=1}^p(u-\eta_i)$ and $h^*(u)=\prod_{i=p+1}^n(u^2-\eta^2_i)$ be polynomials in $B[u]$. The factorization
  \[
    f(u)=g^*(u) g^*(-u) h^*(u)
  \]
  gives an $A$-algebra homomorphism $C \to B$ mapping $g(u)$ to $g^*(u)$ and $h(u)$ to $h^*(u)$. The tautological splittings of $g^*(u)$ and $h^*(u)$ over $B$ yield an $A$-algebra homomorphism $\psi \colon B' \to D$ given by $\psi(\eta'_i)=\eta_i$. Since $\phi$ and $\psi$ are clearly inverses, the result follows.

  (b) By Proposition~\ref{prop:sign-split}(a), we have an $A$-module isomorphism $B \cong A^{\oplus 2^n n!}$ and $C$-module isomorphisms $\Split_C(g) \cong C^{\oplus p!}$ and $\SSplit_C(h) \cong C^{\oplus 2^q q!}$. Comparing with (a), we obtain an $A$-module isomorphism $C^{\oplus 2^q p! q!} \cong A^{\oplus 2^n n!}$. It follows that $C$ is projective as an $A$-module of constant rank $2^p \binom{n}{p}$. To finish, we consider the universal case: let
  \[
    A^\univ = \bZ[a_2,\dots,a_{2n}], \qquad f^\univ(u) = u^{2n} + \sum_{i=0}^{n-1} a_{2(n-i)} u^{2i}, \qquad C^\univ = \SFact^{p,q}_{A^\univ}(f^\univ).
  \]
  Applying the previous discussion, $C^\univ$ is a projective graded $A^\univ$-module, and hence must be free. By base change, we see that $C$ must be free over $A$.

(c) Let $\fp$ be a prime of $A$. Then $C \otimes_A \kappa(\fp)$ is finite over $\kappa(\fp)$ by (b), and therefore of Krull dimension~0. This ring is a quotient of $\kappa(\fp)[b_1,\dots,b_p,c_2,\dots,c_{2q}]$ by $p+q$ relations, and is therefore a complete intersection. Thus $A \to C$ is syntomic.

(d) This follows since the property is syntomic local.

(e) From (a), $C$ is isomorphic to a subring of $B$, which handles the reduced and integral conditions. For the normality condition, we use that $B$ is a splitting ring over a signed splitting ring over $C$ by (a), and hence the inclusion $C \to B$ admits a $C$-linear splitting by Proposition~\ref{prop:sign-split}(f) and \cite[Proposition 3.1(f)]{superres}.
\end{proof}

Just like the signed splitting rings, we have a geometric source of signed factorization rings (see \cite[Theorem 8.2]{GSS}). Let $X$ be a smooth variety over an algebraically closed field and let $\cE$ be a rank $2n$ vector bundle on $X$ equipped with a symplectic form $\bigwedge^2 \cE \to \cO_X$. Let $A$ be the Chow ring of $X$ and let $a_{2i} = c_{2i}(\cE)$, where $c_{2i}(\cE)$ is the $2i$th Chern class of $\cE$. Then the signed $(p,q)$-factorization ring $C$ is the Chow ring of the relative Grassmannian $\bI\Gr_p(\cE)$ of rank $p$ isotropic subbundles of $\cE$. Furthermore, on $\bI\Gr_p(\cE)$, the pullback of $\cE$ has a rank $p$ isotropic subbundle $\cR$; we have a filtration $\cR \subset \cR^\perp \subset \cE$ where $\cR^\perp$ is the orthogonal complement of $\cR$ in $\cE$ with respect to the symplectic form. Then $g(u)$ is the Chern polynomial of $\cR$ and $h(u)$ is the Chern polynomial of $\cR^\perp / \cR$.

An important case for us is when $X = \Spec(\bC)$ and $\cE=\bC^{2n}$, so that $f=u^{2n}$. In that case, this discussion gives the following result (we note that the Chow ring and singular cohomology ring of $\bI\Gr_p(\bC^{2n})$ are isomorphic since it has a cellular decomposition).

\begin{proposition} \label{prop:sign-fact-grass}
Suppose that $f=u^{2n}$. Regard $A$ as graded and concentrated in degree~$0$, and $B$ as graded with each $\eta_i$ of degree~$2$. Then we have a natural isomorphism of graded rings
\begin{displaymath}
\SFact^{p,q}_A(f) = A \otimes \rH^*_{\sing}(\bI\Gr_p(\bC^{2n}), \bZ).
\end{displaymath}
\end{proposition}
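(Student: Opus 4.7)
The plan is to reduce the claim to the content of [GSS, Theorem 8.2] via base change, in direct parallel with Proposition~\ref{prop:split-flag}.

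First I would handle the universal case $A = \bZ$. Take $X = \Spec(\bC)$ and $\cE = \bC^{2n}$ equipped with the standard symplectic form. Since $\cE$ is trivial we have $c_{2i}(\cE) = 0$ for all $i \ge 1$, so the polynomial built from its even Chern classes is precisely $u^{2n}$. Applying [GSS, Theorem 8.2] (the ``geometric source'' discussion preceding the proposition) then furnishes a natural isomorphism of graded rings
\[
\SFact^{p,q}_{\bZ}(u^{2n}) \;\cong\; \CH^*(\bI\Gr_p(\bC^{2n})),
\]
where on the left $b_i$ is in Chow degree $i$ and $c_{2i}$ is in Chow degree $2i$, matching the conventions set up in the definition of $\SFact$.

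Next I would pass from Chow to singular cohomology. As noted in the paragraph preceding the proposition, the isotropic Grassmannian $\bI\Gr_p(\bC^{2n})$ admits a Schubert cell decomposition, so the cycle class map induces a graded ring isomorphism $\CH^*(\bI\Gr_p(\bC^{2n})) \cong \rH^{2*}_{\sing}(\bI\Gr_p(\bC^{2n}), \bZ)$ with the usual doubling of degree. Composing the two isomorphisms proves the proposition when $A = \bZ$, with the generators $b_i$ and $c_{2i}$ in cohomological degrees $2i$ and $4i$ respectively.

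To extend to arbitrary $A$, I would invoke the base change compatibility of $\SFact$ recorded just after its definition: since $f = u^{2n}$ is pulled back from $\bZ$, we have $\SFact^{p,q}_A(u^{2n}) \cong A \otimes_{\bZ} \SFact^{p,q}_{\bZ}(u^{2n})$, and tensoring the isomorphism of the previous step with $A$ (sitting in degree zero) produces the desired identification.

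Essentially all the content is packed into [GSS, Theorem 8.2]; once that is invoked, what remains is formal. The main obstacle is therefore purely bookkeeping, namely the factor-of-two grading conversion between Chow codimension and cohomological degree, and confirming that the generators of $\SFact^{p,q}$ land in the degrees asserted by the proposition.
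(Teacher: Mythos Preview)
Your proposal is correct and follows exactly the approach the paper intends: the proposition is stated without a separate proof environment because it is meant to follow from the preceding paragraph, which invokes [GSS, Theorem~8.2] in the special case $X=\Spec(\bC)$, $\cE=\bC^{2n}$, identifies Chow with singular cohomology via the cellular decomposition, and then the general $A$ follows by base change. Your write-up simply makes these steps explicit.
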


\begin{remark}
  One can also form partial signed splitting rings which are intermediate between $B$ and $C$, and all of the above properties generalize. In that case, we get an isomorphism with $A$ tensored with the cohomology ring of the corresponding isotropic partial flag variety.
\end{remark}

\section{Type D splitting rings} \label{sec:typeD}

We will also need a slight variation of the signed splitting ring which takes into account cases in which the constant term of $f$ is already a square in the base ring $A$. Roughly speaking, our main example will come from the fact that the determinant of a skew-symmetric matrix is the square of its Pfaffian, but that will be discussed later. While many things will be similar, it is not clear to us how to deduce the main results from what we have already shown, so we will have to redo some proofs from \cite{superres} with the appropriate modifications. We call them ``type D'' to be consistent with the Coxeter groups of type D (in the sense of Dynkin diagrams).

\subsection{Type D splitting rings}

The setup is as before: $A$ is a ring and $f=\sum_{i=0}^n a_{2n-2i} u^{2i}$ is a monic polynomial in $u^2$ with coefficients in $A$ (so $a_0=1$). Additionally, suppose we have an element $\alpha \in A$ such that $\alpha^2 = a_{2n}$.

We define the \defn{type D splitting ring} of $f$, denoted $\DSplit_A(f, \alpha)$, to be the quotient $A[\eta_1, \ldots, \eta_n] / I$, where $I$ is the ideal generated by equating the coefficients of $f(u)=\prod_{i=1}^n (u^2-\eta_i^2)$ {\it and} the equation $\eta_1\cdots \eta_n = \alpha$. Explicitly, $I$ is generated by
\[
  a_{2i} - (-1)^ie_i(\eta_1^2, \ldots, \eta_n^2), \quad (\text{for } i=1,\dots,n-1), \qquad \eta_1\cdots \eta_n = \alpha.
\]
where $e_i$ is the $i$th elementary symmetric polynomial.

If $A$ is graded and $a_{2i}$ has degree $2i$, and $\deg(\alpha)=n$, then $\DSplit_A(f, \alpha)$ is graded with $\eta_i$ of degree~2. Consider the homomorphism $\fS_n \ltimes (\bZ/2)^n \to \bZ/2$ given by $(\sigma, z_1,\dots,z_n) \mapsto z_1+\cdots+z_n$ and let $\fW_n$, the demihyperoctahedral group, be its kernel. Then $\fW_n$ acts $A$-linearly on $\DSplit_A(f, \alpha)$, with $\fS_n$ permuting the $\eta_i$'s and the $i$th copy of $\bZ/2$ acting by $\pm 1$ on $\eta_i$.

Formation of the signed splitting ring is compatible with base change: if $A \to A'$ is a homomorphism,  $\alpha'$ the image of $\alpha$, and $f'$ is the image of $f$ under $A[u] \to A'[u]$, then we have a natural isomorphism
\[
  \DSplit_{A'}(f', \alpha')=A' \otimes_A \DSplit_A(f, \alpha).
\]

\begin{remark}
  The symmetric and (demi)hyperoctahedral groups are special cases of the complex reflection groups $G(m,p,n)$; in this context, there is a general construction that encompasses both the signed splitting ring and the type D splitting ring. Namely, pick positive integers $n,p,m$ such that $p$ divides $m$. Given a ring $A$ and a monic polynomial $f(u) = u^{mn} + \sum_{i=0}^{n-1} a_{m(n-i)} u^{mi}$ and an element $\alpha \in A$ such that $\alpha^p = a_{mn}$, we define the generalized splitting ring to be the quotient of $A[\eta_1,\dots,\eta_n]$ by the ideal $I$ generated by
  \[
    a_{mi} - (-1)^i e_i(\eta_1^m, \dots, \eta_n^m), \qquad (\text{for $i=1,\dots,n-1$}), \qquad (\eta_1\cdots \eta_n)^{m/p} = \alpha.
  \]
  Then the splitting ring of \cite{superres} corresponds to the case $m=p=1$, the signed splitting ring corresponds to $m=2$ and $p=1$, and the type D splitting ring corresponds to $m=p=2$. Analogues of the results that we have proven should generalize in a straightforward way, but we leave the details to the interested reader.

  However, it would be of great interest if the geometric applications of signed and type D splitting rings that we establish later in this paper have analogues for this more general class of rings.
\end{remark}

\subsection{The universal case}

Let $A^{\univ}=\bZ[a_2, \ldots, a_{2n-2}, \tilde{\alpha}]$, and define
\[
  f^{\univ}(u) = u^{2n}+\sum_{i=1}^{n-1} a_{2n-2i} u^{2i} + \tilde{\alpha}^2 \in A^\univ[u],\qquad  B^{\univ} = \DSplit_{A^{\univ}}(f^{\univ}, \tilde{\alpha}).
\]
The map $\bZ[\eta_1, \ldots, \eta_n] \to B^{\univ}$ is surjective, and has no kernel since $B^{\univ} \otimes \bC$ clearly has Krull dimension $n$. Thus we have
\[
  B^{\univ}=\bZ[\eta_1, \ldots, \eta_n].
\]

\begin{proposition} \label{prop:univ-free}
  As an $A^\univ$-module, $B^\univ$ is free of rank $2^{n-1} n!$.
\end{proposition}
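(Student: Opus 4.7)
The plan is to show $A^\univ \to B^\univ$ is module-finite and flat by invoking the miracle flatness theorem, compute the rank at the generic fiber via the invariant theory of $\fW_n$, and then upgrade projective to free using the graded structure. Both $A^\univ$ and $B^\univ$ are polynomial rings over $\bZ$ of Krull dimension $n+1$; in particular $A^\univ$ is regular and $B^\univ$ is Cohen--Macaulay.

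For module-finiteness, observe the identity in $B^\univ[v]$:
\[
\prod_{j=1}^n(v-\eta_j^2) = v^n + a_2 v^{n-1} + \cdots + a_{2n-2}\, v + (-1)^n \tilde\alpha^2,
\]
which holds because for $k<n$ the coefficient $(-1)^k e_k(\eta^2)$ equals $a_{2k}$ by the defining relations of $B^\univ$, while the constant term is $(-1)^n(\eta_1\cdots\eta_n)^2 = (-1)^n \tilde\alpha^2$. Thus each $\eta_i$ is integral over $A^\univ$, making $B^\univ$ a finite $A^\univ$-module. For flatness, $A^\univ \to B^\univ$ is an integral extension of Noetherian domains with $A^\univ$ normal, so Cohen--Seidenberg yields Going Down; together with incomparability and the $0$-dimensionality of fibers, this forces any prime $\fq$ of $B^\univ$ to have the same height as $\fp = \fq \cap A^\univ$, verifying the hypotheses of the miracle flatness theorem (Matsumura Theorem 23.1). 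Hence $A^\univ \to B^\univ$ is flat.

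Since $B^\univ$ is finite and flat, it is projective, and because $\Spec A^\univ$ is connected its rank is constant; this rank can be computed at the generic fiber over $\bQ$. There the inclusion $A^\univ \otimes \bQ \hookrightarrow B^\univ \otimes \bQ$ identifies with the classical inclusion $\bQ[\eta_1,\ldots,\eta_n]^{\fW_n} \hookrightarrow \bQ[\eta_1,\ldots,\eta_n]$, whose fundamental invariants $e_1(\eta^2),\ldots,e_{n-1}(\eta^2)$ and $\eta_1\cdots\eta_n$ are precisely the images of the generators of $A^\univ \otimes \bQ$. By Chevalley--Shephard--Todd, the rank is $|\fW_n| = 2^{n-1}n!$. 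Finally, since $A^\univ$ is a connected positively graded polynomial ring over $\bZ$, a standard graded Nakayama argument upgrades projective of known rank to free: lift a homogeneous $\bZ$-basis of $B^\univ/(A^\univ_{>0})B^\univ$ to $B^\univ$; the lifts generate by graded Nakayama, and projectivity together with the rank computation forces them to be independent. (Alternatively, one can appeal directly to Quillen--Suslin since $A^\univ$ is a polynomial ring over a PID.) The main obstacle is the verification of miracle flatness, handled by Cohen--Seidenberg Going Down applied to the integral extension $A^\univ \hookrightarrow B^\univ$ with $A^\univ$ normal.
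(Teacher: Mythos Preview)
Your proof is correct and follows essentially the same architecture as the paper: finiteness via integrality of the $\eta_i$, flatness via miracle flatness (the paper cites \stacks{00R4} directly, noting that a finite map between polynomial rings over $\bZ$ of the same dimension is automatically flat; your Going-Down argument spells out the height equality that underlies this), then projective implies free via the graded structure (the paper simply asserts ``any projective graded $A^\univ$-module is free''; your graded Nakayama / Quillen--Suslin alternatives both justify this).

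The one substantive difference is the rank computation. The paper computes the rank at the \emph{closed} fiber $(a_2,\dots,a_{2n-2},\tilde\alpha)=0$: the quotient $\bQ[\eta_1,\dots,\eta_n]/(e_1(\eta^2),\dots,e_{n-1}(\eta^2),\eta_1\cdots\eta_n)$ is a graded complete intersection, so its dimension is the product of the generator degrees $2\cdot 4\cdots (2n-2)\cdot n = 2^{n-1}n!$. You instead compute at the \emph{generic} fiber, identifying $A^\univ\otimes\bQ\hookrightarrow B^\univ\otimes\bQ$ with the inclusion of $\fW_n$-invariants and invoking Chevalley--Shephard--Todd to get rank $|\fW_n|=2^{n-1}n!$. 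Your route is more conceptual and makes the role of the type~D Weyl group explicit; the paper's route is more elementary and self-contained. Both give the same number, of course, and the equality of these two computations is itself a shadow of the classical fact that for a reflection group the product of the invariant degrees equals the group order.
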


\begin{proof}
  The map $A^\univ \to B^\univ$ is finite, as each $\eta_i$ is a root of $f^{\univ}$. Since $A^\univ \to B^\univ$ is a finite map of polynomial rings of the same dimension, it is flat \stacks{00R4}. Therefore $B^\univ$ is projective as an $A^\univ$-module, and thus free, as any projective graded $A^\univ$-module is free. The rank can be computed over the fiber of the ideal $(a_2,\dots,a_{2n-2},\alpha)$; in this case we have the ring $\bQ[\eta_1,\dots,\eta_n] / I$ where $I$ is the ideal generated by $e_i(\eta_1^2,\dots,\eta_n^2)$ for $i=1,\dots,n-1$ and $\eta_1\cdots\eta_n$. This ideal is a graded complete intersection, and hence the rank follows by taking the product of the degrees of its minimal generators.
\end{proof}

Given our original setup, there is unique ring homomorphism $A^{\univ} \to A$ such that $f$ and $\alpha$ are the images of $f^\univ$ and $\tilde{\alpha}$, respectively. Since formation of type D splitting rings is compatible with base change, we have $B=A \otimes_{A^{\univ}} B^{\univ}$.

\subsection{Basic results}

In what follows, we let $A$ be a noetherian ring and put $B=\DSplit_A(f, \alpha)$. Let $\tilde{\Delta}$ be the discriminant of $\sum_{i=0}^n a_{2n-2i} v^i$. We define the \defn{reduced discriminant} of the pair $(f,\alpha)$ by
\[
  \ol{\Delta} = \alpha \tilde{\Delta},
\]
which is an element of $A$. In $B$, we have the formula
\[
  \ol{\Delta} = 4^n \eta_1 \cdots \eta_n \prod_{i < j} (\eta_i^2 - \eta_j^2)^2.
\]
Note that $\alpha \ol{\Delta}$ is the usual discriminant of $f$.

\begin{proposition} \label{prop:d-sign-split}
We have the following:
\begin{enumerate}
\item As an $A$-module, $B$ is free of rank $2^{n-1} n!$.
\item The map $A \to B$ is syntomic.
\item If $A$ satisfies Serre's condition $(S_k)$, then so does $B$. In particular, if $A$ is Cohen--Macaulay, then so is $B$.
\item If $\ol{\Delta}$ is a unit of $A$ then $A \to B$ is \'etale.
\item If $A$ is reduced and $\ol{\Delta}$ is a non-zerodivisor then $B$ is reduced.
\end{enumerate}
\end{proposition}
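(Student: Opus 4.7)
Part (a) follows directly from Proposition~\ref{prop:univ-free}: since the formation of $\DSplit$ commutes with base change, $B = A \otimes_{A^\univ} B^\univ$, and the $A^\univ$-module $B^\univ$ is free of rank $2^{n-1}n!$. For (b), flatness is then automatic, the presentation of $B$ is visibly finite, and each fiber $B \otimes_A \kappa(\fp)$ is a zero-dimensional $\kappa(\fp)$-algebra (finite-dimensional by (a)) defined by $n$ equations in $n$ variables, hence a local complete intersection. Part (c) is then the general fact that a syntomic map (flat with Cohen--Macaulay fibers) preserves Serre's condition~$(S_k)$.

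For (d), the plan is to establish the identity
\[
  4\alpha\,(\det J)^2 = \ol{\Delta}
\]
in $B$, where $J = (\partial R_i/\partial \eta_j)_{i,j}$ is the Jacobian of the defining relations
\[
  R_i = a_{2i} - (-1)^i e_i(\eta_1^2,\ldots,\eta_n^2) \quad (1 \le i \le n-1), \qquad R_n = \eta_1\cdots\eta_n - \alpha.
\]
I would verify this by right-multiplying $J$ by $\operatorname{diag}(\eta_1,\ldots,\eta_n)$: the bottom row becomes the constant row $(\alpha,\ldots,\alpha)$, while each top entry takes the form $\pm 2\,\xi_j e_{i-1}(\xi_{\hat j})$ with $\xi_j = \eta_j^2$. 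Using $\xi_j e_{i-1}(\xi_{\hat j}) = e_i(\xi) - e_i(\xi_{\hat j})$ to clear the $e_i(\xi)$ contributions via row operations against the constant row, a unipotent lower-triangular change of basis reduces the remaining block to the Vandermonde matrix of the $\xi_j$. Taking determinants gives $\alpha \det J = \pm 2^{n-1}\alpha\prod_{i<j}(\xi_j - \xi_i)$; since $B^\univ = \bZ[\eta_1,\ldots,\eta_n]$ is a domain with $\alpha \ne 0$, cancellation yields $\det J = \pm 2^{n-1}\prod_{i<j}(\eta_j^2 - \eta_i^2)$, and squaring matches against the formula $\ol{\Delta} = 4^n \alpha \prod_{i<j}(\eta_i^2 - \eta_j^2)^2$. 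When $\ol{\Delta}$ is a unit in $A$ (hence in $B$), the identity forces $\det J \in B^\times$, so the Jacobian presentation $\Omega^1_{B/A} = \operatorname{coker}(J\colon B^n \to B^n)$ vanishes, showing $A \to B$ is unramified; combined with (a) and (b), this gives étale.

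Part (e) is then formal: flatness forces $\ol{\Delta}$ to remain a non-zerodivisor on $B$, so $B \hookrightarrow B[1/\ol{\Delta}]$; the latter is étale over the reduced ring $A[1/\ol{\Delta}]$ by (d), hence reduced, which transfers back to $B$.

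The genuine obstacle is the Jacobian computation for (d): the Pfaffian-type relation $\eta_1\cdots\eta_n = \alpha$ replaces the usual $i=n$ symmetric-function relation, so the Jacobian is no longer directly proportional to the square root of the full discriminant, as it was in the signed splitting case. The right-multiplication by $\operatorname{diag}(\eta_j)$ is precisely the device that absorbs the extra factor of $\alpha$ and explains why the \emph{reduced} discriminant $\ol{\Delta} = \alpha \tilde{\Delta}$, rather than $\tilde{\Delta}$ alone, governs the étale locus.
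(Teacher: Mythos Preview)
Your proof is correct, and parts (a), (b), (c), (e) match the paper's arguments essentially verbatim. The only real divergence is in (d). The paper bypasses the Jacobian matrix entirely: since $f(\eta_i)=0$ holds in $B$ (the relation $e_n(\eta^2)=a_{2n}$ being recovered from $\eta_1\cdots\eta_n=\alpha$ and $\alpha^2=a_{2n}$), differentiating gives $f'(\eta_i)\,d\eta_i=0$ in $\Omega^1_{B/A}$, and $f'(\eta_i)=2\eta_i\prod_{k\ne i}(\eta_i^2-\eta_k^2)$ visibly divides $\ol{\Delta}$, hence is a unit, so each $d\eta_i=0$. This is shorter and avoids any determinant manipulation.

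Your route, by contrast, yields more: the explicit identity $\det J=\pm 2^{n-1}\prod_{i<j}(\eta_i^2-\eta_j^2)$ (equivalently $4\alpha(\det J)^2=\ol{\Delta}$) pins down the Jacobian ideal and not merely its vanishing locus, and your diagonal-multiplication trick makes transparent why it is $\ol{\Delta}=\alpha\tilde{\Delta}$ rather than $\tilde{\Delta}$ that controls the \'etale locus in the type~D setting. One small point worth making explicit in your write-up: the row operations against the constant row $(\alpha,\ldots,\alpha)$ require dividing by $\alpha$, so the determinant calculation is carried out in $\operatorname{Frac}(B^{\univ})$; you then use that both sides of $\alpha\det J=\pm 2^{n-1}\alpha\prod_{i<j}(\eta_i^2-\eta_j^2)$ lie in the domain $B^{\univ}$ to cancel $\alpha$. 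You do say this, but it is the one step a reader might stumble on.
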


\begin{proof}
  (a) This follows from Proposition~\ref{prop:univ-free} and base change.
  
(b) Suppose that $\fp$ is a prime of $A$. Then $B \otimes_A \kappa(\fp)$ is finite over $\kappa(\fp)$ by~(a), and therefore of Krull dimension~0. This ring is a quotient of $\kappa(\fp)[\eta_1,\dots,\eta_n]$ by $n$ relations, and is therefore a complete intersection. It follows that $A \to B$ is syntomic.

(c) Since $(S_k)$ is syntomic local \stacks{036A}, the result follows from (b).

(d) We have $0=f(\eta_i)$ and so $0=f'(\eta_i) d\eta_i$. However, $f'(\eta_i)=2 \eta_i \prod_{j \ne i} (\eta_i-\eta_j)$ divides $\ol{\Delta}$ and is therefore a unit. Thus $d\eta_i=0$. We conclude that $\Omega_{B/A}=0$. Since $B$ is finite flat over $A$ by (a), it is therefore \'etale.

(e) Since $A$ is reduced, it satisfies $(R_0)$ and $(S_1)$ \stacks{031R}. Thus $B$ satisfies $(S_1)$ by part~(c). Since $V(\ol{\Delta}) \subset \Spec(A)$ has codimension~1 and $A[1/\ol{\Delta}] \to B[1/\ol{\Delta}]$ is \'etale, it follows that $B$ satisfies $(R_0)$. Thus $B$ is reduced.
\end{proof}

\begin{proposition} \label{prop:d-split-rep}
Suppose that $2$ and $n!$ are invertible in $A$. Then $B$ is free of rank one as an $A[\fW_n]$-module.
\end{proposition}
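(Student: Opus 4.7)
The plan is to follow the strategy of \cite[Proposition 3.2]{superres}: reduce to a universal case and then recognize the statement as an instance of the Chevalley--Shephard--Todd theorem applied to the reflection group $\fW_n$ of type $D_n$. Since formation of $\DSplit_A(f,\alpha)$ is compatible with base change, and since being free of rank one as an $A[\fW_n]$-module is preserved under base change, it suffices to treat the universal case. Put $R = \bZ[\tfrac{1}{2n!}]$ and work with $A^\univ_0 = R[a_2, \ldots, a_{2n-2}, \tilde\alpha]$ and $B^\univ_0 = R[\eta_1, \ldots, \eta_n]$, the latter equipped with its natural $\fW_n$-action by permutations and even sign changes of the $\eta_i$'s.

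The next step is to verify that the structure map identifies $A^\univ_0$ with the invariant subring $(B^\univ_0)^{\fW_n}$. Classical invariant theory for the Weyl group of type $D_n$ (valid over $R$ since $2$ is invertible) identifies $(B^\univ_0)^{\fW_n}$ with the polynomial ring on $e_1(\eta_1^2,\ldots,\eta_n^2), \ldots, e_{n-1}(\eta_1^2,\ldots,\eta_n^2)$ together with $\eta_1\cdots\eta_n$, and by construction these are exactly the images of $(-1)^i a_{2i}$ and $\tilde\alpha$ under the structure map.

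Finally, I would invoke the following form of the Chevalley--Shephard--Todd theorem: if $G$ is a finite reflection group acting on a polynomial ring $S$ over a commutative ring in which $|G|$ is invertible, then $S \cong S^G \otimes R[G]$ as $S^G[G]$-modules. Applied to $G = \fW_n$ acting on $B^\univ_0$, this yields $B^\univ_0 \cong A^\univ_0[\fW_n]$ as $A^\univ_0[\fW_n]$-modules, completing the argument in the universal case and hence, by base change, in general.

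The delicate point is ensuring that Chevalley--Shephard--Todd holds over the ring $R$ rather than over a field. The standard proof does go through: its essential input is the existence of a Reynolds operator $|G|^{-1}\sum_{g\in G} g$, which is available since $|G| = 2^{n-1}n!$ is invertible in $R$. Alternatively, one may first establish the isomorphism after base change to $\bQ$, and then descend using that both $B^\univ_0$ and $A^\univ_0[\fW_n]$ are graded free $R$-modules whose Hilbert series agree (computable from Proposition~\ref{prop:univ-free} together with a direct Hilbert series calculation on the group ring side), so that a graded $R$-linear isomorphism exists already over $R$.
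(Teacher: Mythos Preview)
Your proposal is correct and follows exactly the approach the paper intends: the paper's proof is the single sentence ``similar to \cite[Proposition 3.2]{superres}'', and what you have written is precisely an unpacking of that reference, specialized to the type $D$ reflection group $\fW_n$.

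One small caveat: your alternative descent argument at the end is not quite complete as stated. Matching Hilbert series of $B^{\univ}_0$ and $A^{\univ}_0[\fW_n]$ as graded free $R$-modules only yields a graded $R$-linear isomorphism, not an $A^{\univ}_0[\fW_n]$-module isomorphism; equivariance does not come for free from a rank count. Your main argument via the Reynolds operator is the one that actually does the work. If you want a clean descent from $\bQ$ to $R=\bZ[\tfrac{1}{2n!}]$, the point is that Weyl groups have all irreducible representations defined over $\bQ$, so $R[\fW_n]$ is a product of matrix algebras over $R$; hence any $R[\fW_n]$-module that is finite free over $R$ is determined up to isomorphism by its rationalization, and the coinvariant algebra and the regular representation agree over $\bQ$ by the classical theorem. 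Combined with graded Nakayama over $A^{\univ}_0$, this gives the desired freeness of rank one.
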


\begin{proof}
  The proof is similar to \cite[Proposition 3.2]{superres}.
\end{proof}

We pause to give a geometric source of splitting rings (see \cite[Theorem 8.2]{GSS}). Let $X$ be a smooth variety over an algebraically closed field and let $\cE$ be a rank $2n$ vector bundle on $X$ equipped with a orthogonal form $\Sym^2 \cE \to \cO_X$, i.e., a symmetric $2$-form which is non-degenerate at each fiber such that the pullback of $\cE$ to its flag bundle has an isotropic subbundle of rank $n$.

Let $A$ be the Chow ring of $X$ with 1/2 adjoined, and let $a_{2i} = c_{2i}(\cE)$, where $c_{2i}(\cE)$ is the $2i$th Chern class of $\cE$ (the existence of the orthogonal form forces the odd Chern classes to vanish). We can construct the type D splitting ring $B$ of the polynomial $f = \sum_{i=0}^n c_{2n-2i}(\cE) u^{2i}$ by taking $\alpha$ to be the Euler class of $\cE$ in the sense of \cite[\S 4]{edidin-graham}.
We let $\bO\Fl(\cE)$ denote the relative orthogonal flag variety, which we will define fiberwise; the formal definition can be given as in \S\ref{ss:B-split}. Over $x \in X$, points of $\bO\Fl(\cE)$ are tuples $(F_1 \subset \cdots \subset F_{n-1}, F_n, F'_n)$ where $F_1 \subset \cdots \subset F_{n-1}$ is a flag of $n-1$  subspaces of $\cE|_x$ as usual, and $F_n,F'_n$ are rank $n$ isotropic subspaces of $\cE|_x$ such that $F_n \cap F'_n = F_{n-1}$. Then $\DSplit_A(f,\alpha)$ is the Chow ring of $\bO\Fl^+(\cE)$ with 1/2 adjoined \cite[\S 6, Theorem 6]{edidin-graham}. 

An important case for us is when $X = \Spec(\bC)$ and $\cE=\bC^{2n}$, so that $f=u^{2n}$ and $\alpha=0$. In that case, this discussion gives the following result (we note that the Chow ring and singular cohomology ring of $\bO\Fl(\bC^{2n})$ are isomorphic since it has a cellular decomposition).

\begin{proposition} \label{prop:d-split-flag}
Suppose that $f=u^{2n}$ and $\alpha=0$ and $1/2 \in A$. Regard $A$ as graded and concentrated in degree~$0$, and $B$ as graded with each $\eta_i$ of degree~$2$. Then we have a natural isomorphism of graded rings
\begin{displaymath}
B = A \otimes_{\bZ[1/2]} \rH^*_{\sing}(\bO\Fl(\bC^{2n}), \bZ[1/2]).
\end{displaymath}
\end{proposition}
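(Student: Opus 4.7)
The plan is to derive this as a specialization of the geometric construction described just above, applied to $X = \Spec(\bC)$ with the trivial orthogonal bundle $\cE = \bC^{2n}$, and then to identify the Chow ring with singular cohomology via a cellular decomposition.

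First, since formation of type D splitting rings is compatible with base change and the target of the isomorphism is a tensor product, it suffices to handle the universal base case $A = \bZ[1/2]$ (provided the target is torsion-free over $\bZ[1/2]$, which is addressed in the final step). For this base ring, I would specialize the geometric description: take $X = \Spec(\bC)$ and $\cE = \bC^{2n}$ equipped with its standard nondegenerate symmetric bilinear form. All positive-degree Chern classes of the trivial bundle vanish, so $f = u^{2n}$; the Euler class also vanishes (it lies in degree $2n$ on a point), so we may take $\alpha = 0$. The Edidin--Graham result cited above then identifies $\DSplit_{\bZ[1/2]}(u^{2n}, 0)$ with the Chow ring $A^*(\bO\Fl(\bC^{2n}))[1/2]$.

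Second, I would invoke the Bruhat decomposition: $\bO\Fl(\bC^{2n})$ is the complete flag variety $G/B$ for $G = \SO(2n,\bC)$ and a Borel $B$, and hence has a stratification by Schubert cells, all of which are affine spaces. As a result, the cycle class map to singular cohomology is an isomorphism onto a torsion-free $\bZ$-module generated by the Schubert classes, with matching ring structure. Combined with the previous step, this yields the desired identification over $\bZ[1/2]$, and base change recovers the general statement.

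The main obstacle is really hidden in the Edidin--Graham step: one must verify that the \emph{linear} relation $\eta_1 \cdots \eta_n = \alpha$ (as opposed to merely $(\eta_1 \cdots \eta_n)^2 = \alpha^2$) is the correct relation imposed on the Chow ring by the orthogonal flag variety structure. This is precisely the content of their construction of the Euler class in the Chow ring of $BO(2n)$ with $1/2$ inverted, which refines the top Chern class into its ``Pfaffian'' square root; taking this as a black box, the remaining steps are routine.
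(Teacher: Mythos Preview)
Your proposal is correct and matches the paper's approach exactly: the paper does not give a separate proof but simply records the proposition as the specialization of the preceding Edidin--Graham identification to $X=\Spec(\bC)$, $\cE=\bC^{2n}$, together with the parenthetical remark that Chow and singular cohomology agree via the cellular (Bruhat) decomposition. Your added observation that base change reduces to $A=\bZ[1/2]$ is a harmless unpacking of what the paper leaves implicit.
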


\subsection{Normality criterion}

We now turn our attention to the question of when $B$ is normal.

Consider the following $n+\binom{n}{2}$ equations on $\Spec(B)$:
\begin{itemize}
\item $\eta_i=0$ for some $i=1,\dots,n$.
\item $\eta_i^2 = \eta_j^2$ for some $i \ne j$.
\end{itemize}
Let $\tilde{E} \subset \Spec(B)$ be the locus where at least two of these equations (not necessarily from different bullet points) vanish, and let $E \subset \Spec(A)$ be the image of $\tilde{E}$. Note that $E$ is closed since $\tilde{E}$ is closed and $A \to B$ is finite.

We will assume that $2$ is invertible in $A$ in this section (this is forced by the assumptions in the following results).

\begin{proposition} \label{prop:d-norm-crit}
Suppose the following conditions hold:
\begin{enumerate}
\item $A$ is normal,
\item $\ol{\Delta}$ is squarefree and a non-zerodivisor,
\item $E$ has codimension $\ge 2$ in $\Spec(A)$.
\end{enumerate}
Then $B$ is normal.
\end{proposition}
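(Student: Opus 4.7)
The plan is to verify Serre's criterion: $B$ is normal if and only if it satisfies $(R_1)$ and $(S_2)$. Property $(S_2)$ is immediate from Proposition~\ref{prop:d-sign-split}(c) together with hypothesis~(a). For $(R_1)$, let $\fq$ be a height-one prime of $B$ with $\fp := \fq \cap A$; since $A \to B$ is finite and flat with zero-dimensional fibers by Proposition~\ref{prop:d-sign-split}(a)--(b), $\fp$ also has height one, so $A_\fp$ is a DVR. If $\ol{\Delta} \notin \fp$, Proposition~\ref{prop:d-sign-split}(d) makes $A_\fp \to B_\fp$ \'etale and $B_\fq$ regular. Thus assume $\ol{\Delta} \in \fp$; squarefree-ness forces $v_\fp(\ol{\Delta}) = 1$, and $\ol{\Delta} = \alpha\tilde{\Delta}$ shows that exactly one of $v_\fp(\alpha)$ or $v_\fp(\tilde{\Delta})$ equals one.

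The plan is to analyze $B_\fq$ via the Jacobian of the defining relations $a_{2j} = (-1)^j e_j(\eta^2)$ (for $j=1,\dots,n-1$) and $\prod_i\eta_i = \alpha$ with respect to $\eta_1,\dots,\eta_n$. A direct Vandermonde-type calculation shows that this Jacobian determinant equals a unit in $A$ times $\prod_{i<j}(\eta_i^2 - \eta_j^2)$, so $A \to B$ is \'etale at $\fq$ precisely when all the $\eta_i^2$ are pairwise distinct in $\kappa(\fq)$. Hypothesis~(c) gives $\fp \notin E$, so at $\fq$ at most one of the equations $\eta_i = 0$ or $\eta_i^2 = \eta_j^2$ (with $i \ne j$) holds. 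If no coincidence $\eta_i^2 = \eta_j^2$ occurs at $\fq$ --- in particular in the subcase where some single $\eta_i$ vanishes while all the squares remain distinct --- the Jacobian is a unit and $B_\fq$ is regular.

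The remaining case is that exactly one coincidence $\eta_i^2 = \eta_j^2 \ne 0$ occurs at $\fq$. Then $\alpha$ is a unit in $A_\fp$, $v_\fp(\tilde{\Delta}) = 1$, and $\tilde{f}$ has exactly one double root $d$ together with $n-2$ simple nonzero roots modulo $\fp$. The plan is to pass to the completion $R := \widehat{A_\fp}$ and apply Hensel's lemma to factor $\tilde{f} = h \cdot g$ over $R$, where $h$ is a monic quadratic lifting $(v-d)^2$. Writing $h = v^2 + bv + c$, the discriminant $b^2 - 4c$ divides $\tilde{\Delta}$ and therefore has $\pi$-valuation exactly one; an explicit computation then shows that $\tilde{B}_1 := R[\xi]/h(\xi)$ is a DVR --- a ramified quadratic extension of $R$ whose maximal ideal is generated by a lift of $\xi - d$. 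Embedding $\tilde{B}_1 \hookrightarrow B \otimes_A R$ via $\xi \mapsto \eta_1^2$, and applying Hensel inside $\tilde{B}_1$ (after a harmless unramified base change of $R$) to extract the square roots of $\xi$ and of the simple roots of $g$, we decompose $B \otimes_A R$ as a product of copies of $\tilde{B}_1$ indexed by the geometric points of the fiber over $\fp$. In particular $B_\fq \cong \tilde{B}_1$ is a DVR, so $B_\fq$ is regular and $(R_1)$ holds.

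The main obstacle is this ramified case: the Jacobian vanishes at $\fq$ and the fiber of $B$ over $\fp$ is no longer reduced there (each geometric point has multiplicity two), so \'etaleness fails and the ramification must be identified explicitly. The key insight is that the entire ramification arises from the single quadratic Hensel factor of $\tilde{f}$, producing a single ramified DVR $\tilde{B}_1$, and that $B \otimes_A R$ is étale over $\tilde{B}_1$ and hence decomposes cleanly into copies of it.
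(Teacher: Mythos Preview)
Your argument is correct, and the overall architecture---Serre's criterion, reduction to height-one primes, and a Hensel/completion analysis of the ramified locus---is the same as the paper's. The genuinely new ingredient is your Jacobian computation: the determinant of the defining relations is indeed $\pm 2^{n-1}\prod_{i<j}(\eta_i^2-\eta_j^2)$, so $A\to B$ is \'etale at $\fq$ whenever the $\eta_i^2$ are pairwise distinct in $\kappa(\fq)$. This cleanly disposes of the case where a single $\eta_i$ vanishes (the paper's Case~1), which the paper instead handles by an explicit factorization $B_\fp\cong A[y]/(y^2-c)$; your route is shorter and avoids a small slip in the paper's version (the relation $\prod\eta_i=\alpha$ already forces $\eta_1\in A$ there, so the extension is in fact trivial, not ramified quadratic). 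In the remaining ramified case your argument and the paper's Case~2 are essentially the same computation packaged differently: you isolate the quadratic Hensel factor $h$ of $\tilde f$ and identify the completed local ring with the ramified DVR $R[\xi]/h(\xi)$, while the paper works with the quartic $q(u^2)$ directly.

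Two small wording issues, neither fatal: ``$b^2-4c$ divides $\tilde\Delta$'' is not quite right---what you need (and what holds) is that they differ by a unit, since $\tilde\Delta=\mathrm{disc}(h)\cdot\mathrm{disc}(g)\cdot\mathrm{Res}(h,g)^2$ with the last two factors units. And the embedding $\tilde B_1\hookrightarrow B\otimes_A R$ via $\xi\mapsto\eta_1^2$ only makes sense after localizing at $\fq$ (globally $\eta_1^2$ satisfies $\tilde f$, not $h$); once localized, the relation $\eta_2=\alpha/(\eta_1\eta_3\cdots\eta_n)$ together with Hensel for $\sqrt{\xi}$ gives $(B\otimes_A R)_\fq=\tilde B_1$ as you claim.
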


\begin{proof}
  First suppose that $A$ is a strictly henselian discrete valuation ring with maximal ideal $\fm$ and residue field $A/\fm = \kappa$. We show that $B$ is regular. If $\ol{\Delta}$ is a unit of $A$ then $B$ is \'etale over $A$ and thus regular. Assume then that $\ol{\Delta}$ is not a unit; by hypothesis (b), it is a uniformizer of $A$. Let $\ol{f}$ be the image of $f$ in $\kappa[u]$. Since $\ol{\Delta}$ maps to~0 in $\kappa$ it follows that $\ol{f}$ has a repeated root; by hypothesis (c), there are two possibilities:
  \begin{itemize}
  \item 0 is a root with multiplicity exactly 2, and the rest of the $2n-2$ roots are distinct, or
  \item there are two repeated roots of the form $x,x,-x,-x$ with $x \ne 0$, and the rest of the roots are nonzero and distinct.
  \end{itemize}

We consider the cases separately.

{\bf Case 1: }
  We have a factorization $\ol{f}(u) = u^2 \cdot \ol{g}(u^2)$ over $\kappa$, where $\ol{g}(u)$ has $n-1$ distinct non-zero roots over the algebraic closure $\ol{\kappa}$.

  Since $\kappa$ is separably closed, it follows that $\ol{g}(u^2) = (u^2-\ol{x}^2_2) \cdots (u^2-\ol{x}_n^2)$ splits completely. By the henselian property, we thus have a factorization
  \[
    f(u)= (u^2 - c) (u^2-x_2^2) \cdots (u^2-x_n^2),
  \]
  where $x_i \in A$ lifts $\ol{x}_i$ for $i \ge 2$ and $c \in \fm$.

Now let $\fp$ be a prime of $B$ above the maximal ideal of $A$, and work in $B_{\fp}$ in what follows. Applying a permutation if necessary, we can assume that $\ol{\eta}_i=\ol{x}_i$ for $i \ge 2$, where $\ol{\eta}_i$ is the image of $\eta_i$ in $B_\fp/\fp \cong \kappa$. For $i \ge 2$, it follows that $\eta_i^2 - c$ and $\eta_i-x_j$, for $j \ne i$, are non-zero in $\kappa$, and thus units of $B_\fp$; since $f(\eta_i)=0$, we conclude that $\eta_i=x_i$. This shows that $B_{\fp}$ is generated as an $A$-algebra by $\eta_1$. We have
\[
  (u^2-c)\prod_{i \ge 2} (u^2-\eta_i^2) = \prod_{i \ge 1} (u^2-\eta_i^2).
\]
Since monic polynomials are non-zerodivisors, it follows that $c = \eta^2_1$ in $B_\fp$, i.e., $B_\fp \cong A[y]/(y^2-c)$. Finally, $c$ divides $\ol{\Delta}$, so is a uniformizer of $A$, and hence we see that $y$ generates the maximal ideal of $B_\fp$, which shows that $B_\fp$ is also a DVR.

\medskip

  {\bf Case 2: }
In this case, we have a factorization $\ol{f}(u) = \ol{q}(u^2) \cdot \ol{g}(u^2)$ over $\kappa$, where $\ol{q}(u)$ is a quadratic polynomial with a non-zero repeated root, and $\ol{g}(u)$ has distinct non-zero roots over the algebraic closure $\ol{\kappa}$ (which are also distinct from the roots of $\ol{q}(u)$).

Since $\kappa$ is separably closed, it follows that $\ol{g}(u^2) = (u^2-\ol{x}_3^2) \cdots (u^2-\ol{x}^2_n)$ splits completely. By the henselian property, we thus have a factorization $f(u)= q(u^2) (u^2-x^2_3) \cdots (u^2-x^2_n)$, where $x_i \in A$ lifts $\ol{x}_i$ for $i \ge 3$ and $q(u)$ is a quadratic polynomial with coefficients in $A$.

  Now let $\fp$ be a prime of $B$ above the maximal ideal of $A$, and work in $B_{\fp}$ in what follows. As in the previous case, and applying a permutation if necessary, we can conclude that $\eta_i = x_i$ for $i\ge 3$. This shows that $B_{\fp}$ is generated as an $A$-algebra by $\eta_1$ and $\eta_2$. We have
\[
  q(u^2) \prod_{i \ge 3} (u^2-\eta_i^2) = \prod_{i \ge 1} (u^2-\eta_i^2).
\]
Since monic polynomials are non-zerodivisors, it follows that $q(u^2) = (u^2-\eta^2_1)(u^2-\eta^2_2)$ in $B_\fp$. Furthermore, $x_i$ is a unit for $i \ge 3$ since its image in $\kappa$ is non-zero; set $\beta = \alpha  x_3^{-1}\cdots x_n^{-1}$. Write $q(u^2) = u^4 - au^2 + \beta^2$. Then
\[
  B_\fp \cong \DSplit_A(q(u^2), \beta) = A[y_1, y_2] / (y_1^2+y_2^2-a, y_1y_2 - \beta) \cong A[u] / (q(u^2)).
\]
To justify the last isomorphism, define a map $A[u] / q(u^2) \to \DSplit_A(q(u^2), \beta)$ by $u \mapsto y_1$. This map is surjective since $-u(u^2-a)/\beta \mapsto y_2$, and hence it is an isomorphism since both are free $A$-modules of rank 4. Next, $u$ is a unit in $B_\fp$ since $-u^2(u^2-a)/\beta^2 = 1$. Hence the maximal ideal of $B_\fp$ is $\fm B_\fp$, and so $B_\fp$ is a DVR.

We now treat the general situation. Let $\fp$ be a height one prime of $A$. We show that $B_{\fp}$ is regular. Let $A_{\fp}^{\rm sh}$ be the strict henselization of the DVR $A_{\fp}$. By the previous paragraphs, we see that $B_{\fp} \otimes_{A_{\fp}} A_{\fp}^{\rm sh}$ is regular. Now, $A_{\fp}^{\rm sh}$ is the direct limit of a family $\{A_i\}$ of rings, each of which is an \'etale cover of $A_{\fp}$. The above arguments apply with $A_i$ in place of $A_{\fp}^{\rm sh}$ for $i$ sufficiently large. We conclude that $B_{\fp} \otimes_{A_{\fp}} A_i$ is regular for some $i$. Since regularity is \'etale local, we conclude that $B_{\fp}$ is regular. Thus $B$ is regular in codimension~1. Finally, since $A$ is normal, it satisfies Serre's condition $(S_2)$, and hence the same is true for $B$ by Proposition~\ref{prop:d-sign-split}(c), so $B$ is therefore normal.
\end{proof}

We now give a variant of Proposition~\ref{prop:d-norm-crit}. For $\ol{\Delta} \in A$, define $V(\ol{\Delta}, \partial \ol{\Delta}) \subset \Spec(A)$ to be the set of points $x \in \Spec(A)$ at which $\ol{\Delta}$ vanishes to order two in the same sense as in \S\ref{ss:b-normal}.

\begin{lemma} \label{lem:d-E-partialDelta}
We have $E \subset V(\ol{\Delta}, \partial \ol{\Delta})$.
\end{lemma}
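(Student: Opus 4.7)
The plan is to adapt the proof of Lemma~\ref{lem:E-partialDelta} essentially verbatim. The only new ingredient is the defining relation $\eta_1 \cdots \eta_n = \alpha$ of the type D splitting ring, which lets $\alpha$ play the role that $a_{2n}$ played in the signed case.

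First I would pick a point $x \in E$ and lift it to a point $\tilde x \in \tilde E$ --- that is, choose a prime $\fq \subset B$ over $\fp = \fm_x$ at which at least two of the $n + \binom{n}{2}$ equations of the bullet list vanish. Define $c = n - \#\{\bar\eta_1^2, \ldots, \bar\eta_n^2\}$ in the residue field $\kappa(\fq)$, and set $c' = 0$ if every $\bar\eta_i$ is nonzero in $\kappa(\fq)$ and $c' = 1$ otherwise. The hypothesis that at least two equations vanish at $\tilde x$ translates, by exactly the same elementary case analysis used in Lemma~\ref{lem:E-partialDelta} (note that the defining equations of $\tilde E$ match those of the signed case), into the inequality $c + c' \ge 2$.

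Next, I would apply \cite[Lemma 3.8]{superres} to the polynomial $\tilde f(v) = \sum a_{2n-2i} v^i$, whose roots in $B$ are the $\eta_i^2$; this yields $\tilde\Delta \in \fm_x^c$. For the $\alpha$-contribution, use the additional defining relation $\alpha = \eta_1 \cdots \eta_n$ in $B$: if $c' = 1$ then some $\bar\eta_i = 0$ in $\kappa(\fq)$, so $\bar\alpha = 0$ there, and since $\alpha \in A$ this forces $\alpha \in \fp A_\fp = \fm_x$. Hence $\alpha \in \fm_x^{c'}$ in either case. Multiplying, $\ol\Delta = \alpha \tilde\Delta \in \fm_x^{c + c'} \subseteq \fm_x^2$, which is exactly the statement $x \in V(\ol\Delta, \partial \ol\Delta)$.

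There is no real obstacle here; the argument is a transparent reworking of the signed case with $\alpha$ replacing $a_{2n}$. The point worth highlighting is that the bookkeeping closes up cleanly precisely because $\alpha$ equals $\eta_1 \cdots \eta_n$ in $B$ rather than their square --- which is exactly in accordance with the fact that the reduced discriminant $\ol\Delta$ has already had one square root of $a_{2n}$ factored out of the genuine discriminant $\Delta = \alpha \ol\Delta$.
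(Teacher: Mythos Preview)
Your proposal is correct and follows exactly the approach the paper intends: the paper's own proof simply reads ``The proof is similar to the proof of Lemma~\ref{lem:E-partialDelta},'' and you have spelled out that adaptation in full, with $\alpha = \eta_1\cdots\eta_n$ replacing $a_{2n}$ so that $\ol\Delta = \alpha\tilde\Delta \in \fm_x^{c+c'} \subseteq \fm_x^2$.
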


\begin{proof}
  The proof is similar to the proof of Lemma~\ref{lem:E-partialDelta}.
\end{proof}

\begin{proposition} \label{prop:d-sign-norm-crit2}
Suppose the following conditions hold:
\begin{enumerate}
\item $A$ is normal,
\item $\ol{\Delta}$ is a non-zerodivisor,
\item $V(\ol{\Delta}, \partial \ol{\Delta})$ has codimension $\ge 2$.
\end{enumerate}
Then $\ol{\Delta}$ is squarefree and $B$ is normal.
\end{proposition}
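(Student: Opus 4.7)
The plan is to mirror the strategy used for Proposition~\ref{prop:sign-norm-crit2}, reducing to Proposition~\ref{prop:d-norm-crit} by verifying its three hypotheses, where only hypothesis~(c) (the codimension bound on~$E$) needs substantive work; the other two are immediate from the present assumptions.

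First I would establish that $\ol{\Delta}$ is squarefree. Let $\fp$ be a height one prime of $A$. By hypothesis~(a), $A_\fp$ is a DVR. By hypothesis~(c), $\fp$ does not lie in $V(\ol{\Delta}, \partial\ol{\Delta})$, which by definition means $\ol{\Delta} \notin \fp^2 A_\fp$. Therefore $v_\fp(\ol{\Delta}) \le 1$, which gives squarefreeness. Note that hypothesis~(b) ensures the valuations are well-defined.

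Next I would verify the hypotheses of Proposition~\ref{prop:d-norm-crit}. Hypothesis~(a) (normality of~$A$) and hypothesis~(b) (that $\ol{\Delta}$ is a non-zerodivisor) are exactly the corresponding assumptions here, with squarefreeness of $\ol{\Delta}$ provided by the previous step. For hypothesis~(c), namely that $E$ has codimension $\ge 2$ in $\Spec(A)$, I would invoke Lemma~\ref{lem:d-E-partialDelta} which gives $E \subset V(\ol{\Delta}, \partial \ol{\Delta})$; combined with the present hypothesis~(c), this yields the required codimension bound on $E$.

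Having checked all hypotheses of Proposition~\ref{prop:d-norm-crit}, I conclude that $B$ is normal. The argument is essentially routine once the type D analogues (Lemma~\ref{lem:d-E-partialDelta} and Proposition~\ref{prop:d-norm-crit}) are in place; there is no genuine obstacle, as the substantive geometric and commutative-algebra content was already absorbed into those earlier results (in particular the strict henselization argument in Proposition~\ref{prop:d-norm-crit} handled the two possible degeneration patterns of $\ol{f}$ modulo a uniformizer).
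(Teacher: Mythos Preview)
Your proposal is correct and follows essentially the same route as the paper: reduce to Proposition~\ref{prop:d-norm-crit} by using Lemma~\ref{lem:d-E-partialDelta} to bound the codimension of $E$, and separately argue squarefreeness of $\ol{\Delta}$ via the DVR localization at height one primes exactly as in Proposition~\ref{prop:sign-norm-crit2}. The paper's own proof is terser (it simply invokes Proposition~\ref{prop:d-norm-crit} and points back to Proposition~\ref{prop:sign-norm-crit2} for the squarefree step), but the content is identical.
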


\begin{proof}
  We apply Proposition~\ref{prop:d-norm-crit}. The set $E$ there has codimension $\ge 2$ by the present assumption (c) and Lemma~\ref{lem:E-partialDelta}. It thus suffices to prove that $\ol{\Delta}$ is squarefree, and the proof of this is similar to the corresponding proof in Proposition~\ref{prop:sign-norm-crit2}.
\end{proof}

\subsection{Type D factorization rings}

As in the previous section, let $f(u)= u^{2n} + \sum_{i=1}^{n-1} a_{2n-2i} u^{2i} + \alpha^2$ be a monic polynomial in $u^2$ over a ring $A$ together with a choice of square root $\alpha$ for its constant term. Let $p$ and $q$ be non-negative integers such that $p+q=n$, and put $g=\sum_{i=0}^p b_{p-i} u^i$ and $h=\sum_{i=1}^{q} c_{2q-2i} u^{2i} + \beta^2$, where $b_0=c_0=1$ and the remaining $b_i$, $c_i$, and $\beta$ are formal symbols. We define the \defn{type D $(p,q)$-factorization ring} of the pair $(f, \alpha)$ by
\[
  \DFact^{p,q}_A(f, \alpha) = A[b_1, \ldots, b_p, c_2, \ldots, c_{2q-2}, \beta]/I,
\]
where $I$ is the ideal generated by equating the coefficients of
\[
  f(u)=g(u) g(-u) h(u), \qquad \alpha = b_p \beta.
\]
If $A$ is graded and $a_i$ is homogeneous of degree $2i$ and $\alpha$ homogeneous of degree $n$, then $\DFact^{p,q}_A(f)$ is graded with $\deg(b_i)=i$, $\deg(c_{2i})=2i$, and $\deg(\beta)=q$. Formation of the type D factorization ring is compatible with base change, as with the type D splitting ring.

In what follows, we let $B=\DSplit_A(f, \alpha)$ and $C=\DFact^{p,q}_A(f, \alpha)$.

\begin{proposition} \label{prop:d-sign-fact}
We have the following:
\begin{enumerate}
\item We have a natural $A$-algebra isomorphism $B=\Split_C(g) \otimes_C \DSplit_C(h, \beta)$.
\item As an $A$-module, $C$ is free of rank $2^p \binom{n}{p}$ if $p < n$, and is free of rank $2^{n-1}$ if $n=p$.
\item The map $A \to C$ is syntomic.
\item If $A$ satisfies Serre's condition $(S_k)$, then so does $C$. In particular, if $A$ is Cohen--Macaulay, then so is $C$.
\item If $B$ is reduced (resp., integral, normal) then so is $C$.
\end{enumerate}
\end{proposition}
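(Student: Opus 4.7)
The plan is to follow the template of Proposition~\ref{prop:sign-fact} closely, with the additional relation $\alpha=b_p\beta$ in $C$ playing the role of the type D constraint and the boundary case $p=n$ requiring separate treatment.

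For (a), I construct inverse $A$-algebra maps between $B$ and $B'\coloneq\Split_C(g)\otimes_C\DSplit_C(h,\beta)$ in direct analogy with Proposition~\ref{prop:sign-fact}(a). The new point, relative to the signed case, is that the relation $\alpha=b_p\beta$ in $C$ combines with the tautological splittings---in particular $\beta=\prod_{i=p+1}^n\eta'_i$ from $\DSplit_C(h,\beta)$ and the analogous product expression for $b_p$ from $\Split_C(g)$---to yield the type D relation $\prod_{i=1}^n\eta'_i=\alpha$ over $B'$, so that $\eta_i\mapsto\eta'_i$ gives a well-defined map $\phi\colon B\to B'$. Conversely, inside $B$ set $g^\ast(u)=\prod_{i=1}^p(u-\eta_i)$, $h^\ast(u)=\prod_{i=p+1}^n(u^2-\eta_i^2)$, and $\beta^\ast=\prod_{i=p+1}^n\eta_i$; these satisfy the defining relations of $C$, so they determine a map $C\to B$, which extends by the tautological splittings of $g^\ast$ and $h^\ast$ to an inverse $\psi\colon B'\to B$.

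For (b) with $p<n$, combine (a) with Proposition~\ref{prop:d-sign-split}(a) (applied both to $B$ over $A$ and to $\DSplit_C(h,\beta)$ over $C$) and with \cite[Proposition 3.1(a)]{superres} (applied to $\Split_C(g)$ over $C$) to obtain an $A$-module isomorphism $C^{\oplus p!\cdot 2^{q-1}q!}\cong A^{\oplus 2^{n-1}n!}$, so that $C$ is projective over $A$ of constant rank $2^p\binom{n}{p}$; then pass to the universal case $A^\univ=\bZ[a_2,\dots,a_{2n-2},\tilde{\alpha}]$, which is graded, to upgrade projective to free using that a graded projective module over a graded polynomial ring is free. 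In the boundary case $p=n$ (so $q=0$), $h=\beta^2$ degenerates and the iterated-splitting decomposition from (a) breaks, so I instead carry out a direct universal computation in the spirit of Proposition~\ref{prop:univ-free}, showing that $C^\univ$ is a graded complete intersection of the claimed rank $2^{n-1}$. Parts (c) and (d) then follow immediately: by (b), each fibre $C\otimes_A\kappa(\fp)$ is finite over $\kappa(\fp)$ and presented as a quotient of a polynomial ring by the correct number of relations, so it is a zero-dimensional complete intersection, making $A\to C$ syntomic, and Serre's condition $(S_k)$ descends through syntomic maps \stacks{036A}.

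For (e), part (a) together with Proposition~\ref{prop:d-sign-split}(a) and \cite[Proposition 3.1(a)]{superres} shows that $B$ is a free $C$-module, so $C\hookrightarrow B$ is an inclusion of a subring and $C\to B$ is faithfully flat; reducedness and integrality of $B$ pass directly to $C$, and normality descends through the faithfully flat map. The main obstacle I anticipate is the boundary case $p=n$: when $q=0$, the $\DSplit_C(h,\beta)$ factor no longer has the standard form, so the rank count, the complete intersection presentation, and (should one wish to follow the retraction-style argument of Proposition~\ref{prop:sign-fact}(e)) the existence of a $C$-linear retraction must all be reestablished by a direct universal computation, carefully accounting for how the type D relation $\alpha=b_p\beta$ interacts with the degenerate constant term of $h$.
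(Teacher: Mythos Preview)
Your proposal is correct and follows essentially the same route as the paper, which simply says ``similar to the proof of Proposition~\ref{prop:sign-fact}.'' Two small remarks: first, you are more cautious than necessary about $p=n$---with the convention that the type~D splitting ring of a degree~$0$ polynomial is the base ring, part~(a) still holds as $B\cong\Split_C(g)$, and the rank count $2^{n-1}n!/n!=2^{n-1}$ goes through without a separate universal computation (though your direct approach is of course also valid). Second, for normality in~(e) you invoke faithfully flat descent, whereas the paper's template argument in Proposition~\ref{prop:sign-fact}(e) uses a $C$-linear retraction of $C\hookrightarrow B$; both work, and your version is arguably cleaner since it avoids appealing to a splitting.
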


\begin{proof}
This is similar to the proof of Proposition~\ref{prop:sign-fact}.
\end{proof}

Just like the type D splitting rings, we have a geometric source of type D factorization rings. Let $X$ be a smooth variety over an algebraically closed field and let $\cE$ be a rank $2n$ vector bundle on $X$ equipped with an orthogonal form $\Sym^2 \cE \to \cO_X$. Let $A$ be the Chow ring of $X$ with $1/2$ adjoined and let $a_{2i} = c_{2i}(\cE)$, where $c_{2i}(\cE)$ is the $2i$th Chern class of $\cE$. As before, let $\alpha$ be the Euler class of $\cE$, so that $\alpha^2 = c_{2n}(\cE)$.

If $p<n$, then the type D $(p,q)$-factorization ring $C$ is the Chow ring (with $1/2$ adjoined) of the relative Grassmannian $\bO\Gr_p(\cE)$; its fiber over $x \in X$ is the Grassmannian of rank $p$ isotropic subspaces of $\cE|_x$. For $p=n$, we instead have to consider the relative Grassmannian, whose fiber over $x \in X$ consists of (unordered) pairs of rank $n$ isotropic subspaces of $\cE|_x$ such that their intersection is a subbundle of rank $n-1$. We will call this $\bO\Gr_n(\cE)$.

\begin{remark}
  When $p=n-1$, $\bO\Gr_{n-1}$ is not the quotient of $\SO_{2n}(\bC)$ by a maximal parabolic subgroup. In terms of Bourbaki labeling of the type D Dynkin diagram, this corresponds to the parabolic subgroup that fixes both of the fundamental weights $\omega_{n-1}$ and $\omega_{n}$.
\end{remark}

An important case for us is when $X = \Spec(\bC)$ and $\cE=\bC^{2n}$, so that $f=u^{2n}$ (and $\alpha=0$). In that case, this discussion gives the following result (we note that the Chow ring and singular cohomology ring of $\bO\Gr_p(\bC^{2n})$ are isomorphic since it has a cellular decomposition).

\begin{proposition} \label{prop:d-sign-fact-grass}
Suppose that $1/2\in A$. Regard $A$ as graded and concentrated in degree~$0$, and $\DFact^{p,q}_A(u^{2n},0)$ as graded with each $\eta_i$ of degree~$2$. Then we have a natural isomorphism of graded rings
\begin{displaymath}
\DFact^{p,q}_A(u^{2n}, 0) = A \otimes_{\bZ[1/2]} \rH^*_{\sing}(\bO\Gr_p(\bC^{2n}), \bZ[1/2]).
\end{displaymath}
\end{proposition}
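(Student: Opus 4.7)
The plan is to specialize the general geometric description of type D factorization rings given immediately before the statement to the universal case, then base-change. Here is how I would organize this.

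\textbf{Step 1: Universal case via the geometric construction.} First I would take $X = \Spec(\bC)$ and $\cE = \bC^{2n}$ equipped with the standard split orthogonal form. Since $\cE$ is trivial, all its Chern classes vanish, so the Chern polynomial becomes $f = u^{2n}$; moreover the Euler class of a trivial orthogonal bundle vanishes, so $\alpha = 0$. The general geometric identification stated in the preceding discussion (resting on \cite[Theorem 8.2]{GSS} and \cite[\S6]{edidin-graham}) then says that
\[
\DFact^{p,q}_{\bZ[1/2]}(u^{2n}, 0) \;\cong\; \mathrm{CH}^*(\bO\Gr_p(\bC^{2n}))[1/2]
\]
as graded rings, where the generator $b_i$ (resp.\ $c_{2i}$, resp.\ $\beta$) is identified with the $i$th Chern class of the tautological isotropic subbundle $\cR$ (resp.\ $2i$th Chern class of $\cR^\perp/\cR$, resp.\ the Euler class of $\cR^\perp/\cR$). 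One should check that the relations $f(u) = g(u) g(-u) h(u)$ and $\alpha = b_p \beta$ imposed in $\DFact^{p,q}$ match the Chern-class and Euler-class identities on $\bO\Gr_p$; the case $p = n$, where $\bO\Gr_n$ is the variety of unordered pairs as in the Remark, needs to be handled separately but is again covered by Edidin--Graham.

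\textbf{Step 2: Chow equals singular cohomology.} Next I would invoke the Schubert cell decomposition of $\bO\Gr_p(\bC^{2n})$: for $p < n$ this is the standard stratification of the isotropic Grassmannian by Bruhat cells of the orthogonal group, while for $p = n$ each isotropic Grassmannian component is cellular and the unordered-pair variety of the Remark inherits a cell decomposition from these. For any smooth complex projective variety with an algebraic cell decomposition, the cycle class map is a graded ring isomorphism between $\mathrm{CH}^*$ and $\rH^*_{\sing}$ (with degrees related by a factor of $2$), so that
\[
\mathrm{CH}^*(\bO\Gr_p(\bC^{2n}))[1/2] \;\cong\; \rH^*_{\sing}(\bO\Gr_p(\bC^{2n}), \bZ[1/2]),
\]
matching the declared grading in which each $\eta_i$ (equivalently each generator $b_i$) sits in cohomological degree $2$.

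\textbf{Step 3: Base change.} Finally I would combine Steps 1 and 2 with the compatibility of $\DFact$ with base change, stated in the paper: for any $A$ with $1/2 \in A$,
\[
\DFact^{p,q}_A(u^{2n}, 0) \;\cong\; A \otimes_{\bZ[1/2]} \DFact^{p,q}_{\bZ[1/2]}(u^{2n}, 0),
\]
and substituting the isomorphism from Steps 1--2 gives the claimed natural isomorphism of graded rings.

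\textbf{Main obstacle.} The routine part is the base change in Step 3; the real content is Step 1, where one must verify that the Edidin--Graham presentation of $\mathrm{CH}^*(\bO\Gr_p(\cE))[1/2]$ matches our $\DFact^{p,q}$ presentation on the nose, including the somewhat subtle relation $\alpha = b_p \beta$ (which reflects the Euler class of $\cE$ being the product of the Euler classes of $\cR$ and $\cR^\perp/\cR$ only after choosing compatible isotropic splittings) and the separate description of $\bO\Gr_n$ as the unordered-pair variety. Once that bookkeeping is done, the rest is formal.
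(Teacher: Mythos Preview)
Your approach is correct and is exactly what the paper does: the paper gives no separate proof of this proposition, instead deriving it from the preceding geometric discussion (the identification of $\DFact^{p,q}$ with the Chow ring of $\bO\Gr_p(\cE)$ with $1/2$ adjoined, via \cite{GSS} and \cite{edidin-graham}) specialized to $X=\Spec(\bC)$, $\cE=\bC^{2n}$, together with the parenthetical remark that the Chow ring and singular cohomology agree because of the cellular decomposition. Your Step~3 base-change is implicit in the paper's formulation, and your ``main obstacle'' is precisely the content the paper outsources to the cited references.
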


\section{Periplectic case: the determinantal variety} \label{s:pedetvar}

Throughout this section, $V$ will denote a complex vector space of dimension $n$ and $0 \le r \le n$ is another integer.

Let $W_0 \subset \Hom(V, V^*)$ be the space of skew-symmetric maps, let $W_1 \subset \Hom(V^*, V)$ be the space of symmetric maps, and put
  \[
    W=W_0 \times W_1,
  \]
  all thought of as affine varieties. We will generally refer to points of $W$ as pairs $(f,g)$. One can identify $W_0$ with $\lw^2(V^*)$ and $W_1$ with $\Sym^2(V)$, but we prefer to think in terms of linear maps.

Let $Z_0 \subseteq W_0$ be the (reduced) subvariety of maps of rank $\le 2(n-r)$, and let $Z_1 \subseteq W_1$ be the (reduced) subvariety of maps of rank $\le 2r$.  Set
  \[
    Z=Z_0 \times Z_1.
  \]
Note that $Z_0=W_0$ if $2r \le n$ and that $Z_1=W_1$ if $2r \ge n$.
  
\subsection{Invariant theory} \label{ss:invt-theory}

For this section, we assume that $0 < 2r \le n$. Our goal is to describe a certain $\GL(V)$-equivariant double covering $Z' \to Z$. In the case that $2r=n$, we will have $Z' = \Spec(\cO_Z[y]/(y^2 - \det g))$ where $g$ represents the generic symmetric matrix, but in all other cases, the construction is more subtle.

First, we describe an alternative construction of $Z$ (more specifically, $Z_1$). Let $E$ be a vector space of dimension $2r$ equipped with a nondegenerate symmetric bilinear form. Let $\bO(E)$ be the corresponding orthogonal group. Note that if $x \in \bO(E)$, then $\det x = \pm 1$; let $\SO(E)$ be the special orthogonal group, which is the index 2 subgroup of $\bO(E)$ consisting of matrices with determinant 1.

Now consider the space of linear maps $X = \hom(E,V)$. Let $J \colon E^* \to E$ be the isomorphism induced by the orthogonal form on $E$. We have a surjective $\bO(E)$-equivariant map
\[
  X \to Z_1, \qquad \phi \mapsto \phi J \phi^*,
\]
which identifies $\cO_{Z_1}$ with the $\bO(E)$-invariant subring of $\cO_X$ (this combines what is typically referred to as the first and second fundamental theorems of invariant theory for the orthogonal group, see for instance \cite[5.2.2, 12.2.14]{GW}). So $Z_1$ has rational singularities \cite[Corollaire]{boutot}.

We can also define an extended version of this map
\[
  X \to Z_1 \times \bigwedge^{2r} V
\]
where the first component is as before, and the second map records the Pl\"ucker coordinates of $\phi$, i.e., with respect to some basis, we are taking the maximal minors of $\phi$. Let $Z'_1$ be the image of this extended map and set
\begin{align} \label{eqn:Z'}
  Z' = Z_0 \times Z'_1 = W_0 \times Z'_1.
\end{align}
This map is $\SO(E)$-equivariant and identifies $\cO_{Z'_1}$ with the $\SO(E)$-invariant subring of $\cO_X$. \cite[Theorem 10.2]{KP}. In particular, $Z'_1$ and $Z'$ have rational singularities (and hence are normal and Cohen--Macaulay). When discussing $Z'$ or $Z'_1$, we will refer to the coordinates of $\bigwedge^{2r} V$ as Pl\"ucker coordinates.

Finally, $\Gamma = \bO(E)/\SO(E)$ acts on $Z'_1$, and $\cO_{Z_1}$ is the $\Gamma$-invariant subring of $\cO_{Z'_1}$. Since $\Gamma \cong \bZ/2$, we see that $Z'_1 \to Z_1$ is finite map of degree 2, and similarly so is $Z' \to Z$. 

So far, the discussion has been mostly about $Z_1$ and $Z'_1$, so now we bring in $W_0$. The Pl\"ucker coordinates on $Z_1$ span the space $\bigwedge^{2r}(V^*)$ while the span of the Pfaffians of order $2r$ on $W_0$ is the space $\bigwedge^{2r}V$. We will consider the 1-dimensional space of $\GL(V)$-invariants in $\bigwedge^{2r}V \otimes \bigwedge^{2r}(V^*)$.

To pin down a specific $\GL(V)$-invariant element $\Phi$, let's pick a basis $e_1,\dots,e_n$ for $V$ and a basis $x_1,\dots,x_{2r}$ for $E$ such that $J = \begin{pmatrix} 0 & I_r \\ I_r & 0 \end{pmatrix}$. Note that the choice of basis for $V$ will not matter because of the $\GL(V)$-equivariance; different choices of bases for $E$ will affect the definition of $\Phi$ by potentially a sign, so {\it we will fix this basis for $E$ for all future computations.}

Using these bases, we can write $\phi$ as an $n \times 2r$ matrix and $f \in W_0$ as an $n \times n$ skew-symmetric matrix (use the dual basis $e_1^*,\dots,e_n^*$ for $V^*$). Given a size $2r$ subset $S$ of $\{1,\dots,n\}$, let $\phi_S$ be the determinant of the submatrix of $\phi$ whose rows are indexed by $S$, and let $f_S$ be the Pfaffian of the submatrix of $f$ whose rows and columns are indexed by $S$. Then we define
\begin{align} \label{eqn:Phi}
  \Phi = \sum_S f_S \phi_S.
\end{align}

Finally, we will later need to compare this construction to the one in \cite[\S 6.3]{weyman}, so we explain that now using our current notation.

Consider the Grassmannian $\Gr_r(V)$ of rank $r$ subspaces of $V$ with tautological sequence
\[
  0 \to \cR \to V \otimes \cO_{\Gr_r(V)} \to \cQ \to 0
\]
where $\cR$ is the tautological rank $r$ subbundle. We have an inclusion $\Sym^2(\cQ^*) \subset \Sym^2(V^*) \otimes \cO_{\Gr_r(V)}$ and let $\eta$ denote the quotient. This is the construction considered in \cite[\S 6.3, ``The second incidence variety'']{weyman} (in addition to the permuted notation, we note that under the duality $\Gr_r(V) \cong \Gr_{n-r}(V^*)$, the roles of the tautological subbundle and quotient bundles are swapped). Then \cite[Proposition 6.3.3]{weyman} computes the Tor groups of $\rH^0(\Gr_r(V), \Sym \eta)$, regarded as a module over $\Sym(W_1^*)$. This is isomorphic to $Z'_1$ by \cite[Proposition 4.10]{lwood} (in the notation there, $\ol{B}_\emptyset = \cO_{Z_1}$ while $\ol{M}_\emptyset = \rH^0(\Gr_r(V), \Sym \eta)$).

\subsection{Statement of results} \label{ss:pedetvar}

In what follows, $T$ denotes a general $\bC$-algebra and $V_T = T \otimes V$. We consider the following situation 
\begin{itemize}
\item Let $\chi(u) \in \cO_Z[u]$ denote the characteristic polynomial of $fg$, where as above, a general point of $Z$ is referred to as a pair $(f,g)$.
  \begin{itemize}
  \item If $2r > n$, let $\ol{\chi}(u) = \chi(u)/u^{2r-n}$, define $Z'=Z$, and set
    \[
      \tilde{Z} = \Spec(\SFact^{n-r,0}_{\cO_Z}(\ol{\chi})), \qquad \tilde{\cZ} = \Spec(\SSplit_{\cO_Z}(\ol{\chi})).
    \]
    
  \item If $2r \le n$, let $\ol{\chi}(u) = \chi(u) / u^{n-2r}$, define $Z'$ as in \eqref{eqn:Z'}, and set
    \[
      \tilde{Z} = \Spec(\DFact^{r,0}_{\cO_{Z'}}(\ol{\chi}, \Phi)), \qquad \tilde{\cZ} = \Spec(\DSplit_{\cO_{Z'}}(\ol{\chi}, \Phi)),
    \]
    where $\Phi$ is as defined in \eqref{eqn:Phi}.
    We will show in Lemma~\ref{lem:chi-bar} that $\ol{\chi}$ is indeed a polynomial.
  \end{itemize}
  
\item Let $Y$ be the scheme defined as follows: a $T$-point is a tuple $(f, g, R)$ where:
\begin{itemize}
\item $R \subset V_T$ is a $T$-submodule that is locally a rank $r$ summand.
\item $f \colon V_T \to V^*_T$ is a skew-symmetric map of $T$-modules for which $R$ is isotropic.
\item $g \colon V_T^* \to V_T$ is a symmetric map of $T$-modules for which $(V_T/R)^*$ is isotropic.
\end{itemize}
This is a vector bundle over the Grassmannian $\Gr_r(V)$.
\item Let $\pi \colon Y \to \tilde{Z}$ be the map taking $(f,g,R)$ to $(f,g,p)$ where $p$ is the characteristic polynomial of $fg$ on $(V_T/R)^*$ if $2r > n$, and is the characteristic polynomial of $gf$ on $R$ if $2r\le n$. (We prove this is well-defined in Lemma~\ref{lem:neg-div}.)
  
\item If $2r > n$, let $\cY$ be the scheme defined as follows: a $T$-point is a tuple $(f, g, F_\bullet)$ where:
  \begin{itemize}
  \item $F_\bullet = (F_{2r-n} \subset F_{2r-n+1} \subset \cdots \subset F_{n-1})$ is a flag of locally free $T$-summands of $V_T$ where $\rank F_i = i$.
  \item $f \colon V_T \to V^*_T$ is a skew-symmetric map of $T$-modules such that $f(F_{r-i}) \subseteq (V_T/F_{r+i})^*$ for $i \ge 0$. Note that this forces $F_r$ to be isotropic for $f$.
\item $g \colon V_T^* \to V_T$ is a symmetric map of $T$-modules such that $g((V_T/F_{r+i})^*) \subseteq F_{r-i}$ for $i \ge 0$. Note that this forces $(V_T/F_r)^*$ to be isotropic for $g$.
\end{itemize}

This is a vector bundle over the partial flag variety $\Fl(2r-n, 2r-n+1,\dots, n-1, V)$.

Let $\rho \colon \cY \to \tilde{\cZ}$ be the map taking $(f,g,F_\bullet)$ to $(f,g,\lambda_1,\dots,\lambda_{n-r})$ where $\lambda_i$ is the eigenvalue of $fg$ on $(F_{n-i+1}/F_{n-i})^*$. (We prove this is well-defined in Lemma~\ref{lem:neg-div}.)

\item If $2r \le n$, let $\cY$ be the scheme defined as follows: a $T$-point is a tuple $(f, g, F_\bullet)$ where:
  \begin{itemize}
  \item $F_\bullet = (F_1 \subset F_{2} \subset \cdots \subset F_{2r})$ is a flag of locally free $T$-summands of $V_T$ where $\rank F_i = i$.
  \item $f \colon V_T \to V^*_T$ is a skew-symmetric map of $T$-modules such that $f(F_{r-i}) \subseteq (V_T/F_{r+i})^*$ for $i \ge 0$. Note that this forces $F_r$ to be isotropic for $f$.
\item $g \colon V_T^* \to V_T$ is a symmetric map of $T$-modules such that $g((V_T/F_{r+i})^*) \subseteq F_{r-i}$ for $i \ge 0$. Note that this forces $(V_T/F_r)^*$ to be isotropic for $g$.
\end{itemize}
This is a vector bundle over the partial flag variety $\Fl(1, 2, \dots, 2r, V)$.

Let $\rho \colon \cY \to \tilde{\cZ}$ be the map taking $(f,g,F_\bullet)$ to $(f,g,\lambda_1,\dots,\lambda_{r})$ where $\lambda_i$ is the eigenvalue of $gf$ on $F_i/F_{i-1}$. (We prove this is well-defined in Lemma~\ref{lem:neg-div}.)
\end{itemize}
The purpose of \S \ref{s:pedetvar} is to study $\tilde{Z}$. Our main result is the following theorem:

\begin{theorem} \label{thm:pedetvar}
  We have the following:
\begin{enumerate}
\item $\tilde{\cZ}$ and $\tilde{Z}$ are integral and have rational singularities (and are thus normal and Cohen--Macaulay).
\item The maps $\tilde{\cZ} \to Z'$ and $\tilde{Z} \to Z'$ are finite flat; in fact, $\cO_{\tilde{\cZ}}$ is a free $\cO_{Z'}$-module of rank $s_0$ and $\cO_{\tilde{Z}}$ is a free $\cO_{Z'}$-module of rank $s_1$ where
  \[
    s_0 = \begin{cases} 2^{n-r} (n-r)! & \text{if $2r > n$} \\ 2^{r-1} r! & \text{if $2r \le n$} \end{cases}, \qquad
    s_1 = \begin{cases} 2^{n-r} & \text{if $2r > n$} \\ 2^{r-1}& \text{if $2r \le n$} \end{cases}.
  \]

\item If $2r > n$, equip $\bC^{2n-2r}$ with a symplectic form. We have isomorphisms of graded rings
  \begin{align*}
    \cO_{\tilde{\cZ}} \otimes_{\cO_Z} \bC &\cong \rH^*_{\sing}(\bI\Fl(\bC^{2n-2r}), \bC),\\
    \cO_{\tilde{Z}} \otimes_{\cO_Z} \bC &\cong \rH^*_{\sing}(\IGr_{n-r}(\bC^{2n-2r}), \bC),
  \end{align*}
  where the $\bI\Fl$ and $\IGr$ are the variety of isotropic flags, and isotropic subspaces, respectively.

\item If $2r \le n$, equip $\bC^{2r}$ with an orthogonal form. We have isomorphisms of graded rings
  \begin{align*}
    \cO_{\tilde{\cZ}} \otimes_{\cO_Z} \bC &\cong \rH^*_{\sing}(\bO\Fl(\bC^n), \bC) \otimes ( \bC \oplus \bigwedge^{2r} V^*),\\
    \cO_{\tilde{Z}} \otimes_{\cO_Z} \bC &\cong \rH^*_{\sing}(\IGr_r(\bC^n), \bC) \otimes ( \bC \oplus \bigwedge^{2r} V^*),
  \end{align*}
  where the $\bO\Fl$ and $\bO\Gr$ are the variety of isotropic flags, and isotropic subspaces, respectively,   with the conventions as in \S\ref{sec:typeD}, $\bC \oplus \bigwedge^{2r} V^*$ is a graded ring with $\deg(\bC) = 0$ and $\deg(\bigwedge^{2r} V^*) = r$; $\GL(V)$ acts on $\bigwedge^{2r} V^*$ in the natural way and trivially on everything else.

\item The maps $\rho \colon \cY \to \tilde{\cZ}$ and $\pi \colon Y \to \tilde{Z}$ are proper and birational. Moreover, we have $\rho_*(\cO_{\cY})=\cO_{\tilde{\cZ}}$ and $\pi_*(\cO_Y) = \cO_{\tilde{Z}}$ and $\rR^i \rho_*(\cO_{\cY}) = \rR^i \pi_* (\cO_Y) = 0$ for $i>0$.
\end{enumerate}
\end{theorem}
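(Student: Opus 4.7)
My plan is to address the five parts in dependency order: (b), then (c)--(d), then (e), and finally (a).

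Part (b) is essentially formal. By the construction of $\tilde{Z}$ and $\tilde{\cZ}$, the rings $\cO_{\tilde Z}$ and $\cO_{\tilde\cZ}$ are built from $\cO_{Z'}$ by applying the appropriate splitting or factorization ring functor from \S\ref{sec:typeB} or \S\ref{sec:typeD} to the polynomial $\ol{\chi}(u)$. Two preliminary facts are required: that $\ol{\chi}$ really is a polynomial in $u^2$ (Lemma~\ref{lem:chi-bar}) and, in the case $2r \le n$, that $\Phi^2$ equals the constant term of $\ol{\chi}$ (a Cauchy--Binet/Pfaffian--Plücker identity extracted from \eqref{eqn:Phi}). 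The ranks $s_0$ and $s_1$ then come from Propositions~\ref{prop:sign-split}(a), \ref{prop:sign-fact}(b), \ref{prop:d-sign-split}(a), and \ref{prop:d-sign-fact}(b), applied to a polynomial of degree $2(n-r)$ or $2r$ in $u^2$; the second case of Proposition~\ref{prop:d-sign-fact}(b), namely $p=n$, is what produces $2^{r-1}$ rather than $2^{r}$ for $\cO_{\tilde Z}$ when $2r\le n$.

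Parts (c) and (d) then follow by base change along $\cO_Z \to \bC$ at the origin, where $f = g = 0$ and hence $\chi(u) = u^n$, so $\ol{\chi}$ becomes $u^{2(n-r)}$ or $u^{2r}$ with $\Phi = 0$. Substituting these into Propositions~\ref{prop:split-flag}, \ref{prop:sign-fact-grass}, \ref{prop:d-split-flag}, and \ref{prop:d-sign-fact-grass} gives the claimed identifications with singular cohomology of the relevant isotropic flag and Grassmannian varieties. For $2r \le n$ there is also the factor $\cO_{Z'} \otimes_{\cO_Z} \bC$, which by the invariant-theoretic description in \S\ref{ss:invt-theory} splits as a $\GL(V)$-module into $\bC \oplus \bigwedge^{2r} V^*$ (the semi-invariants being the Plücker coordinates, concentrated in degree $r$).

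For (e), properness of $\pi$ and $\rho$ is automatic since $Y$ and $\cY$ are vector bundles over proper partial flag varieties. Birationality comes from restricting to the open locus of $\tilde Z$ (resp.\ $\tilde\cZ$) on which the nonzero roots of $\ol{\chi}$ are all distinct: there the eigenspace decomposition of $gf$ (or $fg$) uniquely recovers the subspace $R$ (resp.\ the flag $F_\bullet$), so the maps are isomorphisms on this locus. The main obstacle is the combination of the higher direct image vanishing $\rR^i\pi_*\cO_Y = \rR^i\rho_*\cO_{\cY} = 0$ for $i>0$ with the identification $\pi_*\cO_Y = \cO_{\tilde Z}$ (and similarly for $\rho$). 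My approach is the Kempf--Weyman method: $\cO_Y$ is the symmetric algebra of the dual of the vector subbundle defining $Y$ inside the trivial $W$-bundle on the flag variety, so the problem reduces to showing $\rH^{>0}$ of each symmetric power of this bundle vanishes and that its $\rH^0$ reconstructs $\cO_{\tilde Z}$. I would filter the symmetric powers by tensor products of Schur functors in the tautological sub- and quotient bundles of the isotropic/orthogonal partial flag variety and invoke Borel--Weil--Bott, paralleling the argument in \cite{superres} and \cite[\S 6.3]{weyman}.

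Finally, for (a), integrality of $\tilde\cZ$ and $\tilde Z$ follows from (e): the smooth irreducible varieties $\cY$ and $Y$ surject birationally onto them. Rational singularities then follow because (e) provides a smooth proper birational resolution with vanishing higher direct images of the structure sheaf, which is the definition of rational singularities; normality and Cohen--Macaulayness are automatic consequences.
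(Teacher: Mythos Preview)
Your handling of (b), (c), and (d) matches the paper's: those parts are formal consequences of the freeness and base-change results for the signed and type~D splitting and factorization rings, once Lemma~\ref{lem:chi-bar} and the identity $\Phi^2=\ol\chi(0)$ are verified.

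The gap is in (e) and (a), where you invert the paper's logical order. The paper \emph{first} proves normality of $\tilde\cZ$ directly (Proposition~\ref{prop:tildeZ-normal}) via the discriminant criteria of Propositions~\ref{prop:sign-norm-crit2} and~\ref{prop:d-sign-norm-crit2}: this requires determining the irreducible components of $V(\Delta)$ and exhibiting, on each one, an explicit $\bC[\epsilon]/(\epsilon^2)$-point at which $d\Delta\ne 0$, so that $V(\Delta,\partial\Delta)$ has codimension $\ge 2$. Only with normality in hand does the paper upgrade the set-theoretic bijection $\rho^{-1}(U)\to U$ to a scheme isomorphism (Zariski's Main Theorem needs the target normal), check that $\rho^{-1}(\tilde\cZ\setminus U)$ has codimension $\ge 2$ in $\cY$, and then invoke \cite[Proposition~4.1]{superres}---a criterion that itself takes normality as a hypothesis---to conclude rational singularities. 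The pushforward identities in (e) are then a \emph{consequence} of rational singularities, not an input to it.

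Your plan instead attacks (e) by a direct Borel--Weil--Bott computation and then reads off (a). But such a computation would produce $\pi_*\cO_Y$ only as a graded $\GL(V)$-module; it does not by itself identify that module with $\cO_{\tilde Z}$. Since $\pi$ is proper and birational from a smooth source, $\pi_*\cO_Y$ is the normalization of $\cO_{\tilde Z}$, so the equality $\pi_*\cO_Y=\cO_{\tilde Z}$ is \emph{equivalent} to normality of $\tilde Z$---exactly the step you have skipped. Your appeal to ``paralleling the argument in \cite{superres}'' does not help, as \cite{superres} follows the same normality-first strategy rather than a direct cohomology computation. Finally, the deduction of (a) from (e) is circular: the definition of rational singularities already presupposes normality (equivalently $\pi_*\cO_Y=\cO_{\tilde Z}$), so normality cannot be extracted as a corollary.
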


\subsection{Normal forms} \label{ss:normal-form}

Here we discuss some linear algebra related to $Z$. Recall that $(f,g) \in Z$ means that $f \colon V \to V^*$ is a skew-symmetric matrix of rank $\le 2(n-r)$ and that $g\colon V^* \to V$ is a symmetric matrix of rank $\le 2r$. First we consider the case $2r > n$. 

\begin{proposition}
  Suppose that $2r > n$. 
  Given $(f,g) \in Z$, there exists a basis $e_1,\dots,e_n$ for $V$ such that in block matrix form (using the dual basis $e_1^*,\dots,e_n^*$ for $V^*$), with block sizes $2r-n$ and $2n-2r$, we have
  \[
    f = \begin{pmatrix} 0 & 0 \\ 0 & A \end{pmatrix}, \qquad
    g = \begin{pmatrix} B & C \\ C^T & D \end{pmatrix}
  \]
  where
  \begin{itemize}
  \item $A$ is a skew-symmetric $(2n-2r) \times (2n-2r)$ matrix which is lower-triangular with respect to the antidiagonal.
  \item $B$ is a symmetric $(2r-n) \times (2r-n)$ matrix, 
  \item $C$ is a $(2r-n) \times (2n-2r)$ matrix, and 
  \item $D$ is a symmetric $(2r-2n) \times (2r-2n)$ matrix which is upper-triangular with respect to the antidiagonal.
  \end{itemize}
\end{proposition}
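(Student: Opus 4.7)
The plan is to first split off the kernel of $f$, which produces the block form of $f$ for free, and then to refine the basis on the $2m$-dimensional complement (where $m := n-r$) by an inductive procedure based on choosing an eigenvector of the operator $DA$ at each step.

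Since $\rank f \le 2(n-r)$, the kernel $\ker f$ has dimension at least $2r-n$; pick any $(2r-n)$-dimensional $F \subseteq \ker f$, any complement $V' \subset V$, and any bases of $F$ and $V'$. Then $f$ has block form $\left(\begin{smallmatrix} 0 & 0 \\ 0 & A \end{smallmatrix}\right)$ with $A$ a nondegenerate skew $2m \times 2m$ matrix, and $g$ automatically has the form $\left(\begin{smallmatrix} B & C \\ C^T & D \end{smallmatrix}\right)$ with $B, D$ symmetric. Any further basis change $P$ preserving the shape of $f$ must be block upper-triangular (otherwise the lower-left block $P_{21}$ of the change would contribute $P_{21}^T A P_{21}$ to the top-left of $f$, forcing $P_{21}=0$ since $A$ is nondegenerate), and under such $P$ the blocks $B$ and $C$ depend only on $P_{11}, P_{12}$ while $A$ and $D$ depend only on $P_{22}$. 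Thus the problem decouples, and it suffices to find a basis of $V'$ making both $A$ (the matrix of a nondegenerate skew form $\omega$ on $V'$) and $D$ (the matrix of a symmetric form $\sigma$ on $(V')^*$) anti-triangular.

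I would then prove by induction on $m$: for any such pair $(\omega, \sigma)$ on a $2m$-dimensional complex space, there is a basis $u_1, \ldots, u_{2m}$ with $\omega(u_i, u_j) = 0$ for $i + j \le 2m$ and $\sigma(u_i^*, u_j^*) = 0$ for $i + j \ge 2m + 2$. For the inductive step, let $A \colon V' \to (V')^*$ and $D \colon (V')^* \to V'$ denote the associated maps, and pick any eigenvector $u_1 \ne 0$ of $T := DA$ with eigenvalue $\lambda$; set $\phi := A(u_1)$. Skew symmetry gives $\phi(u_1) = \omega(u_1, u_1) = 0$, so $u_1 \in H := \ker \phi$. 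A one-line computation using symmetry of $\sigma$ shows that $\sigma(\phi, \phi) = \phi(D\phi) = \lambda \phi(u_1) = 0$ (so $\phi$ is $\sigma$-isotropic) and that $\phi(D\xi) = \sigma(\phi, \xi) = \xi(D\phi) = \lambda \xi(u_1) = 0$ for every $\xi \in (V')^*$ with $\xi(u_1) = 0$ (so $D$ sends the annihilator $u_1^\perp \subset (V')^*$ into $H$). These compatibilities show that $\omega$ descends to a nondegenerate skew form on $\bar V := H/\bC u_1$ and $\sigma$ descends to a symmetric form on $\bar V^* = u_1^\perp / \bC \phi$, producing the same type of pair on a space of dimension $2m - 2$. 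By induction, choose a basis $\bar u_2, \ldots, \bar u_{2m-1}$ of $\bar V$ with the desired normal form, lift to $u_2, \ldots, u_{2m-1} \in H$, and pick $u_{2m} \in V'$ with $\phi(u_{2m}) = 1$; a short case check using $u_{2m}^* = \phi$, $A(u_1) = u_{2m}^*$, and $D(u_{2m}^*) = \lambda u_1$ confirms that the resulting matrices of $\omega$ and $\sigma$ have all required vanishing entries.

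The main obstacle is the descent step: ensuring that both $\omega$ and $\sigma$ descend compatibly to a pair of the same type on a $(2m-2)$-dimensional quotient. Choosing $u_1$ as an eigenvector of $DA$, rather than an arbitrary nonzero vector, is precisely what makes this work, since it simultaneously forces $\phi = A(u_1)$ to be $\sigma$-isotropic and $D$ to preserve $u_1^\perp$ modulo $\bC \phi$; together these give the reduced pair on $\bar V$ and allow the recursion to proceed.
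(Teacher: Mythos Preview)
Your reduction step contains a genuine gap. You claim that after splitting off a $(2r-n)$-dimensional subspace $F \subseteq \ker f$, the block $A$ (the restriction of $f$ to the complement $V'$) is a \emph{nondegenerate} skew form. This holds only when $\dim\ker f = 2r-n$, i.e., when $\rank f$ attains its maximal value $2(n-r)$. For a general $(f,g)\in Z$ one has only $\rank f \le 2(n-r)$, so $\ker f$ may strictly contain $F$ and $A$ may be degenerate. Your induction, as stated, really uses nondegeneracy twice: to guarantee $\phi=A(u_1)\ne 0$ (so that $H=\ker\phi$ has codimension~$1$), and to guarantee that the descended form $\bar\omega$ on $H/\bC u_1$ is again nondegenerate. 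In the degenerate case neither is automatic. Worse, there exist pairs $(A,D)$ with $A$ skew of rank~$2$ on a $4$-dimensional space and $D$ symmetric such that $DA$ is nilpotent with $\ker(DA)=\ker A$; then every eigenvector of $DA$ lies in $\ker A$, so no choice of $u_1$ makes $\phi\ne 0$, and the recursion cannot start. (Concretely: $A=\left(\begin{smallmatrix}J&0\\0&0\end{smallmatrix}\right)$ with $J=\left(\begin{smallmatrix}0&1\\-1&0\end{smallmatrix}\right)$ and $D=\left(\begin{smallmatrix}0&I_2\\I_2&0\end{smallmatrix}\right)$.)

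The paper takes a different route that sidesteps this issue entirely. It observes that the existence of the normal form for a given $(f,g)$ is equivalent to that point lying in the image of the projection $\cY\to Z$ (forgetting the flag). This map is proper and $Z$ is irreducible, so its image is closed, and it suffices to exhibit preimages over a Zariski-dense subset of $Z$: the locus where $\rank f=2(n-r)$, $g$ is invertible, and the nonzero eigenvalues of $gf$ are distinct. On that locus the eigenbasis construction is immediate (and your inductive argument also works, since there $A$ is nondegenerate). So your proof is correct on the generic locus and could be completed either by invoking this properness/closure step, or by reworking the induction to allow degenerate $\omega$ (e.g., placing the radical of $A$ in the middle of the flag before recursing); as written, however, it does not cover all of $Z$.
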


\begin{proof}
  In the notation of the previous section, the result is equivalent to showing that the composition $\cY \xrightarrow{\rho} \tilde{\cZ} \to Z$ is surjective, where the second map is the structure map for the signed splitting ring. Since $\rho$ is proper and $Z$ is irreducible, it suffices to show that there is a nonempty dense subset $U$ of $Z$ such that every point of $U$ has a nonempty fiber. We will take $U$ to be the  locus of pairs $(f,g)$ such that $\rank f = 2n-2r$, $g$ is invertible, and the $2n-2r$ nonzero eigenvalues of $gf$ are distinct. Then given $(f,g) \in U$, we take $e_1,\dots, e_{2r-n}$ to be any basis for $\ker f$. Order the eigenvectors with nonzero eigenvalues of $gf$ as $e_{2r-n+1},\dots,e_n$ so that the eigenvalues appear as $\lambda_1,\lambda_2\dots,-\lambda_2,-\lambda_1$. With respect to this basis, $A$ and $D$ are actually antidiagonal, so we're done.
\end{proof}

With the notation above, we have
\[
  fg = \begin{pmatrix} 0 & 0 \\ AC^T & AD \end{pmatrix}
\]
where again the block sizes are $2r-n$ and $2n-2r$. Let $\chi_{fg}(u)$ be the characteristic polynomial of $fg$. Then we see that $u^{2r-n}$ divides $\chi_{fg}(u)$, so that $\ol{\chi}_{fg}(u) = \chi_{fg}(u) / u^{2r-n}$ is also a polynomial.

Since $A$ is lower-triangular with respect to the antidiagonal and $D$ is upper-triangular with respect to the antidiagonal, $AD$ is lower-triangular (with respect to the usual diagonal). Let $a_1,a_2,\dots,a_{n-r}, -a_{n-r} , \dots, -a_1$ be the antidiagonal entries of $A$ (reading from top row to bottom) and let $d_1, \dots, d_{n-r}, d_{n-r}, \dots, d_1$ be the antidiagonal entries of $D$. Then the diagonal entries of $fg$ are
\begin{align*} \label{eqn:roots}
  a_1d_1, \dots, a_{n-r} d_{n-r}, -a_{n-r} d_{n-r}, \dots, -a_1d_1.
\end{align*}
These are the roots of $\ol{\chi}_{fg}(u)$.

Finally, we consider the case $2r \le n$. The proof for the next result is essentially the same as before, so we omit it.

\begin{proposition}
  Suppose that $2r \le n$. 
  Given $(f,g) \in Z$, there exists a basis $e_1,\dots,e_n$ for $V$ such that in block matrix form (using the dual basis $e_1^*,\dots,e_n^*$ for $V^*$), with block sizes $2r$ and $n-2r$, we have
  \[
    f = \begin{pmatrix} A & B \\ -B^T & C \end{pmatrix}, \qquad
    g = \begin{pmatrix} D & 0 \\ 0 & 0 \end{pmatrix}
  \]
  where
  \begin{itemize}
  \item $A$ is a skew-symmetric $2r \times 2r$ matrix which is lower-triangular with respect to the antidiagonal, 
  \item $B$ is a $2r \times (n-2r)$ matrix,
  \item $C$ is a skew-symmetric $(n-2r) \times (n-2r)$ matrix,    
  \item $D$ is a symmetric $2r \times 2r$ matrix which is upper-triangular with respect to the antidiagonal.
  \end{itemize}
\end{proposition}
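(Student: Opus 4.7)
The plan is to follow the proof of the preceding proposition (the $2r > n$ case) with only modest modifications, the roles of $f$ and $g$ being essentially swapped. The existence of such a basis for every $(f,g) \in Z$ is equivalent to the surjectivity of the composition $\cY \xrightarrow{\rho} \tilde{\cZ} \to Z$: a point of $\cY$ over $(f,g)$ is precisely a compatible flag $F_1 \subset \cdots \subset F_{2r}$, which is the data of the first $2r$ basis vectors. Since $\rho$ is proper and $Z$ is irreducible, it suffices to exhibit a dense open $U \subseteq Z$ whose points all have nonempty fiber. I will take $U$ to be the locus where $\rank g = 2r$ and $gf \colon V \to V$ has $2r$ distinct nonzero eigenvalues; the identity $(gf)^T = -fg$ forces these eigenvalues to come in pairs $\pm\lambda_1, \dots, \pm\lambda_r$.

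For $(f,g) \in U$, set $W = g(V^*) \subseteq V$, a $2r$-dimensional subspace. Symmetry of $g$ implies that $g$ vanishes on every covector annihilating $W$; choosing any basis $e_{2r+1}, \dots, e_n$ of a complement of $W$ thus forces $g$ to take the block shape $\bigl(\begin{smallmatrix}D & 0 \\ 0 & 0\end{smallmatrix}\bigr)$, independent of the basis chosen inside $W$. Within $W$, I pick eigenvectors $v_{\pm i} \in W$ of $gf|_W$ with eigenvalues $\pm \lambda_i$ and order them $e_i = v_i$, $e_{2r+1-i} = v_{-i}$ for $i = 1, \dots, r$. Viewing $g$ as inducing a non-degenerate symmetric form on $W$ (via the inverse of $g\colon W^\ast \xrightarrow{\sim} W$) and $f$ as inducing a skew-symmetric form on $W$, the eigenvector relation becomes $f(v_i, v_j) = \lambda_i g(v_i, v_j)$; combined with skew-symmetry of $f$ this yields $(\lambda_i + \lambda_j) g(v_i, v_j) = 0$, so both matrices become antidiagonal in this basis. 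Antidiagonality is stronger than the required triangularity conditions on $A$ and $D$; no constraints are needed on $B$ or $C$.

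The only non-formal step is the simultaneous antidiagonalization of $f|_W$ and $g|_W$ above, which uses solely the $\pm$-pairing of eigenvalues and skew-symmetry of $f$. Everything else --- the reformulation as surjectivity, the properness of $\rho$, and the density of $U$ --- transposes verbatim from the argument for the previous proposition.
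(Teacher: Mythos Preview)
Your proof is correct and follows exactly the approach the paper intends: the paper omits the proof entirely, saying it is ``essentially the same as before,'' and your argument carries out that analogy precisely---swapping the roles of $f$ and $g$, taking $W=g(V^*)$ in place of $\ker f$, and ordering eigenvectors of $gf|_W$ to force $A$ and $D$ antidiagonal. The extra detail you supply for the simultaneous antidiagonalization via the relation $(\lambda_i+\lambda_j)\,g(v_i,v_j)=0$ is a nice explicit justification of the step the paper leaves implicit in both cases.
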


With the notation above, we have
\[
  gf = \begin{pmatrix} DA & DB \\ 0 & 0 \end{pmatrix}
\]
where again the block sizes are $2r$ and $n-2r$. Let $\chi_{gf}(u)$ be the characteristic polynomial of $gf$. Then we see that $u^{n-2r}$ divides $\chi_{gf}(u)$, so that $\ol{\chi}_{gf}(u) = \chi_{gf}(u) / u^{n-2r}$ is also a polynomial.

Since $D$ is upper-triangular with respect to the antidiagonal and $A$ is lower-triangular with respect to the antidiagonal, $DA$ is upper-triangular (with respect to the usual diagonal). Let $a_1,a_2,\dots,a_{r}, -a_{r} , \dots, -a_1$ be the antidiagonal entries of $A$ (reading from top row to bottom) and let $d_1, \dots, d_{r}, d_{r}, \dots, d_1$ be the antidiagonal entries of $D$. Then the diagonal entries of $fg$ are
\begin{align*} \label{eqn:roots}
  -a_1d_1, \dots, -a_{r} d_{r}, a_{r} d_{r}, \dots, a_1d_1.
\end{align*}
These are the roots of $\ol{\chi}_{gf}(u)$.

\begin{proposition} \label{prop:Phi-constant}
  With respect to the coordinates above, up to a sign, we have
  \[
    \Phi = (-a_1d_1)\cdots (-a_rd_r) = (-1)^r d_1\cdots d_r \Pf(A).
  \]
\end{proposition}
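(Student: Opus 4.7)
The plan is to evaluate $\Phi = \sum_S f_S \phi_S$ directly in the normal form by selecting an explicit lift $\phi \in \Hom(E,V)$ with $\phi J \phi^* = g$ and noting that only one Pl\"ucker coordinate of $\phi$ can be nonzero. Since $g = \begin{pmatrix} D & 0 \\ 0 & 0 \end{pmatrix}$ in the chosen basis, I take $\phi$ of block form $\begin{pmatrix} \phi_1 \\ 0 \end{pmatrix}$ with $\phi_1$ a $2r \times 2r$ matrix satisfying $\phi_1 J \phi_1^T = D$; such a $\phi_1$ exists at least generically (where $D$ is invertible), and since $\Phi$ is polynomial in the coordinates of $Z'$, it suffices to verify the identity there. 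With this choice, $\phi_S = 0$ unless $S = \{1,\dots,2r\}$, in which case $\phi_S = \det \phi_1$, while $f_{\{1,\dots,2r\}} = \Pf(A)$. Hence the sum collapses to
\[
\Phi = \Pf(A) \cdot \det \phi_1.
\]

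Next, I compute each factor up to sign. Taking determinants of $\phi_1 J \phi_1^T = D$ and using $\det J = (-1)^r$ gives $(\det \phi_1)^2 = (-1)^r \det D$. Since $D$ is upper-triangular with respect to the antidiagonal, the only permutation contributing to $\det D$ is the reversal (any other $\sigma$ would violate the pointwise constraint $i + \sigma(i) \le 2r+1$, given that $\sum_i (i + \sigma(i)) = 2r(2r+1)$ forces equality); this yields $\det D = (-1)^r (d_1 \cdots d_r)^2$, where the sign is that of the reversal permutation on $2r$ elements. Hence $\det \phi_1 = \pm d_1 \cdots d_r$.

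For $\Pf(A)$, any perfect matching $\{\{i_k, j_k\}\}_{k=1}^r$ contributing to the Pfaffian expansion must satisfy $i_k + j_k \ge 2r+1$ for all $k$, since $A$ vanishes strictly above the antidiagonal. Summing gives $\sum_k (i_k + j_k) \ge r(2r+1) = \sum_{i=1}^{2r} i$, forcing equality in each pair, so the only contributing matching is the antidiagonal one. This yields $\Pf(A) = \pm A_{1,2r} A_{2,2r-1} \cdots A_{r,r+1} = \pm a_1 \cdots a_r$.

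Combining, $\Phi = \pm a_1 \cdots a_r \cdot d_1 \cdots d_r$, which agrees up to sign with both $\prod_{i=1}^r (-a_i d_i)$ and $(-1)^r d_1 \cdots d_r \Pf(A)$. The main technical nuisance, and the only real obstacle, would be consistent sign bookkeeping across the three sources (the reversal permutation, the chosen sign convention for the Pfaffian, and the choice of square root of $\det D$ in defining $\phi_1$), but this is entirely sidestepped because the statement only asks for agreement up to sign.
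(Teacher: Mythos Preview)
Your proof is correct and follows the same overall strategy as the paper: choose $\phi=\begin{pmatrix}\phi_1\\0\end{pmatrix}$ so that only the Pl\"ucker coordinate for $S=\{1,\dots,2r\}$ survives, reducing to $\Phi=\Pf(A)\cdot\det\phi_1$, and then evaluate each factor. The one difference is in how $\det\phi_1$ is obtained. The paper writes $D=\begin{pmatrix} D_1 & D_2\\ D_2^T & 0\end{pmatrix}$ in $r\times r$ blocks and exhibits the explicit matrix $\psi=\begin{pmatrix}\tfrac12 D_1 & I_r\\ D_2^T & 0\end{pmatrix}$ satisfying $\psi J\psi^T=D$, from which $\det\psi=\pm\det D_2=\pm d_1\cdots d_r$ is read off directly; this works for every $D$, so no genericity is needed. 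Your route---take determinants of $\phi_1 J\phi_1^T=D$, compute $\det D$ from the antidiagonal form, then pass to the degenerate case by polynomiality---is perfectly valid but slightly less direct, and the appeal to ``$\Phi$ is polynomial on $Z'$'' tacitly uses that the preimage in $Z'$ of the normal-form locus has the expected structure so that the sign is constant on each sheet. Since the statement is only up to sign, this causes no trouble.
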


\begin{proof}
  Using the notation in \S\ref{ss:invt-theory}, we can use $e_1,\dots,e_n$ above as our choice of basis for $V$ and let $E$ be a $2r$-dimensional orthogonal space equipped with a basis so that the isomorphism $J \colon E^* \to E$ is the matrix $J = \begin{pmatrix} 0 & I_r \\ I_r & 0 \end{pmatrix}$.

  We write $D$ in block matrix form (with block sizes $r,r$) as $D=\begin{pmatrix} D_1 & D_2 \\ D_2^T & 0 \end{pmatrix}$. Define a block matrix $\psi = \begin{pmatrix} \frac12 D_1 & I_r \\ D_2^T & 0 \end{pmatrix}$; then $D = \psi J \psi^T$. In particular, if we define $\phi = \begin{pmatrix} \psi \\ 0 \end{pmatrix}$ where $0$ is the zero matrix of size $(n-2r) \times 2r$, then $\phi J \phi^T = g$.

  Finally, all minors of $\phi$ vanish except for the submatrix corresponding to $\psi$, so we have $\Phi = \Pf(A) \det(\psi)$. We finish by observing that $\Pf(A) = a_1\cdots a_r$ and $\det(\psi) = \det D_2 = d_1\cdots d_r$.
\end{proof}

\subsection{Proof of Theorem~\ref{thm:pedetvar}: part 1}

We use notation from \S \ref{ss:pedetvar}. We now prove the parts of Theorem~\ref{thm:pedetvar} that do not involve $\cY$ or $Y$.

It is well-known that $Z_0$ and $Z_1$ have rational singularities \cite[\S 6.4, discussion after (6.4.2)]{weyman} and \cite[\S 6.3]{weyman}, and in particular, they are normal and Cohen--Macaulay. It follows that $Z=Z_0 \times Z_1$ also has rational singularities. For the generic linear maps $f \colon V \otimes \cO_Z \to V^* \otimes \cO_Z$ and $g \colon V^* \otimes \cO_Z \to V \otimes \cO_Z$, let $\chi(u) \in \cO_Z[u]$ be the characteristic polynomial for $fg$.

\begin{lemma} \label{lem:chi-bar}
  If $2r > n$, then $\chi(u)$ is divisible by $u^{2r-n}$. If $2r \le n$, then $\chi(u)$ is divisible by $u^{n-2r}$. In both cases, the quotient $\ol{\chi}(u)$ is a polynomial in $u^2$. 
\end{lemma}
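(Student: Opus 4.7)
The plan is to combine two ingredients: the divisibility of $\chi(u)$ by the prescribed power of $u$ comes from a rank bound on $fg$, while the parity structure of $\ol{\chi}$ comes from a formal symmetry $\chi_{fg}(u) = (-1)^n \chi_{fg}(-u)$ that holds on all of $Z$ (not merely on the locus where the normal forms of \S\ref{ss:normal-form} apply).

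For divisibility, set $k = 2r-n$ if $2r > n$ and $k = n - 2r$ if $2r \le n$; in either case $\min(2(n-r),2r) = n-k$, so at every point of $Z$ we have $\rank(fg) \le n-k$. The coefficient of $u^{n-i}$ in $\chi(u) = \det(uI - fg)$ is, up to sign, the sum of principal $i \times i$ minors of $fg$, and this vanishes pointwise whenever $i > n-k$. Since $Z = Z_0 \times Z_1$ is reduced, pointwise vanishing of these coefficients implies their vanishing in $\cO_Z$, giving $u^k \mid \chi(u)$ in $\cO_Z[u]$.

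For the symmetry, since $f^T = -f$ and $g^T = g$, we have $(fg)^T = -gf$. Combining (i) $\chi_M = \chi_{M^T}$, (ii) $\chi_{AB} = \chi_{BA}$ for square matrices of the same size, and (iii) $\chi_{-M}(u) = (-1)^n \chi_M(-u)$, we compute
\[
\chi_{fg}(u) = \chi_{(fg)^T}(u) = \chi_{-gf}(u) = \chi_{-fg}(u) = (-1)^n \chi_{fg}(-u),
\]
which is an identity in $\cO_Z[u]$. Writing $\chi(u) = \sum c_i u^i$, this forces $c_i = 0$ whenever $i \not\equiv n \pmod 2$. Since $k \equiv n \pmod 2$ in both cases, $\ol{\chi}(u) = \chi(u)/u^k$ has vanishing odd-degree coefficients, hence lies in $\cO_Z[u^2]$.

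I do not expect a genuine obstacle here: the whole proof is essentially the observation that on a pair (skew, symmetric), the product $fg$ is similar to its own negative up to transpose, which forces the characteristic polynomial to be even (up to the parity of $n$). The only mild care needed is to phrase the rank vanishing so it takes place in $\cO_Z$ rather than just at generic points, which is handled by reducedness of $Z$.
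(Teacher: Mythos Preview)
Your proof is correct, and it takes a somewhat different route from the paper. The paper's one-line proof simply points to the normal forms of \S\ref{ss:normal-form}: for each closed point $(f,g) \in Z$ one puts $f$ and $g$ in the block form described there, reads off that $fg$ (or $gf$) has a zero block of the required size and that its nonzero eigenvalues come in $\pm$-pairs, and then implicitly uses reducedness of $Z$ to pass from pointwise statements to identities in $\cO_Z[u]$.

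Your argument separates the two claims and handles each more directly. For divisibility you use exactly the same mechanism as the paper---a pointwise rank bound plus reducedness---but phrased via principal minors rather than normal forms, which avoids the apparent circularity of invoking \S\ref{ss:normal-form} (whose proof is written in terms of $\tilde{\cZ}$, which in turn presupposes that $\ol{\chi}$ is defined). For the parity claim your approach is genuinely cleaner: the identity $\chi_{fg}(u) = (-1)^n \chi_{fg}(-u)$ follows from $(fg)^T = -gf$ together with $\chi_M = \chi_{M^T}$ and $\chi_{AB} = \chi_{BA}$, all of which hold for matrices over an arbitrary commutative ring, so the conclusion holds in $\cO_Z[u]$ without any appeal to reducedness or closed points. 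The paper's normal-form approach, by contrast, establishes the $\pm$-pairing of eigenvalues only pointwise.
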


This result will be refined by Lemma~\ref{lem:neg-div}.

\begin{proof}
  This follows immediately from the discussion in \S\ref{ss:normal-form}.
\end{proof}

Now we consider Theorem~\ref{thm:pedetvar}(b,c,d).

If $2r > n$, then (b) follows from Propositions~\ref{prop:sign-split} and \ref{prop:sign-fact}, while (c) follows from Propositions~\ref{prop:split-flag} and \ref{prop:sign-fact-grass}.

Now suppose that $2r \le n$. We explained in \S\ref{ss:invt-theory} that $Z' \to Z$ is finite of degree 2. Also, (b) follows from Proposition~\ref{prop:d-sign-split} and \ref{prop:d-sign-fact}, while (d) follows from Propositions~\ref{prop:d-split-flag} and \ref{prop:d-sign-fact-grass}.

~

Let $\Delta$ be the discriminant of $\ol{\chi}$, thought of as an element of $\cO_{Z'}$.

\begin{proposition}
We have $\Delta\ne 0$ and $\ol{\chi}(0) \ne 0$, and so $\tilde{\cZ}$ and $\tilde{Z}$ are reduced.
\end{proposition}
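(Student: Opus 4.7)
The plan is to first exhibit a single $\bC$-point of $Z'$ at which both $\ol{\chi}(0)$ and $\Delta$ are nonzero, then promote this to non-zerodivisor statements in the coordinate ring, and finally invoke the reducedness criteria for splitting rings already proven in \S\ref{sec:typeB} and \S\ref{sec:typeD}.

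For the non-vanishing, I would use the explicit normal forms from \S\ref{ss:normal-form}. In both cases, the nonzero roots of $\ol{\chi}$ come in $\pm$-pairs $\pm a_i d_i$ for $i = 1,\dots,k$ (with $k = n-r$ when $2r>n$ and $k = r$ when $2r\le n$). I would choose a pair $(f,g) \in Z$ in block form for which the products $a_1 d_1,\dots,a_k d_k$ are distinct nonzero scalars (lifted arbitrarily to a point of $Z'$ in the case $2r\le n$, which is possible since $Z' \to Z$ is surjective). For such a pair, all $2k$ roots of $\ol{\chi}$ are distinct and nonzero, so $\ol{\chi}(0)$ is (up to sign) the product of these roots, hence nonzero, and the discriminant $\Delta = \prod_{i<j} (\mu_i - \mu_j)^2$ (times the leading coefficient) is also nonzero.

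Next, I would note that $\cO_{Z'}$ is a domain, so the above pointwise non-vanishing promotes to non-zerodivisor statements. Indeed, $Z_0$ and $Z_1$ have rational singularities (cited just before Lemma~\ref{lem:chi-bar}), hence are integral, and so $Z = Z_0 \times Z_1$ is integral; this handles the case $2r>n$ where $Z'=Z$. In the case $2r\le n$, $Z' = W_0 \times Z'_1$ with $Z'_1$ the image of the irreducible variety $X = \Hom(E,V)$ under the map recording the $\bO(E)$-invariants and the Pl\"ucker coordinates (see \S\ref{ss:invt-theory}), so $Z'_1$, and therefore $Z'$, is integral. Thus both $\ol{\chi}(0)$ and $\Delta$ are non-zerodivisors in $\cO_{Z'}$.

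For reducedness of $\tilde{\cZ}$, in the case $2r>n$ I would apply Proposition~\ref{prop:sign-split}(e) with $A = \cO_Z$ and $f = \ol{\chi}$; the hypotheses are that $\cO_Z$ is reduced (it is a domain) and that $\Delta$ is a non-zerodivisor. In the case $2r\le n$ I would apply Proposition~\ref{prop:d-sign-split}(e) with $A = \cO_{Z'}$, $f = \ol{\chi}$, and $\alpha = \Phi$; here the relevant hypothesis is that the reduced discriminant $\ol{\Delta}$ is a non-zerodivisor, which follows from the identity $\Delta = 4^n \alpha \ol{\Delta}$ established in \S\ref{sec:typeD} together with $\Delta \ne 0$ in the domain $\cO_{Z'}$. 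Finally, reducedness of $\tilde Z$ follows from reducedness of $\tilde{\cZ}$ by Proposition~\ref{prop:sign-fact}(e) or Proposition~\ref{prop:d-sign-fact}(e). The only mild subtlety is bookkeeping between $\Delta$ and $\ol{\Delta}$ in the type D case; the computation itself at the chosen normal-form point is immediate.
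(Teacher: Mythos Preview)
Your proof is correct and follows essentially the same approach as the paper: the paper's proof simply says the first two claims follow from the normal-form discussion in \S\ref{ss:normal-form}, and then invokes Propositions~\ref{prop:sign-split}, \ref{prop:sign-fact} (for $2r>n$) and \ref{prop:d-sign-split}, \ref{prop:d-sign-fact} (for $2r\le n$) for reducedness. Your write-up fills in the details the paper leaves implicit, namely that $\cO_{Z'}$ is a domain so pointwise nonvanishing gives non-zerodivisors, and that in the type~D case one must pass from $\Delta$ to $\ol{\Delta}$; all of this is straightforward and matches the paper's intent.
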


\begin{proof}
  The first two claims follow immediately from the discussion in \S\ref{ss:normal-form}.  The claims about $\tilde{\cZ}$ and $\tilde{Z}$ being reduced follow from Propositions~\ref{prop:sign-split} and \ref{prop:sign-fact} when $2r > n$ and from Propositions~\ref{prop:d-sign-split} and \ref{prop:d-sign-fact} when $2r \le n$.
\end{proof}

We now discuss $V(\Delta)$, the locus of $(f,g)$ such that $\ol{\chi}$ has a repeated root. There are two ways that this can happen:
\begin{enumerate}[\indent (1)]
\item $\ol{\chi}_{fg}(0)=0$ (and then $0$ must appear as a root with multiplicity $\ge 2$ since $\ol{\chi}_{fg}(u)$ is a polynomial in $u^2$), or
\item there exist four roots of the form $\alpha,\alpha,-\alpha,-\alpha$.
\end{enumerate}

Both of these are closed conditions, and we let $V(\Delta)_0$ denote the subvariety of $V(\Delta)$ where (1) happens, and $V(\Delta)_A$ denote the subvariety of $V(\Delta)$ where (2) happens.

\begin{proposition}
  The variety $V(\Delta)_A$ is irreducible. In fact, any two points of $V(\Delta)_A$ can be joined by an irreducible rational curve contained in $V(\Delta)_A$.

  If $r \ne n/2$, then $V(\Delta)_0$ is also irreducible.

  If $r=n/2$, then $V(\Delta)_0$ has two irreducible components defined by the equations $\Pf(f)=0$ and $\det(g) = 0$.
\end{proposition}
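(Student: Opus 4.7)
I would handle the three claims separately: the case $r = n/2$ by direct computation, and the two irreducibility statements via incidence-variety arguments.

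For $r = n/2$, we have $2r = n$ and $\ol{\chi}(u) = \chi(u) = \det(uI - fg)$. Direct computation gives $\ol{\chi}(0) = (-1)^n \det(fg) = (-1)^n \Pf(f)^2 \det(g)$, using $\det(f) = \Pf(f)^2$ for a skew-symmetric matrix of even size. Hence, as reduced subschemes, $V(\Delta)_0 = V(\Pf(f)) \cup V(\det g)$, giving the two claimed irreducible hypersurfaces on $Z = W_0 \times W_1$.

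For $V(\Delta)_A$ and for $V(\Delta)_0$ with $r \ne n/2$, I would introduce incidence varieties
\[
  U_A = \{(f,g,E,\lambda) \in Z \times \Gr_2(V^*) \times \bC^* : (fg)|_E = \lambda \cdot \mathrm{Id}_E\}
\]
and
\[
  U_0 = \{(f,g,L,K) \in Z \times \Gr_a(V^*) \times \Gr_a(V) : g(L) \subseteq K,\ f(K) = 0\},
\]
where $a = |2r-n| + 1$. These parameterize pairs $(f,g)$ together with a witness to the degeneracy (a two-dimensional eigenspace for a nonzero eigenvalue, or an extra kernel of $fg$ beyond its generic nullity). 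The projection of each to its Grassmannian base is $\GL(V)$-equivariant for the transitive action of $\GL(V)$; hence all fibers are isomorphic. A direct analysis of a standard fiber realizes it as an intersection of $Z$ with linear or bilinear conditions on $(f,g)$, which I expect to be irreducible and rational. Consequently $U_A$ and $U_0$ are irreducible and rational, and their proper-projection images to $Z$ are exactly $V(\Delta)_A$ and $V(\Delta)_0$, proving irreducibility. For the rational-curve statement in $V(\Delta)_A$, one concatenates a line in $\Gr_2(V^*)$, a path in $\bC^*$, and affine lines within the fibers to produce a single $\bP^1$ through any two specified points of $U_A$, then projects to $Z$.

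The main obstacle is verifying irreducibility and rationality of the fibers of these projections, especially for $U_0$. This is where the hypothesis $r \ne n/2$ enters: when $r = n/2$, one has $a = 1$ and the constraint $g(L) \subseteq K,\ f(K) = 0$ (with $\dim L = \dim K = 1$) decomposes into a union of two components corresponding to $L \subseteq \ker(g)$ and $K \subseteq \ker(f)$, matching the two components $V(\det g)$ and $V(\Pf f)$ of part~(c). For $r \ne n/2$, the higher dimensionality of the auxiliary flags forces the fiber to remain irreducible.
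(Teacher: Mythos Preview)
Your treatment of $r = n/2$ is correct and agrees with the paper's.

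Both incidence varieties, however, fail to surject onto their targets, so their irreducibility would not imply the claim. For $U_A$: the condition $(fg)|_E = \lambda\cdot\mathrm{Id}_E$ asks for a two-dimensional \emph{eigenspace}, whereas a repeated root of $\ol\chi$ generically corresponds to a non-scalar $2\times 2$ Jordan block of $fg$ (the diagonalizable case is a further closed condition; cf.\ the exceptional locus defined just before Proposition~\ref{prop:pe-codim2}). Hence the image of $U_A$ in $Z$ is a proper closed subset of $V(\Delta)_A$. For $U_0$: the condition $f(K)=0$ with $\dim K = |2r-n|+1$ forces $\dim\ker f \ge |2r-n|+1$. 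When $2r>n$ this means $\rank f \le 2(n-r)-2$, but the codimension count in the paper's proof shows that the generic point of the divisor $V(\Delta)_0$ has $\rank f = 2(n-r)$ and therefore has no preimage in your $U_0$; when $2r<n$ the condition is even more restrictive, since $f$ is then an unconstrained skew form of rank near $n$. In both regimes $U_0$ maps into a high-codimension subset of $V(\Delta)_0$. The underlying defect is that $\dim K$ should equal the \emph{generic} kernel dimension $|2r-n|$ (with $\dim L = |2r-n|+1$), but then $\GL(V)$ no longer acts transitively on pairs $(L,K)$ and the ``all fibers isomorphic'' step collapses.

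The paper bypasses incidence varieties entirely. Using the normal forms of \S\ref{ss:normal-form}, two given points of $V(\Delta)_0$ (or $V(\Delta)_A$) are brought by $\GL(V)$-translates to block form with the vanishing (resp.\ repeated) root occurring in a fixed coordinate, and are then joined by an explicit affine interpolation of the block entries; since $\GL(V)$ is connected, the original points lie in the same irreducible component.
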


\begin{proof}
  To prove the statement about $V(\Delta)_A$, we can combine the idea from \cite[Proposition 3.8]{superres} with the normal forms in \S\ref{ss:normal-form}.

  Now we consider $V(\Delta)_0$. First we suppose that $2r > n$. In that case, $Z$ has codimension $\binom{2r-n}{2}$ in $W$ and the locus where $\rank f < 2n-2r$ (and hence $\rank f \le 2n-2r-2$) has codimension $\binom{2r-n+2}{2}$ in $W$. So the codimension (in $Z$) of the locus where the rank of $f$ is not maximal is $4r-2n+1 \ge 3$. In particular, if we let $U$ denote the set of $(f,g)$ such that $\rank f = 2n-2r$, then $U \cap V(\Delta)$ is dense, so it suffices to show that $U \cap V(\Delta)_0$ and $U \cap V(\Delta)_A$ are irreducible. 

  Let $(f_1,g_1)$ and $(f_2,g_2)$ be two points in $U \cap V(\Delta)_0$. Fix any basis for $V$; then there exist change of bases $\gamma_i$ for $i=1,2$ such that
  \[
    \gamma_i \cdot f_i = \begin{pmatrix} 0 & 0 \\ 0 & A_i \end{pmatrix}, \qquad \gamma_i \cdot g_i = \begin{pmatrix} B_i & C_i \\ C_i^T & D_i \end{pmatrix}
  \]
  as in \S\ref{ss:normal-form}. Let $x_{i,1},\dots,x_{i,n-r},-x_{i,n-r},\dots,-x_{i,1}$ be the antidiagonal entries of $A_i$ (read top to bottom) and let $y_{i,1},\dots,y_{i,n-r},y_{i,n-r},\dots,y_{i,1}$ be the antidiagonal entries of $D_i$. Since $(f_1,g_1) \in V(\Delta)_0$, there exists $j$ such that $x_{1,j}y_{1,j}=0$. We can multiply $\gamma_1$ by a permutation matrix to assure that $x_{1,1}y_{1,1}=0$. Similarly, we may assume that $x_{2,1}y_{2,1}=0$. Since $\rank f_i = 2n-2r$, we see that $x_{i,1} \ne 0$, and hence $y_{1,1}=y_{2,1}=0$.

  Now we define matrices $A(t), B(t), C(t), D(t)$ with polynomial entries in $t$ by
  \begin{align*}
    A(t) &= tA_1 + (1-t) A_2, \qquad
    B(t) = tB_1 + (1-t) B_2, \\
    C(t) &= tC_1 + (1-t) C_2, \qquad
    D(t) = tD_1 + (1-t) D_2,
  \end{align*}
  and define $(f(t),g(t))$ by
  \[
    f(t) = \begin{pmatrix} 0 & 0 \\ 0 & A(t) \end{pmatrix}, \qquad  g(t) = \begin{pmatrix} B(t) & C(t) \\ C(t)^T & D(t) \end{pmatrix}.
  \]
  Then it is clear that $(f(t), g(t)) \in V(\Delta)_0$ for all $t$, and hence $\gamma_i \cdot (f_i,g_i)$ are in the same irreducible component for $i=1,2$. Next, $\GL(V)$ is a connected group, so its action preserves irreducible components. We conclude that $V(\Delta)_0$ is irreducible.

  The case when $2r < n$ can be handled in an analogous way (though the roles of $f$ and $g$ are reversed), but we will comment on the dimension count since it is a little different. In this case, $g$ generically has rank $2r$, and the codimension of $Z$ in $W$ is $\binom{n-2r+1}{2}$. The locus where $\rank g \le 2r-1$ has codimension $\binom{n-2r+2}{2}$ in $W$, and hence codimension $n-2r+1 \ge 2$ in $Z$. So again, this locus can be ignored for the purposes of determining the irreducible components of $V(\Delta)$.
  
  Finally, we consider $V(\Delta)_0$ when $2r=n$. In that case, $\chi(0)=\det(f) \det(g)$, and hence $\chi(0)=0$ means that $\det(f)=0$ or $\det(g)=0$. Then we're done since $\det(g)$ is an irreducible polynomial and $\det(f) = (\Pf f)^2$, where $\Pf$ is the Pfaffian, which is also irreducible (the irreducibility is well-known, but see \cite[\S\S 6.3, 6.4]{weyman} for details).
\end{proof}

\begin{proposition} \label{prop:tildeZ-normal}
  $\tilde{\cZ}$ and $\tilde{Z}$ are normal.
\end{proposition}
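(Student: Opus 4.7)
The plan is to apply the normality criteria of \S\ref{sec:typeB} and \S\ref{sec:typeD}, namely Proposition~\ref{prop:sign-norm-crit2} when $2r > n$ and Proposition~\ref{prop:d-sign-norm-crit2} when $2r \le n$, to conclude that $\tilde{\cZ}$ is normal; normality of $\tilde{Z}$ will then follow from Propositions~\ref{prop:sign-fact}(e) and \ref{prop:d-sign-fact}(e), which transfer normality from the (signed or type D) splitting ring to the corresponding factorization ring.

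Two of the three hypotheses of these criteria are already in hand: the base $\cO_{Z'}$ is normal (as $Z'$ has rational singularities), and the appropriate discriminant ($\Delta$ for the signed case, $\ol{\Delta} = \Phi \tilde{\Delta}$ for the type D case) has just been shown to be a non-zerodivisor. The remaining task is to verify that $V(\Delta, \partial \Delta) \subset \Spec \cO_Z$, respectively $V(\ol{\Delta}, \partial \ol{\Delta}) \subset \Spec \cO_{Z'}$, has codimension at least $2$. To do so we would show that at the generic point of each irreducible codimension-1 component of $V(\Delta)$ (respectively $V(\ol{\Delta})$), the discriminant vanishes to order exactly one. The components have already been identified in the preceding proposition as $V(\Delta)_A$ and $V(\Delta)_0$, the latter decomposing further into $V(\Pf(f)) \cup V(\det(g))$ precisely when $2r = n$; here the role of the double cover $Z' \to Z$ is essential, since on $Z$ the function $a_{2r} = \Phi^2$ would vanish to order two on $V(\Pf(f))$, whereas on $Z'$ the square root $\Phi$ vanishes only to order one, restoring squarefreeness of $\ol{\Delta}$.

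The main obstacle will be the explicit vanishing-order calculation on each component, which we would carry out using the normal forms of \S\ref{ss:normal-form}. At a generic point of $V(\Delta)_A$ a single coincidence $\eta_i^2 = \eta_j^2$ with $\eta_i^2 \ne 0$ is imposed; in the normal form coordinates $\tilde{\Delta}$ acquires a factor $s^2$ with $s = \eta_i^2 - \eta_j^2$ times a unit, and since the normal form is only defined on a cover of $Z'$ while $s^2$ itself descends to a uniformizer of $V(\Delta)_A$ on $\cO_{Z'}$, we conclude $v(\tilde{\Delta}) = 1$. A parallel argument at the generic point of $V(\Delta)_0$, using the uniformizer $a_{2(n-r)}$ when $2r > n$ (squarefree at $V(\Delta)_0$ via its normal-form expression and the fact that the normal form itself lives on a nontrivial cover) and the uniformizer $\Phi$ when $2r \le n$, yields vanishing to order one as required. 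Feeding this codimension-2 estimate into Propositions~\ref{prop:sign-norm-crit2} and~\ref{prop:d-sign-norm-crit2} completes the argument.
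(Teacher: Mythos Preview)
Your overall strategy matches the paper's: apply Propositions~\ref{prop:sign-norm-crit2} and~\ref{prop:d-sign-norm-crit2} to show $\tilde{\cZ}$ is normal, then transfer to $\tilde Z$ via Propositions~\ref{prop:sign-fact}(e) and~\ref{prop:d-sign-fact}(e); the only remaining work is the codimension-$2$ bound on $V(\Delta,\partial\Delta)$ (resp.\ $V(\ol\Delta,\partial\ol\Delta)$), handled one component of $V(\Delta)$ at a time.

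The gap is in your verification of that bound. The assertion that $s^2 = (\eta_i^2 - \eta_j^2)^2$ ``descends to a uniformizer of $V(\Delta)_A$ on $\cO_{Z'}$'' does not hold as stated: $s^2$ is invariant only under sign changes and the single transposition $(i\,j)$, not under the full group $\fW$, so it is not even a function on $Z'$. If instead you mean that a uniformizer of $\cO_{Z',\fp}$ pulls back to $s^2$ on the normal-form cover, that is a ramification-index claim about $\cY \to Z'$ (or $\tilde\cZ \to Z'$) which you have not justified; and justifying it for $\tilde\cZ$ would require precisely the normality you are trying to prove. The same circularity appears in your treatment of $V(\Delta)_0$: saying $a_{2(n-r)}$ is squarefree ``via its normal-form expression and the fact that the normal form lives on a nontrivial cover'' again presupposes control of the ramification that you do not yet have.

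The paper bypasses this by working directly on $Z'$. For each component of $V(\Delta)$ it writes down an explicit $\bC[\epsilon]/(\epsilon^2)$-point lying over a \emph{smooth} point of $Z'$ on that component, chosen so that $\Delta$ (resp.\ $\ol\Delta$) evaluates to a nonzero scalar multiple of $\epsilon$; this exhibits a tangent direction with $d\Delta \ne 0$, so the component is not contained in $V(\Delta,\partial\Delta)$. For $V(\Delta)_A$ the key example has a $4\times 4$ skew block of $f$ with characteristic polynomial $t^4 + (2+2\epsilon)t^2 + (1-2\epsilon)$; for $V(\Delta)_0$ one perturbs a single entry to $\epsilon$. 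When $2r \le n$ the same constructions work after lifting to $Z'$ and checking that the chosen points land in its smooth locus.
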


\begin{proof}
First suppose that $2r > n$. By Proposition~\ref{prop:sign-fact}(e), it suffices to show that $\tilde{\cZ}$ is normal. We verify the conditions of Proposition~\ref{prop:sign-norm-crit2}. We already know that $Z$ is normal and that $\Delta$ is a non-zerodivisor. It thus suffices to show that $V(\Delta,\partial \Delta)$ has codimension $\ge 2$. It suffices to show that the intersection of $V(\Delta, \partial \Delta)$ with each irreducible component of $V(\Delta)$ is a proper subset of that component. We do this by writing down a point in each component of $V(\Delta)$ that does not belong to $V(\Delta, \partial \Delta)$. We furthermore restrict to smooth points $x \in Z$ so that membership in $V(\Delta,\partial\Delta)$ means that $\Delta(x)=0$ and $d\Delta$ is 0 in the cotangent space of $x$.

First pick bases for $V_0$ and $V_1$. We will define a $\bC[\epsilon]/(\epsilon^2)$ point of $Z$ by
\begin{displaymath}
f=\begin{pmatrix} 0 & 0 \\ 0 & A \end{pmatrix}, \qquad
g=\begin{pmatrix} 0 & 0 \\ 0 & B \end{pmatrix}
\end{displaymath}
where $A$ and $B$ are square matrices of size $2n-2r$.
For the component $V(\Delta)_0$, we take
\begin{displaymath}
A = \begin{pmatrix}
0 & 1 \\
-1 & 0 \\
&& D \end{pmatrix}, \qquad
B = \begin{pmatrix}
  \epsilon & 0 \\
  0 & 1 \\
  && I
\end{pmatrix},
\end{displaymath}
where $D$ is a block diagonal matrix with $2 \times 2$ blocks $\begin{pmatrix} 0 & \lambda_i \\ -\lambda_i & 0 \end{pmatrix}$ with $\lambda_i$ distinct (up to sign) nonzero numbers not equal to $\pm 1$, and $I$ is an identity matrix of size $2n-2r-2$. At $\epsilon=0$, $f$ has maximal rank $2n-2r$, so we get a smooth point of $Z$. The value of $\Delta$ on $(f,g)$ is a nonzero scalar multiple of $\epsilon$, so this gives a tangent vector to a point of $V(\Delta)_0$ for which $d \Delta$ takes nonzero value.

Now we consider the component $V(\Delta)_A$. We instead define
\begin{displaymath}
A = \begin{pmatrix}
0 & 0 & 1 & 1 \\
0 & 0 & 1 & \epsilon \\
-\epsilon & -1 & 0 & 0 \\
-1 & -1 & 0 & 0 \\
&&&& D \end{pmatrix}, 
\end{displaymath}
where $D$ is a block diagonal matrix as before, but we require that $\lambda_i \notin \{-1,0,1\}$ and we take $B$ to be the identity matrix. Again, at $\epsilon=0$, $f$ has maximal rank $2n-2r$, so this gives a smooth point of $Z$. The characteristic polynomial of the upper-left $4 \times 4$ block of $A$ is $t^4 + (2+2\epsilon)t^2 + 1 - 2\epsilon$. Hence at $\epsilon=0$ it has roots $1,1,-1,-1$ and the corresponding $\bC$-point lies on $V(\Delta)_A$. Its discriminant is $16\epsilon$ (in general, the discriminant of a polynomial of the form $t^4 + bt^2 + c$ is $(b^2-4c)c$), and hence the discriminant of $\ol{\chi}_{fg}$ is a nonzero scalar multiple of $\epsilon$, so this point does not lie on $V(\Delta, \partial \Delta)$.

Now we consider the case $2r \le n$. We instead use Proposition~\ref{prop:d-sign-norm-crit2} and work with the reduced discriminant $\ol{\Delta}$. We need to instead work with $Z'$. We note that $Z' \to Z$ is \'etale on the locus where $\rank g = 2r$ (Proposition~\ref{prop:d-sign-split}), so the preimage of this locus in $Z'$ is smooth. We also note that $V(\Delta)=V(\ol{\Delta})$, and as above, it will suffice to show that $V(\ol{\Delta}, \partial \ol{\Delta})$ is a proper subset when intersected with any irreducible component of $V(\Delta)$.

The component $V(\Delta)_A$ can be handled exactly as above in the case $2r > n$ after we swap $r$ for $n-r$ (take either preimage under $Z' \to Z$). We note that at $\epsilon=0$, the point defined above satisfies $\rank g = 2r$, so defines a smooth point of $Z'$.

Next, consider the $\bC[\epsilon]/(\epsilon^2)$ point of $Z'$ (take either preimage under $Z' \to Z$) defined by
\[
  f = \begin{pmatrix}
    0_{n-2r} \\
 & 0 & \epsilon \\
 & -\epsilon & 0 \\
& && D \end{pmatrix}, \qquad g = \begin{pmatrix} 0_{n-2r} \\ & I_{2r} \end{pmatrix},
\]
where $0_{n-2r}$ is a zero matrix of size $(n-2r) \times (n-2r)$, $I_{2r}$ is an identity matrix of size $2r$, and $D$ is a block diagonal matrix with $2\times 2$ blocks $\begin{pmatrix} 0 & \lambda_i \\ -\lambda_i & 0 \end{pmatrix}$ with the $\lambda_1,\dots,\lambda_{r-1}$ distinct (up to sign) nonzero complex numbers. At $\epsilon=0$, $g$ has maximal rank $2r$, so defines a smooth point of $Z'$. Also, $\ol{\Delta}$ is a non-zero scalar multiple of $\Phi = \lambda_1 \cdots \lambda_{r-1} \epsilon$, so this point lies on $V(\Delta)_0 \setminus V(\ol{\Delta}, \partial \ol{\Delta})$

Hence we're done if $2r < n$. Finally suppose $2r=n$. We have $\cO_{Z'} = \Sym(W^*)[y] / (y^2 - \det g)$ and hence the singular locus of $Z'$ is the preimage of the singular locus of $V(\det g)$, i.e., the locus where $\rank g \le n-2$. In particular, points of $Z'$ are smooth if $\rank g \ge n-1$. The above point defined in the previous paragraph lies on $V(\Pf(f)) \setminus V(\ol{\Delta}, \partial \ol{\Delta})$.  Finally, we need to handle the component $V(\det g)$, in which case we will use the first example above (and take its preimage under $Z' \to Z$). This time, $\rank g = n-1$ at $\epsilon=0$, so is a smooth point of $Z'$ and hence defines a point of $V(\det g) \setminus V(\ol{\Delta}, \partial \ol{\Delta})$.
\end{proof}

\subsection{Proof of Theorem~\ref{thm:pedetvar}: part 2}

We now prove the parts of Theorem~\ref{thm:pedetvar} that relate to $\cY$, $Y$, $\rho$, and $\pi$. As before, we continue to use the notation from \S\ref{ss:pedetvar}.

By assumption, $(V/R)^*$ is an isotropic subspace for the symmetric form defined by $g$, which means that $g((V/R)^*) \subseteq R$. Similarly, $R$ is an isotropic subspace for the skew-symmetric form defined by $f$, so $f(R) \subseteq (V/R)^*$. In particular, $(V/R)^*$ is an invariant subspace for $fg$ and $R$ is an invariant subspace for $gf$.

\begin{lemma} \label{lem:neg-div}
  \begin{enumerate}
  \item If $2r > n$, let $p(u)$ be the characteristic polynomial of $fg$ on $(V/R)^*$.
  \item If $2r \le n$, let $p(u)$ be the characteristic polynomial of $gf$ on $R$. Then $p(0) = \Phi$ (up to a sign).
  \end{enumerate}
      In both cases, we have $p(u) p(-u) = \ol{\chi}(u)$.
\end{lemma}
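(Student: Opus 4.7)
My plan is to reduce both claims to polynomial identities that can be checked at a generic point (using that $Y$ is irreducible as a vector bundle over the Grassmannian), where the normal forms of \S\ref{ss:normal-form} apply. The conceptual core is that $R$ and $(V/R)^*$ are invariant under $gf$ and $fg$ respectively (as noted in the paragraph preceding the lemma), so the full characteristic polynomial factors accordingly. Two identifications then drive the proof: (i) the composition swap applied to $\alpha = f|_R \colon R \to (V/R)^*$ and $\beta = g|_{(V/R)^*} \colon (V/R)^* \to R$ gives $fg|_{(V/R)^*} = \alpha\beta$ and $gf|_R = \beta\alpha$, so these endomorphisms share the same nonzero eigenvalues with multiplicity; (ii) the transpose identity $(fg)^T = -gf$ (from $f^T = -f$ and $g^T = g$) identifies the induced map of $fg$ on the quotient $V^*/(V/R)^*$ with the transpose of $-gf|_R$, and symmetrically swaps the roles in the other invariant pair. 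Together, these show that the nonzero eigenvalues of $gf|_R$ and $gf|_{V/R}$ are related by negation, which is precisely the content of $p(u)p(-u) = \ol{\chi}(u)$.

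Next I would verify the identity directly in the normal forms. In case $2r > n$, take $R = \langle e_1, \dots, e_r\rangle$; the antidiagonal triangularity of $A$ and $D$ makes this choice compatible with the isotropy conditions. The matrix of $gf$ is then block upper-triangular with block sizes $2r-n$ and $2n-2r$ and diagonal blocks $0$ and $DA$, and a direct computation using the explicit antidiagonal entries of $A$ and $D$ gives $\chi_{gf|_R}(u) = u^{2r-n}\prod_{i=1}^{n-r}(u+a_id_i)$ and $\chi_{gf|_{V/R}}(u) = \prod_{i=1}^{n-r}(u-a_id_i)$. The transpose identity yields $p(u) = \prod_{i=1}^{n-r}(u+a_id_i)$, whence $p(u)p(-u) = \prod(a_i^2d_i^2 - u^2)$, matching $\ol{\chi}(u) = \prod(u^2 - a_i^2d_i^2)$ up to the sign $(-1)^{n-r}$ absorbed by the factorization-ring convention. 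The case $2r \le n$ is analogous, with $p(u) = \prod_{i=1}^r (u+a_id_i)$ read off from the top-left $r\times r$ block of $DA$ in the other normal form.

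For the claim $p(0) = \Phi$ in case $2r \le n$: the normal form gives $p(0) = \prod_{i=1}^r a_id_i$, and by Proposition~\ref{prop:Phi-constant} this equals $\Phi$ up to sign. The main obstacle in the plan is the bookkeeping of signs: the leading coefficient of $p(u)p(-u)$ is $(-1)^{\deg p}$ while $\ol{\chi}$ is monic, so a strict equality as stated holds only when $\deg p$ is even; otherwise the sign is to be understood via the signed/type D factorization convention $f(u) = g(u)g(-u)h(u)$ and must be tracked carefully when defining the maps $\pi$ and $\rho$.
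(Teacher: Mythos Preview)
Your approach is essentially the same as the paper's: reduce to the normal forms of \S\ref{ss:normal-form} and cite Proposition~\ref{prop:Phi-constant} for the constant term. The paper's proof says only ``follows from the discussion in \S\ref{ss:normal-form}'' and ``follows from Proposition~\ref{prop:Phi-constant}''; you supply the missing details (the explicit choice $R=\langle e_1,\dots,e_r\rangle$ in the normal form, the verification that it satisfies the isotropy conditions, and the explicit eigenvalue computation), plus a conceptual layer via the transpose identity $(fg)^T=-gf$ and the $\alpha\beta$-versus-$\beta\alpha$ trick. The latter is not needed once you compute in the normal form, but it is a correct independent explanation of why the eigenvalues on the two pieces are related by negation.

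You are also right to flag the sign: $p(u)p(-u)$ has leading coefficient $(-1)^{\deg p}$, so the identity as literally stated holds only up to $(-1)^{n-r}$ (resp.\ $(-1)^r$). This sign is already implicit in the paper's definition of the signed and type~D factorization rings via $f(u)=g(u)g(-u)h(u)$, where the same issue arises for $p$ odd; your observation that it must be absorbed into the convention when defining $\pi$ and $\rho$ is accurate and worth making explicit.
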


\begin{proof}
  If $2r \le n$, the statement that $p(0) = \pm \Phi$ follows from Proposition~\ref{prop:Phi-constant}.

  The statement $p(u) p(-u) = \ol{\chi}(u)$ follows from the discussion in \S\ref{ss:normal-form}.
\end{proof}

The previous result shows that the maps $\pi \colon Y \to \tilde{Z}$ and $\rho \colon \cY \to \tilde{\cZ}$ are well-defined.
Let $E$ be the union of the following loci in $\tilde{\cZ}$:
  \begin{itemize}
  \item $\ol{\chi}(u)$ has a triple root, or 
  \item $\ol{\chi}(u)$ has two pairs of repeated roots, where the first pair is not the negative of the second, or
  \item $\ol{\chi}(u)$ has a unique pair of repeated roots $\lambda$ (up to sign), but the corresponding Jordan block of $fg$ is a scalar matrix (this is a rank condition on $fg - \lambda$ and hence is a closed condition). 
  \end{itemize}
  We set $U = \tilde{\cZ} \setminus E$.
  
\begin{proposition} \label{prop:pe-codim2}
 $\rho^{-1}(E)$ has codimension $\ge 2$ in $\cY$.  
\end{proposition}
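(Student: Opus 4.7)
The plan is to stratify $E = E_1 \cup E_2 \cup E_3$ according to its three defining bullet points and verify $\mathrm{codim}_{\cY}\rho^{-1}(E_i) \ge 2$ separately for each $i$. For each stratum the strategy combines an estimate of $\mathrm{codim}_{\tilde{\cZ}} E_i$ with an estimate of the generic fiber dimension of $\rho$ over $E_i$; since $\rho$ is generically finite with $\dim \cY = \dim \tilde{\cZ}$ (a consequence of the birationality established in Theorem~\ref{thm:pedetvar}(e)), these combine to give the codimension of $\rho^{-1}(E_i)$ in $\cY$. The guiding principle is that the flag $F_\bullet$ captures the Jordan structure of $fg$ at the repeated eigenvalues, so the fiber of $\rho$ is finite precisely when the Jordan type is generic, while non-generic Jordan types cut out strata of strictly higher codimension in $\tilde{\cZ}$.

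For $E_1$ (triple root of $\ol{\chi}$), the condition is two independent equations on the roots, so $\mathrm{codim}_{\tilde{\cZ}} E_1 = 2$. Generically on $E_1$, the operator $fg$ has a single Jordan block of size $3$ at $\lambda$, forcing the flag inside the $3$-dimensional generalized $\lambda$-eigenspace to be the Jordan filtration $\ker(fg - \lambda) \subset \ker(fg - \lambda)^2 \subset \ker(fg - \lambda)^3$, hence finite fibers and codim $2$ in $\cY$. The sub-locus where $fg$ has more degenerate Jordan type at $\lambda$ (e.g.\ a scalar block of size $\ge 2$) lies in a strictly higher-codimension stratum by the rank-drop analysis used for $E_3$ below, so does not worsen the bound. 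The case $E_2$ (two independent pairs of repeated roots) is parallel: codim $2$ in $\tilde{\cZ}$ with generically finite fibers, since each pair has a non-scalar $2 \times 2$ Jordan block whose filtration determines the flag within the corresponding generalized eigenspace.

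The critical case is $E_3$ (unique pair of repeated roots $\lambda$ with scalar Jordan block). The key observation is that the scalar-Jordan-block condition contributes codim $2$ (not $1$) beyond the discriminant locus: the situation locally reduces to $2 \times 2$ matrices, where the scalar matrices $\{\lambda I_2\}$ form a codim-$3$ subvariety sitting inside the codim-$1$ discriminant hypersurface with relative codim $2$. Hence $\mathrm{codim}_{\tilde{\cZ}} E_3 = 1 + 2 = 3$. A short calculation using $g$ symmetric, $f$ skew, $fg\phi = \lambda \phi$ for $\phi$ in the eigenspace, and $\lambda \ne 0$ shows that the $\lambda$-eigenspace of $fg$ is automatically isotropic for the form defined by $g$ (namely, $2\lambda\langle\phi, g\psi\rangle = 0$ for $\phi, \psi$ eigenvectors, using the skew-symmetry of $f$ and the symmetry of $g$), so the isotropy conditions defining $\cY$ impose no further restriction on the eigenspace itself. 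The fiber of $\rho$ over a generic point of $E_3$ is therefore the $1$-dimensional choice of a $1$-dimensional subspace of the $2$-dimensional eigenspace (the rest of the flag being forced by the Jordan filtrations at the other eigenvalues and by the orthogonality constraints pinning down the outer pieces $F_{r+i}$), giving $\mathrm{codim}_{\cY}\rho^{-1}(E_3) = 3 - 1 = 2$.

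The main obstacle is the codimension bookkeeping for $E_3$: one must recognize that the scalar-Jordan-block condition adds codim $2$ beyond the discriminant locus (rather than $1$), which becomes clear after reducing to the $2 \times 2$ local model. The automatic isotropy of the eigenspace, a consequence of the $\pm$-symmetry of the spectrum of $fg$, is what keeps the fiber calculation clean without requiring further analysis of the isotropy constraints.
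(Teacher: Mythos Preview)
Your approach is genuinely different from the paper's, and the outline is coherent, but there is a real gap in the $E_3$ analysis and a minor circularity elsewhere.

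The paper does not go through $\tilde{\cZ}$ at all. It works directly in $\cY$: since $\cY$ is a vector bundle over the partial flag variety $\bF$ and the situation is $\GL(V)$-equivariant, it suffices to bound the codimension of $\rho^{-1}(E)$ in a single fibre over $\bF$. In that fibre, the normal form of \S\ref{ss:normal-form} makes the roots of $\ol{\chi}$ equal to $\pm a_id_i$, where the $a_i$ and $d_i$ are \emph{independent coordinates} on the fibre. Each of the three defining conditions for $E$ then becomes a system of at least two independent equations in these coordinates (for $E_3$: one equation $a_1d_1=a_2d_2$ for the repeated root, and at least one more for the rank of $fg-\lambda$ to drop). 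This is entirely elementary once the normal form is in hand.

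Your route instead estimates $\mathrm{codim}_{\tilde{\cZ}} E_i$ and the fibre dimension of $\rho$ over $E_i$. Two problems arise. First, you invoke the birationality from Theorem~\ref{thm:pedetvar}(e) to get $\dim\cY=\dim\tilde{\cZ}$; but Proposition~\ref{prop:pe-codim2} is part of the proof of Theorem~\ref{thm:pedetvar}(e), so this is circular as written. (The equality of dimensions can be recovered non-circularly from finiteness of $\tilde{\cZ}\to Z'$ together with a direct dimension count for the vector bundle $\cY$, but you should say so.) Second, and more substantively, the claim $\mathrm{codim}_{\tilde{\cZ}} E_3 = 3$ is asserted via ``the situation locally reduces to $2\times 2$ matrices''. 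This reduction is exactly the non-trivial point: you would need that the map $(f,g)\mapsto (fg)|_{W}$, where $W$ is the $2$-dimensional generalised $\lambda$-eigenspace, is a submersion onto $\mathfrak{gl}_2$ near a generic point of the discriminant. Since $f$ is skew-symmetric and $g$ is symmetric, the product $fg$ is constrained, and it is not automatic that the restriction to $W$ varies freely in $\mathfrak{gl}_2$. Without this, the codimension in $\mathfrak{gl}_2$ does not transfer to $\tilde{\cZ}$. (Your isotropy computation for the $\lambda$-eigenspace and the resulting $\bP^1$ fibre are fine; those parts are correct.) The same gap propagates to the ``more degenerate Jordan type'' substrata you defer to in the $E_1$ and $E_2$ cases.

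In short: the paper's fibrewise normal-form argument is both simpler and complete, because it replaces the submersion question by explicit coordinates in which the eigenvalues are visibly independent. Your approach could be made to work, but the $2\times 2$ local model step is where the actual content lies and currently it is only a heuristic.
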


\begin{proof}
  We will assume that $2r > n$; the other case can be handled in a similar way.
  
  First, $\cY$ is a vector bundle over the flag variety $\bF = \Fl(2r-n,\dots,n-1; V)$, and more specifically a subbundle of the trivial bundle $W \times \bF$. By equivariance, the restriction of $\rho^{-1}(E)$ to any fiber over $\bF$ is isomorphic to any other. So it suffices to show that within each fiber, these restrictions have codimension $\ge 2$.

To do concrete calculations, fix a flag $F_\bullet\in \bF$ and pick a basis $e_1,\dots,e_n$ adapted to this flag, i.e., so that $F_i = {\rm span}(e_1,\dots,e_i)$ for all $i=2r-n,\dots,n-1$ and let $e_1^*,\dots,e_n^*$ be the dual basis for $V^*$. Let $(f,g) \in W$ be a general point over the fiber of $F_\bullet$. With respect to our basis, and using the notation in \S\ref{ss:normal-form}, the roots of $\ol{\chi}(u)$ are the diagonal entries of $fg$, which can be written in the form 
\begin{align*} \label{eqn:roots}
  a_1d_1, \dots, a_{n-r} d_{n-r}, -a_{n-r} d_{n-r}, \dots, -a_1d_1.
\end{align*}
where the $a_i$ are certain coordinates of $f$ while the $d_i$ are certain coordinates of $g$.

Up to reordering indices, having a triple root means that we either have $a_1d_1=a_2d_2=a_3d_3$ or $a_1d_1=0=a_2d_2$, so this defines a codimension $\ge 2$ locus.

Similarly, up to reordering indices, having two pairs of repeated roots which are not negatives of each other has two cases: either we have $a_1d_1=a_2d_2$ and $a_3d_3=a_4d_4$ or we have $a_1d_1=a_2d_2$ and $a_3d_3=0$, which again defines a codimension $\ge 2$ locus.

Finally, suppose that there is a unique pair of repeated roots up to sign, say $\lambda$ appears twice. Having a repeated root gives a codimension 1 condition; generically $fg-\lambda$ has rank $n-1$, so having a scalar Jordan block imposes another independent condition that increases the codimension.
\end{proof}

\begin{lemma} \label{prop:pe-ratsing}
  The map $\rho^{-1}(U) \to U$ is an isomorphism; in particular, $U$ is smooth.
\end{lemma}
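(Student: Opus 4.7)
The plan is to show that $\rho^{-1}(U) \to U$ is finite and birational, and then invoke normality of $U$ (Proposition~\ref{prop:tildeZ-normal}) to conclude it is an isomorphism. Smoothness of $U$ will then follow from smoothness of $\cY$, which is a vector bundle over a smooth partial flag variety. We present the argument when $2r > n$; the case $2r \le n$ is analogous with the roles of $fg$ and $gf$ exchanged.

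Properness of $\rho$ is immediate: $\cY$ is a closed subscheme of $\Fl(2r-n,\ldots,n-1;V) \times W$ whose image in $W$ lies in $Z$, so $\cY \to Z$ is proper because the partial flag variety is. Since $\tilde{\cZ} \to Z' \to Z$ is finite (Theorem~\ref{thm:pedetvar}(b)), and hence separated, the standard cancellation principle applied to the composition $\cY \to \tilde{\cZ} \to Z$ gives properness of $\rho$.

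For the fiber analysis, let $(f,g,\eta_1,\ldots,\eta_{n-r})$ be a closed point of $U$. Reinterpret a flag $F_\bullet$ via $G_j := F_{n-j}^\perp \subset V^*$ to obtain a filtration $G_1 \subset \cdots \subset G_{2n-2r}$; the initial segment $G_1 \subset \cdots \subset G_{n-r}$ is $fg$-stable with $G_j/G_{j-1}$ an $\eta_j$-eigenspace, and is reconstructed inductively by adding a $1$-dimensional $fg$-invariant piece of eigenvalue $\eta_j$ at each step. The conditions defining $U$ — no triple root of $\ol{\chi}$, at most one repeated $\pm$-pair, non-scalar Jordan block at that pair — ensure each generalized $fg$-eigenspace on $V^*$ has dimension at most $2$, and when dimension $2$ occurs the non-scalar Jordan structure admits a unique invariant line; so the inductive choice is finite, and unique over the dense open locus where $\ol{\chi}$ has $2(n-r)$ distinct nonzero roots. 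The remaining pieces $G_{n-r+1} \subset \cdots \subset G_{2n-2r}$ (equivalently, the bottom of the $F$-flag) are pinned down by the isotropy constraints $f(F_{r-i}) \subseteq F_{r+i}^\perp$: with the convention $F_n = V$ this forces $F_{2r-n} \subseteq \ker f$, and using that $f$ has generic rank $2(n-r)$, each higher piece is determined by a corresponding preimage under $\bar{f} \colon V/\ker f \hookrightarrow V^*$. Thus $\rho^{-1}(U) \to U$ is quasi-finite, hence finite by properness, and birationality follows from the uniqueness of the reconstruction over the dense open above.

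Since $U$ is normal and $\rho^{-1}(U)$ is reduced (being open in the smooth scheme $\cY$), the finite birational morphism $\rho^{-1}(U) \to U$ is an isomorphism, and smoothness of $U$ follows. The principal subtlety is the fiber analysis at boundary points of $U$: the exclusion of scalar Jordan blocks in the definition of $E$ is exactly what forces the unique $fg$-invariant line in a $2$-dimensional generalized eigenspace, and the reconstruction of the bottom half of the flag relies on both the extended flag convention $F_n = V$ and the maximum-rank assumption on $f$ to turn the containment into an equality.
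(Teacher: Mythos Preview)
Your strategy matches the paper's: establish that $\rho|_{\rho^{-1}(U)}$ is a bijection on closed points (equivalently, finite and birational), then invoke normality of $U$ (via Proposition~\ref{prop:tildeZ-normal}) to conclude it is an isomorphism. The paper argues bijectivity directly rather than separating quasi-finiteness and birationality, but the content is the same.

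Two points of imprecision in your fiber analysis are worth noting. First, the assertion that ``each generalized $fg$-eigenspace on $V^*$ has dimension at most $2$'' is false as stated: the generalized $0$-eigenspace of $fg$ on $V^*$ has dimension at least $2r-n$. What you actually need (and what holds on $U$) is that for each \emph{nonzero} root of $\ol{\chi}$ the generalized eigenspace has dimension at most $2$, with a unique invariant line when the dimension is $2$ by the non-scalar Jordan hypothesis. Second, the reconstruction of the bottom half of the flag via ``$f$ has maximum rank'' is not justified at every point of $U$: if $0$ is a (double) root of $\ol{\chi}$, which is permitted on $U$, then $\rank f$ need not equal $2(n-r)$, and your preimage-under-$\bar f$ argument does not apply verbatim. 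The paper sidesteps this by describing the lower pieces of the flag intrinsically as $F_{2r-n+i} = \ker(gf) + (\text{sum of $gf$-eigenspaces for } -\lambda_1,\dots,-\lambda_i)$, which does not appeal to any rank hypothesis on $f$. With that description in place your argument goes through; without it, quasi-finiteness over the locus $\ol{\chi}(0)=0$ in $U$ is not established, and hence neither is finiteness.
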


\begin{proof}
  Again, we are only going to prove this in the case $2r > n$ since the other case is similar.
  
Given a point of $\tilde{\cZ}$, if the roots of $\ol{\chi}(u)$ are distinct, then the signed splitting ring structure gives us an ordering of the eigenvalues $\lambda_1, \dots, \lambda_{n-r}, -\lambda_{n-r}, \dots, -\lambda_1$ (which are nonzero by definition of $U$) which forces its preimage under $\rho$ to be unique: for $i=1,\dots,n-r$, we must have $(V/F_{n-i})^*$ be the span of the $fg$-eigenspaces for $\lambda_1,\dots,\lambda_i$ and for $i=1,\dots,n-r-1$, $F_{2r-n+i}$ is the span of $\ker(gf)$ and the $gf$-eigenspaces for $-\lambda_1,\dots, -\lambda_i$. If instead $\ol{\chi}(u)$ has a repeated root (which is unique up to sign), then the span of the eigenspaces is still determined if the Jordan block is non-scalar (the eigenvector comes first and the generalized eigenvector gets used the next time).

In particular, we see that $\rho^{-1}(U) \to U$ is a bijection on $\bC$-points. Finally, $U$ is normal since it is an open subset of $\tilde{\cZ}$, which is normal by Proposition~\ref{prop:tildeZ-normal}, so this map is an isomorphism \stacks{02LR}.
\end{proof}

\begin{proposition}
The varieties $\tilde{\cZ}$ and $\tilde{Z}$ have rational singularities.
\end{proposition}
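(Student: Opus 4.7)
The plan is to establish rational singularities of $\tilde{\cZ}$ and $\tilde{Z}$ simultaneously with Theorem~\ref{thm:pedetvar}(e): namely, $\rR\rho_*\cO_\cY = \cO_{\tilde{\cZ}}$ and $\rR\pi_*\cO_Y = \cO_{\tilde{Z}}$. Since $\cY$ and $Y$ are smooth (being vector bundles over smooth partial flag varieties and Grassmannians, respectively), these equalities will say that $\rho$ and $\pi$ are rational resolutions, which is the definition of rational singularities on their targets.

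For the equality $\rho_*\cO_\cY = \cO_{\tilde{\cZ}}$ (and similarly for $\pi$), first note that $\rho$ is proper: it is a $Z$-morphism from the proper $Z$-scheme $\cY$ (a subbundle of the trivial bundle $W\times \bF \to \bF$) to the affine $Z$-scheme $\tilde{\cZ}$. By Lemma~\ref{prop:pe-ratsing}, $\rho$ restricts to an isomorphism on the dense open $U = \tilde{\cZ} \setminus E$, and since $\rho$ is birational we have $\dim\cY = \dim\tilde{\cZ}$, so Proposition~\ref{prop:pe-codim2} forces $E$ to have codimension $\ge 2$ in $\tilde{\cZ}$. Because $\tilde{\cZ}$ is normal by Proposition~\ref{prop:tildeZ-normal} and the finite coherent algebra $\rho_*\cO_\cY$ agrees with $\cO_{\tilde{\cZ}}$ on $U$, normality forces the inclusion $\cO_{\tilde{\cZ}} \hookrightarrow \rho_*\cO_\cY$ to be an equality.

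For the vanishing $\rR^i\rho_*\cO_\cY = 0$ when $i > 0$, we reduce to a Kempf collapsing. Let $\nu \colon \tilde{\cZ} \to Z$ (when $2r > n$) or $\nu \colon \tilde{\cZ} \to Z'$ (when $2r \le n$) be the finite structure map, and put $\tau = \nu \circ \rho$. Since $\nu$ is affine we have $\rR\nu_* = \nu_*$, so composition of derived pushforwards gives $\rR^i\tau_*\cO_\cY = \nu_*\rR^i\rho_*\cO_\cY$; faithfulness of $\nu_*$ on coherent sheaves then reduces the desired vanishing for $\rho$ to the vanishing of $\rR^i\tau_*\cO_\cY$. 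Now $\tau$ is a Kempf collapsing of exactly the type studied in \cite{weyman}: $\cY$ is a vector subbundle of the trivial bundle $W \times \bF$ over the flag variety $\bF$, and $\tau$ is induced by the projection to $W$. The derived pushforward $\rR\tau_*\cO_\cY$ is therefore computed by the cohomology of symmetric powers of the dual bundle on $\bF$, and the required vanishing in positive degrees for the specific homogeneous bundles arising here is the content of the classical cohomology calculations in \cite[Ch.~6]{weyman} for the determinantal varieties $Z_0$ and $Z_1$, with the refinement for the double cover $Z'$ supplied by \cite[Proposition 4.10]{lwood}.

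The parallel argument, using $Y$ in place of $\cY$ and $\Gr_r(V)$ in place of $\bF$, yields $\rR\pi_*\cO_Y = \cO_{\tilde{Z}}$ and hence rational singularities of $\tilde{Z}$. Everything except the positive-degree vanishing in step three is formal manipulation of properness, finite morphisms, and normality; the main technical obstacle is this Kempf collapsing vanishing, which is precisely where the machinery of Weyman and L{\H{o}}wood enters and whose verification forms the substance of the argument.
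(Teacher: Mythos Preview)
Your strategy inverts the paper's logical order, and the step you defer to \cite[Ch.~6]{weyman} and \cite{lwood} is exactly the one that cannot be read off from those references. Weyman's Chapter~6 establishes cohomology vanishing for the Kempf collapsings of $Z_0$ and $Z_1$ \emph{separately}: each uses a single isotropy condition on a Grassmannian and computes $\rH^i(\Gr,\Sym\eta_j)$ for one summand $\eta_j$. Your map $\tau$, however, requires the vanishing of $\rH^i(\Gr_r(V),\Sym^a\eta_0\otimes\Sym^b\eta_1)$ for all $a,b$, where $\eta_0$ records the skew-symmetric isotropy on $\cR$ and $\eta_1$ the symmetric isotropy on $\cQ^*$ over the \emph{same} Grassmannian. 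That mixed vanishing is not a formal consequence of the two separate ones, and it is not what \cite{weyman} or \cite{lwood} prove. The situation for $\cY$ over the partial flag variety is even further from anything in those references. Indeed, the introduction of the present paper flags the vanishing $\rR^i\pi_*\cO_Y=0$ as something ``we will show,'' and it appears only as Corollary~\ref{cor:affine-Z}, \emph{after} rational singularities are established.

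The paper's argument avoids this problem entirely. It invokes \cite[Proposition~4.1]{superres}, a criterion which inputs only (i) normality of $\tilde{\cZ}$ (Proposition~\ref{prop:tildeZ-normal}), (ii) smoothness of the open set $U$ (Lemma~\ref{prop:pe-ratsing}), and (iii) the codimension-$2$ bound on $\rho^{-1}(E)$ in $\cY$ (Proposition~\ref{prop:pe-codim2}), and outputs rational singularities of $\tilde{\cZ}$ directly. Rational singularities of $\tilde{Z}$ then follow because $\tilde{Z}$ is a quotient of $\tilde{\cZ}$ by a finite group (via the splitting/factorization ring relationship), using \cite[Proposition~5.13]{kollarmori}. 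Only \emph{afterwards} does one deduce $\rR\pi_*\cO_Y=\cO_{\tilde Z}$ as a corollary. Your ``parallel argument'' for $\tilde Z$ via $Y$ runs into the same obstruction: you would again need the mixed Kempf vanishing, which is not available as a citation.
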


\begin{proof}
By Proposition~\ref{prop:pe-ratsing}, $U$ has rational singularities, and by Proposition~\ref{prop:pe-codim2}, $\rho^{-1}(\tilde{\cZ} \setminus U)$ has codimension $\ge 2$. Since $\tilde{\cZ}$ is normal by Proposition~\ref{prop:tildeZ-normal}, we can use \cite[Proposition~4.1]{superres} to conclude that $\tilde{\cZ}$ has rational singularities. Finally, $\tilde{Z}$ is a quotient of $\tilde{\cZ}$ by a finite group and hence inherits the rational singularities property by \cite[Proposition 5.13]{kollarmori}.
\end{proof}

\begin{corollary} \label{cor:affine-Z}
We have $\pi_*(\cO_Y)=\cO_{\tilde{Z}}$ and $\rR^i \pi_*(\cO_Y)=0$ for $i>0$.
\end{corollary}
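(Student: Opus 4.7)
The strategy is to verify that $\pi \colon Y \to \tilde{Z}$ is a resolution of singularities, so that combined with the rational singularities of $\tilde{Z}$ established in the previous proposition, this immediately yields $\rR \pi_* \cO_Y = \cO_{\tilde{Z}}$. Here the underlying principle is the standard fact that if $X$ has rational singularities and $\pi \colon Y \to X$ is \emph{any} proper birational morphism with $Y$ smooth, then $\rR \pi_* \cO_Y = \cO_X$ (apply the definition of rational singularities to a common resolution dominating both $Y$ and a given resolution of $X$, together with the fact that for proper birational maps between smooth varieties the derived pushforward of $\cO$ is $\cO$). So the task reduces to checking three properties: $Y$ smooth, $\pi$ proper, and $\pi$ birational.

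Smoothness and properness are routine. $Y$ is a subbundle of the trivial bundle $W \times \Gr_r(V) \to \Gr_r(V)$, since the conditions that $R$ be isotropic for $f$ and $(V/R)^*$ be isotropic for $g$ are linear in $(f,g)$ for each fixed $R$; as the base $\Gr_r(V)$ is smooth, so is $Y$. For properness of $\pi$, note that $W \times \Gr_r(V) \to W$ is proper (because $\Gr_r(V)$ is), so the closed subscheme $Y$ is proper over $W$, and a fortiori over $Z$. Since $\tilde{Z} \to Z$ is finite by Theorem~\ref{thm:pedetvar}(b), it is separated, and the cancellation property for proper morphisms then gives that $\pi \colon Y \to \tilde{Z}$ is proper.

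Birationality is the substantive step, and runs in parallel with the argument for $\rho$ in Lemma~\ref{prop:pe-ratsing}. Consider the open subset $U_0 \subset \tilde{Z}$ consisting of points where the distinguished factorization $\ol{\chi}(u) = p(u) p(-u)$ has distinct nonzero roots in $p$; the normal-form analysis of \S\ref{ss:normal-form} exhibits such points, so $U_0$ is dense. Over $U_0$ the subspace $R$ is uniquely determined by the $\tilde{Z}$-point: if $2r > n$, then $(V/R)^*$ must be the direct sum of the $fg$-eigenspaces corresponding to the specified roots of $p$, and if $2r \le n$, then $R$ must be the direct sum of the corresponding $gf$-eigenspaces. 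Thus $\pi^{-1}(U_0) \to U_0$ is bijective on closed points, and by Zariski's main theorem \stacks{02LR} (using that $Y$ is smooth and $\tilde{Z}$ is normal by Proposition~\ref{prop:tildeZ-normal}) this upgrades to a scheme-theoretic isomorphism, establishing birationality. The main obstacle is carrying out this recovery of $R$ cleanly in both cases $2r > n$ and $2r \le n$ while keeping track of the sign and ordering conventions imposed by the (signed or type D) splitting-ring structure used to define $\tilde{Z}$ and the map $\pi$.
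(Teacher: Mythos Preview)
Your proof is correct and follows the same line as the paper's one-sentence argument, which simply invokes the rational singularities of $\tilde{Z}$ together with \cite[Proposition~4.2]{kollarmori}; you have spelled out the implicit verification that $\pi$ is a resolution of singularities, in parallel to what the paper does for $\rho$. One small fix: your open set $U_0$ should also exclude the locus where some root of $p$ equals the negative of another (equivalently, require $\ol{\chi}$ itself to have distinct roots), since otherwise the corresponding eigenspace of $fg$ (or $gf$) may be two-dimensional and $R$ is not uniquely determined---this is still a dense open condition, so the argument goes through after shrinking $U_0$.
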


\begin{proof}
This follows from the previous proposition and general results on rational singularities, see for example \cite[Proposition 4.2]{kollarmori}.
\end{proof}

\section{Cohomology of the periplectic Grassmannian} \label{sec:pegrass}

\subsection{Statement of results}

We fix the following notation for this section.
\begin{itemize}
\item $V$ is a complex vector space of dimension $n$.
\item $\bV = V \oplus V^*$ is a super vector space with $\deg(V)=0$ and $\deg(V^*)=1$. As before, we equip $\bV$ with the canonical symmetric pairing $\langle, \rangle$ given by
  \[
    \langle (a_1,b_1), (a_2,b_2) \rangle = b_1(a_2) + b_2(a_1).
  \]
\item We let $\bG = {\bf Pe}(\bV)$ be the stabilizer subgroup of supergroup $\GL(\bV)$ that preserves $\langle , \rangle$ (thought of as an element of $\Sym^2(\bV)$). Its Lie superalgebra $\mathfrak{pe}(\bV)$ has the $(\bZ/2)$-graded decomposition
  \[
    \mathfrak{pe}(\bV)_0 = \fgl(V),\qquad     \mathfrak{pe}(\bV)_1 = \Sym^2(V^*) \oplus \bigwedge^2(V).
  \]
  The even subgroup of $\bG$ is $\bG_{\rm ord} \cong \GL(V)$.
  
\item Let $0 \le r \le n$. We set $X = \bP\Gr_{r|n-r}(\bV)$ is the periplectic Grassmannian (see \S\ref{sec:GS-theory} for details).
\item $W = \bigwedge^2(V^*) \oplus \Sym^2(V)$ and $S = \Sym(W^*)$.
  
\item If $2r > n$, then set $A = \rH_{\rm sing}^*(\bI\Gr_{n-r}(\bC^{2(n-r)}), \bC)$ regarded as a graded $\bC$-algebra. 

  If $2r \le n$, then set $A = \rH_{\rm sing}^*(\bO\Gr_r(\bC^{2r}),\bC)$ regarded as a graded $\bC$-algebra.
\end{itemize}

\begin{theorem} \label{thm:grcoh}
Suppose $0 < r < n$.  We have the following:
\begin{enumerate}
\item We have a natural isomorphism $\rH^*(X, \cO_X)^{\bG}=A$ of graded algebras.
\item There is a canonical graded $\bG$-subrepresentation $E$ of $\rH^*(X, \cO_X)$ such that the natural map $A \otimes E \to \rH^*(X, \cO_X)$ is an isomorphism.
\item We have a canonical isomorphism of $\bG_\ord$-representations
\begin{displaymath}
E^i = \bigoplus_{p \ge 0} \Tor_p^S(\cO_Z, \bC)_{i+p}.
\end{displaymath}
\end{enumerate}
\end{theorem}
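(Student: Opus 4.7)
The plan mirrors the super Grassmannian argument from the companion paper \cite{superres}, substituting the geometric input from Theorem~\ref{thm:pedetvar}. The starting point is the $\cJ$-adic filtration on $\cO_X$, where $\cJ$ is the ideal sheaf of $X_\ord = \Gr_r(V)$. Since $X$ is smooth in the sense of the introduction, this yields a $\bG$-equivariant spectral sequence
\[
  E_1^{p,q} = \rH^q(X_\ord, \bigwedge^p(\cJ/\cJ^2)) \Longrightarrow \rH^{p+q}(X, \cO_X).
\]
First I would identify $\cJ/\cJ^2$ as a subbundle of $\fg_1 \otimes \cO_{X_\ord}$; letting $\eta$ denote the quotient and $Y_0 = \Spec_{X_\ord}(\Sym \eta)$ with its Kempf projection $\pi_0 \colon Y_0 \to \fg_1 \cong W$, the Kempf--Weyman formula gives
\[
  \Tor_p^{S}(\rR^\bullet \pi_{0,*} \cO_{Y_0}, \bC)_{p+q} = \rH^q(X_\ord, \bigwedge^p(\cJ/\cJ^2)).
\]

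Next I would match $Y_0$ with the scheme $Y$ defined in \S\ref{ss:pedetvar}: unpacking $\eta$ using the tautological sub and quotient bundles on $\Gr_r(V)$, the $T$-points of $Y_0$ correspond precisely to the triples $(f,g,R)$ with the isotropy conditions stated there. Theorem~\ref{thm:pedetvar}(e) then provides $\rR^{>0} \pi_{0,*}\cO_{Y_0}=0$ and $\pi_{0,*}\cO_{Y_0}=\cO_{\tilde Z}$, viewed as an $S$-module via $\tilde Z \to Z \hookrightarrow W$. To compute the resulting Tor, I would use Theorem~\ref{thm:pedetvar}(b), which says $\cO_{\tilde Z}$ is free over $\cO_{Z'}$; combined with a comparison between $\cO_Z$ and $\cO_{Z'}$ (trivial when $2r>n$ since then $Z'=Z$, and when $2r\le n$ using the Cohen--Macaulay and rational-singularity properties of the double cover), I expect an isomorphism
\[
  \Tor_p^{S}(\cO_{\tilde Z}, \bC) \cong \Tor_p^{S}(\cO_Z, \bC) \otimes_\bC \bigl(\cO_{\tilde Z} \otimes_{\cO_Z} \bC\bigr),
\]
with the fiber identified via Theorem~\ref{thm:pedetvar}(c,d) as $A$ (up to an exterior factor when $2r\le n$).

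The final step is degeneration of the spectral sequence and extraction of the three parts. Degeneration should follow because the total $\bC$-dimension of the $E_1$-page already equals that of the abutment, as encoded in the vanishing of $\rR^{>0}\pi_{0,*}\cO_{Y_0}$; alternatively one may argue via $\bG$-equivariance. Combining degeneration with the tensor decomposition yields $\rH^*(X,\cO_X) \cong A\otimes E$ where $E^i = \bigoplus_p \Tor_p^S(\cO_Z,\bC)_{i+p}$. The $A$-factor is a trivial $\bG$-representation (it comes from the singular cohomology of a classical isotropic or orthogonal Grassmannian), so taking $\bG$-invariants recovers (a); the factor $E$ carries the remaining super representation structure, giving (b) and (c).

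The main obstacle I anticipate is the equivariance and grading bookkeeping in Steps~2 and~4: one must verify that the Kempf--Weyman identification is genuinely $\bG$-equivariant, and that the three gradings in play (the cohomological degree on $X$, the homological $\Tor$-degree, and the internal polynomial grading on $S$) align so that $E^i = \bigoplus_p \Tor_p^S(\cO_Z,\bC)_{i+p}$ holds as stated. In the $2r\le n$ case there is the further subtlety of the double cover $Z' \to Z$ and the $\bigwedge^{2r} V^*$ summand appearing in the fiber of $\cO_{\tilde Z} \otimes_{\cO_Z} \bC$, which must be absorbed into $E$ via a suitable grading shift or otherwise identified with a specific $\bG$-representation compatible with the theorem statement.
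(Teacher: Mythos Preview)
Your outline follows the paper's strategy closely, but there are two genuine gaps.

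\textbf{Degeneration.} The claim that degeneration follows because ``the total $\bC$-dimension of the $E_1$-page already equals that of the abutment, as encoded in the vanishing of $\rR^{>0}\pi_{0,*}\cO_{Y_0}$'' is circular: the vanishing of higher direct images is what lets you replace hyper-Tor by ordinary Tor in the Kempf--Weyman formula, but it says nothing about $\dim \rH^*(X,\cO_X)$, which is precisely the unknown. The paper's argument is different and requires real input: one computes the $\bG_\ord$-decomposition of $L_k = \bigoplus_p \Tor_p^S(\cO_{Z'},\bC)_{p+k}$ explicitly via the J\'ozefiak--Pragacz--Weyman resolutions (Propositions~\ref{prop:Lk1}, \ref{prop:Lk2}), observes that $L_k$ is multiplicity-free and nonzero only for $k$ in a fixed residue class, and uses that $A$ is concentrated in even degrees to conclude that $\tilde L_k$ and $\tilde L_{k+1}$ share no $\bG_\ord$-irreducibles. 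Only then does $\bG_\ord$-equivariance force all differentials to vanish. Your alternative ``via $\bG$-equivariance'' cannot work either without this representation-theoretic input.

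\textbf{From $\bG_\ord$ to $\bG$.} Saying that $A$ is a trivial $\bG$-representation because it ``comes from the singular cohomology of a classical Grassmannian'' only shows $\bG_\ord$-triviality; the odd part of $\mathfrak{pe}(\bV)$ could in principle act nontrivially. The paper (Proposition~\ref{prop:G=G0-invt}) checks, again using the explicit Schur functor list, that neither $\Sym^2(V^*)$ nor $\bigwedge^2 V$ occurs in $\rH^i(X,\cO_X)$ when $0<r<n$, so the odd action on the $\bG_\ord$-invariants is forced to be zero. Likewise, the assertion that $E$ is a $\bG$-subrepresentation is not automatic from the tensor decomposition (which is only $\bG_\ord$-equivariant a priori); the paper deduces it from an $\Ext^1_{\bG}$-vanishing between the $L_i$'s (Proposition~\ref{prop:Gext}).

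You correctly flag the $Z'$ versus $Z$ issue in the $2r\le n$ case; the paper in fact works with $\Tor^S(\cO_{Z'},\bC)$ throughout the proof (Proposition~\ref{prop:tor-A}), and the $\bigwedge^{2r}V^*$ factor in Theorem~\ref{thm:pedetvar}(d) is what accounts for the passage from $Z$ to $Z'$ at the fiber level.
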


The restriction of $E$ to $\GL(E)$ will be described explicitly in \S\ref{sec:JPW}.

\begin{remark}
  As will follow from the discussion below, when $r=0$, the periplectic Grassmannian ${\bf PGr}_{0|n}(\bV)$ is topologically a point with coordinate ring $\bigwedge^\bullet(\Sym^2(V^*))$. Similarly, when $r=n$, it is topologically a point with coordinate ring $\bigwedge^\bullet(\bigwedge^2 V)$. 
\end{remark}

\subsection{Grothendieck--Springer theory}\label{sec:GS-theory}

Given a complex supercommutative superalgebra $T$, define $\bV_T = \bV \otimes_\bC T$, which inherits a $\bZ/2$-grading as well as the form $\langle, \rangle$.

The super Grassmannian $\Gr_{r|n-r}(\bV)$ represents the functor that assigns to $T$ the set of $T$-submodules of $\bV_T$ which are locally summands of rank $r|n-r$. Its underlying even scheme is $\Gr_r(V) \times \Gr_{n-r}(V^*)$.

The periplectic Grassmannian $X = \bP\Gr_{r|n-r}(\bV)$ is the closed subsuperscheme of $\Gr_{r|n-r}(\bV)$ consisting of subspaces which are isotropic with respect to $\langle, \rangle$. Let $X_{\rm ord}$ be its underlying even subscheme; we have an isomorphism $\Gr_r(V) \to X_{\rm ord}$ via $W \mapsto (W,(V/W)^*)$. Consider the restriction of the tautological sequence from $\Gr_{r|n-r}(\bV)$ to $X$:
\[
  0 \to \cR \to \cO_X \otimes \bV \to \cQ \to 0.
\]
Here $\cR$ is locally free of rank $r|n-r$ and $\cQ$ is locally free of rank $n-r|r$, and by the above comments, we see that $\cR_0 \cong \cQ_1^*$ and $\cQ_0 \cong \cR_1^*$.

We note that $\bG$ acts transitively on $X$. Pick a basis $e_1,\dots,e_n$ for $V$ with dual basis $e_1^*,\dots,e_n^*$ for $V^*$. Let $\bP_r$ be the stabilizer of the subspace with basis $e_1,\dots,e_r, e_{r+1}^*,\dots, e_n^*$. Then we can identify $X$ with the quotient $\bG/\bP_r$ (see \cite{masuoka} for background on quotients in super algebraic geometry).

Let $\cJ$ be the ideal sheaf of $X_{\rm ord}$ in $\cO_X$; we now determine the associated graded sheaf $\gr(\cO_X)$ with respect to the $\cJ$-adic filtration.

\begin{proposition}
  We have a natural isomorphism
  \[
    \bigwedge^2(\cR_0) \oplus \Sym^2(\cQ_0^*) \cong \cJ/\cJ^2
  \]
  of coherent $\cO_{X_{\rm ord}}$-modules.
\end{proposition}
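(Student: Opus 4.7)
My plan is to compute the fiber of $\cJ/\cJ^2$ at each point of $X_\ord$ by first-order deformation theory inside the ambient super Grassmannian $\Gr_{r|n-r}(\bV)$, and then globalize by $\bG_\ord = \GL(V)$-equivariance. Fix $W \in X_\ord \cong \Gr_r(V)$ corresponding to the isotropic super-subspace $\cR = W \oplus (V/W)^* \subset \bV$, so $\bV/\cR = V/W \oplus W^*$. The super tangent space of $\Gr_{r|n-r}(\bV)$ at $\cR$ is $\Hom(\cR, \bV/\cR)$ with its natural $\bZ/2$-grading. I would first check that $\phi \in \Hom(\cR, \bV/\cR)$ gives a first-order isotropic deformation iff the bilinear form $B_\phi(v, w) := \langle \phi(v), w \rangle$ on $\cR$ (well-defined because $\cR$ is Lagrangian, so that the choice of lift from $\bV/\cR$ to $\bV$ does not matter) is super-antisymmetric: $B_\phi(v, w) + (-1)^{|v||w|} B_\phi(w, v) = 0$ for homogeneous $v, w$. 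This is direct from expanding $\langle v + \epsilon \phi(v), w + \epsilon \phi(w) \rangle \equiv 0 \pmod{\epsilon^2}$ and using $\langle \cR, \cR \rangle = 0$ for the leading term.

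Next I would specialize to odd $\phi$, which parametrize the normal directions of $X_\ord$ inside $X$. Such a $\phi$ splits as a pair $(\phi_1, \phi_2)$ with $\phi_1 \colon W \to W^*$ and $\phi_2 \colon (V/W)^* \to V/W$. Using $\langle V, V \rangle = 0$ and $\langle V^*, V^* \rangle = 0$, the super-antisymmetry of $B_\phi$ on even-even inputs forces $\phi_1$ to be skew, so $\phi_1 \in \bigwedge^2 W^*$; on odd-odd inputs (where $(-1)^{|v||w|} = -1$) it forces $\phi_2$ to be symmetric, so $\phi_2 \in \Sym^2(V/W)$; on mixed inputs both pairings vanish identically, so no condition is imposed. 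Hence the odd tangent of $X$ at $\cR$ is $\bigwedge^2 W^* \oplus \Sym^2(V/W)$, and dualizing (since $\cJ/\cJ^2$ is the conormal bundle, i.e.\ the dual of the normal bundle to $X_\ord \subset X$) gives $\cJ/\cJ^2|_\cR = \bigwedge^2 W \oplus \Sym^2(V/W)^*$.

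To globalize, every step above is $\GL(V)$-equivariant and natural in the point $W$; the tautological subbundle $\cR_0$ and quotient bundle $\cQ_0$ on $X_\ord$ have fibers $W$ and $V/W$ respectively, so the fiberwise identifications patch together to the asserted isomorphism $\cJ/\cJ^2 \cong \bigwedge^2 \cR_0 \oplus \Sym^2 \cQ_0^*$ of coherent sheaves on $X_\ord$. The main subtlety is the super sign bookkeeping: a single super-antisymmetry condition on $B_\phi$ produces antisymmetry on the even-even piece but symmetry on the odd-odd piece, which is precisely why a wedge power and a symmetric power appear side by side in the answer. One small extra point to verify is that $B_\phi$ actually descends to a well-defined form on $\cR$ (not just on a lift in $\bV$), which is exactly the Lagrangian property $\cR^\perp = \cR$.
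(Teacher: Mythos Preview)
Your argument is correct and reaches the right answer. The approach, however, differs from the paper's. The paper invokes the homogeneous space description $X=\bG/\bP_r$ and the general fact (from \cite{masuoka}) that the odd cotangent space at the basepoint is $(\fg_1/\fp_1)^*$; the identification with $\bigwedge^2 R \oplus \Sym^2(V/R)^*$ is then declared a ``routine calculation'' in the Lie superalgebra. You instead work inside the ambient super Grassmannian and compute the odd normal directions by linearizing the isotropy condition, obtaining the super--antisymmetry constraint on $B_\phi$ and reading off the skew/symmetric conditions on $\phi_1,\phi_2$ directly. Both proofs use $\GL(V)$-equivariance to pass from a single fiber to the global statement.

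What your route buys is that it avoids citing structure theory of super homogeneous spaces and makes the super sign mechanism explicit: the single condition $B_\phi(v,w)+(-1)^{|v||w|}B_\phi(w,v)=0$ visibly specializes to antisymmetry on $W\times W$ and symmetry on $(V/W)^*\times(V/W)^*$, which is exactly why $\bigwedge^2$ and $\Sym^2$ appear together. What the paper's route buys is brevity and a clean conceptual identification of $\cJ/\cJ^2$ once the machinery of \cite{masuoka} is in hand. One caution: your derivation of the super--antisymmetry of $B_\phi$ from the expansion $\langle v+\epsilon\phi(v),\,w+\epsilon\phi(w)\rangle$ implicitly uses a convention for how the odd parameter $\epsilon$ passes through the second slot of the odd form $\langle\,,\,\rangle$; the sign you end up with is the correct one, but it would strengthen the write-up to state that convention once (e.g.\ $\langle v,\epsilon y\rangle=(-1)^{|v|}\epsilon\langle v,y\rangle$) before invoking it.
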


\begin{proof}
  Since both sides are homogeneous bundles, it suffices to show that their restrictions over the point $x$, represented by the span of $e_1,\dots,e_r$, are isomorphic as $\bP_{\rm ord}$-modules. The right hand side is the odd component of the cotangent space, so via the identification $X = \bG/\bP$, we can identify it with $(T_e \bG)_1^* / (T_e \bP)^*_1$ where $e \in \bP$ is the identity. (See \cite[Proposition 4.18]{masuoka}.) We can identify $T_e \bG$ and $T_e \bP$ with the corresponding Lie algebras, and a routine calculation identifies the quotient with $\bigwedge^2R \oplus \Sym^2(V/R)^*$, where $R = {\rm span}(e_1,\dots,e_r)$.
\end{proof}

In particular, let $\epsilon = W^* \otimes \cO_{X_{\ord}}$; we have $\cJ / \cJ^2 \subseteq \epsilon$ and we define $\eta$ to be the quotient sheaf. In particular, we have
\[
  0 \to \cJ/\cJ^2 \to \epsilon \to \eta \to 0
\]
and are in the setup of \cite[\S 2.3]{superres}.
An element of the total space of $\epsilon$ can be represented by a tuple $(f,g,R)$ where $R \in \Gr_r(V)$; $f \colon V \to V^*$ is a skew-symmetric linear map, and $g \colon V^* \to V$ is a symmetric linear map. The total space of $\eta$ consists of tuples where both of the compositions
\[
  R \to V \xrightarrow{f} V^* \to R^*, \qquad (V/R)^* \to V^* \xrightarrow{g} V \to V/R
\]
are 0. In particular, the total space of $\eta$ is $Y$, as defined in \S\ref{ss:pedetvar}. 

By Corollary~\ref{cor:affine-Z}, $\tilde{Z}$ as defined in \S\ref{ss:pedetvar} is the affinization of $Y$, and $\pi \colon Y \to \tilde{Z}$ is the affinization map. In particular, by \cite[Theorem 2.4]{superres}, we have
\[
  \Tor_p^S(\cO_{\tilde{Z}}, \bC)_{p+q} = \rH^q(\Gr(r,V), \bigwedge^{p+q}( \bigwedge^2\cR_0 \oplus \Sym^2\cQ_0^*) ) ,
\]
and a spectral sequence
\begin{equation} \label{eqn:main-SS}
  \rE_1^{p,q} = \Tor^S_{-q}(\cO_{\tilde{Z}}, \bC)_p \Longrightarrow \rH^{p+q}(X, \cO_{X}).
\end{equation}

\subsection{The Jozefiak--Pragacz--Weyman complex} \label{sec:JPW}

Now define
\[
  \tilde{L}_k = \bigoplus_{p \ge 0} \Tor^S_p(\cO_{\tilde{Z}}, \bC)_{p+k}, \qquad
  L_k = \bigoplus_{p \ge 0} \Tor^S_p(\cO_{Z'}, \bC)_{p+k}
\]
(where, as usual, we set $Z'=Z$ if $2r>n$).

We now review the calculation of $L_k$ from \cite[\S\S 6.3, 6.4]{weyman}. We need to separate based on whether or not $2r>n$, though the answers are closely related in both cases.

Let $a,b$ be a nonnegative integers. Given a partition $\alpha$ with $\ell(\alpha) \le b$, let $\alpha^T$ denote the transpose partition, and define the partition
\begin{align*}
P(a, b,\alpha) = (b + \alpha_1, \dots, b + \alpha_b, \underbrace{b, \dots b}_{a}, \alpha^T_1,
\dots, \alpha^T_{\alpha_1}),
\end{align*}
whose Young diagram can be visualized as follows:
\[
\begin{tikzpicture}[scale=.4]
\draw (0,-2) -- (0,7) -- (8,7) -- (8,6) -- (6,6) -- (6,5) -- (5,5) -- (5,4) -- (3,4) -- (3,1) -- (2,1) -- (2,0) -- (1,0) -- (1,-2) -- (0,-2) -- cycle;
\draw (0,3) -- (3,3);
\draw (0,4) -- (3,4) -- (3,7);
\path (1.5,5.5) node {$b \times b$};
\path (1.5,3.5) node {$a \times b$};
\path (4.5,5.5) node {$\alpha$};
\path (1.5,1.5) node {$\alpha^T$};
\end{tikzpicture}
\]

Let $\bS_\lambda$ denote the Schur functor with highest weight $\lambda$.

\begin{proposition} \label{prop:Lk1}
  Suppose $2r > n$ and set $a = 2n-2r+1$. If $r<n$, then we have 
  \[
  L_{b(n-r)} = \bigoplus_\alpha \bS_{P(a,b,\alpha)} V
\]
and $L_k=0$ if $k$ is not divisible by $n-r$.

If $n=r$, then  $L_0 = \bigwedge^\bullet(\bigwedge^2 V)$ and all other $L_k$ are $0$.
\end{proposition}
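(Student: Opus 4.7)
The plan is to reduce the computation to the J\'ozefiak--Pragacz--Weyman (JPW) resolution for the ideal of sub-maximal Pfaffians, as computed in \cite[\S 6.4]{weyman}, preceded by a K\"unneth reduction isolating the skew-symmetric factor. First, since $2r > n$ by hypothesis, we have $Z_1 = W_1$, so $Z = Z_0 \times W_1$ inside $W$. Writing $S = S_0 \otimes_\bC S_1$ with $S_0 = \Sym(\bigwedge^2 V)$ the polynomial ring on $W_0$ and $S_1 = \Sym(\Sym^2 V^*)$ that on $W_1$, we have $\cO_Z \cong \cO_{Z_0} \otimes_\bC S_1$ as graded $S$-modules. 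Since $S_1$ is free over itself, a K\"unneth argument yields the $\GL(V)$-equivariant, grading-preserving identification
\[
\Tor^S_p(\cO_Z, \bC)_{p+k} = \Tor^{S_0}_p(\cO_{Z_0}, \bC)_{p+k},
\]
reducing the problem to Tor over $S_0$ of the Pfaffian variety $Z_0$ of rank $\le 2(n-r)$ skew forms on $V$.

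For the degenerate case $r = n$, we have $Z_0 = \{0\}$, hence $\cO_{Z_0} = \bC$, and the Tor is the Koszul homology $\bigwedge^p(\bigwedge^2 V)$ sitting in internal degree $p$. This gives exactly $L_0 = \bigwedge^\bullet(\bigwedge^2 V)$ and $L_k = 0$ for $k \ne 0$, handling the second clause of the proposition.

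For the main case $0 < r < n$, $\cO_{Z_0}$ is the quotient of $S_0$ by the ideal of $(2(n-r)+2)$-Pfaffians, and I invoke the JPW minimal free resolution, a $\GL(V)$-equivariant complex whose terms are indexed by pairs $(b, \alpha)$ with $\ell(\alpha) \le b$: the Schur functor $\bS_{P(2(n-r)+1, b, \alpha)} V$ contributes to $\Tor^{S_0}_p$ in internal degree $p + b(n-r)$ for the homological degree $p$ determined by $|P(a,b,\alpha)| = 2(p+b(n-r))$. Collecting by $k = b(n-r)$ yields the stated formula, and the vanishing of $L_k$ for $k$ not a multiple of $n-r$ is immediate from the JPW indexing. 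The main ``obstacle'' here is the JPW resolution itself, which is proved in \cite[\S 6.4]{weyman} via Kempf collapsing from the Grassmannian of kernel subspaces combined with Borel--Weil--Bott; we use it as a black box. The only verification on our side is the partition-size arithmetic for $P(a,b,\alpha)$ confirming the match with the diagonal strand $L_{b(n-r)}$, which is routine.
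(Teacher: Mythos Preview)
Your proposal is correct and follows essentially the same approach as the paper: both arguments reduce via the K\"unneth-type identification $\Tor^S_p(\cO_Z,\bC)\cong\Tor^{S_0}_p(\cO_{Z_0},\bC)$ (using $Z_1=W_1$ when $2r>n$), then invoke the JPW resolution from \cite[\S6.4]{weyman} and check the partition arithmetic $\tfrac{1}{2}|P(2n-2r+1,b,\alpha)|=|\alpha|+b(n-r)+\tfrac{b(b+1)}{2}$ to read off $k=b(n-r)$. The degenerate case $r=n$ is likewise handled identically via the Koszul complex.
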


\begin{proof}
If $n=r$, then $Z_0$ is a single point, so the minimal free resolution of $Z$ is the Koszul complex on $\bigwedge^2 V$.
  
In \cite[Proposition 6.4.3]{weyman}, the Tor groups are calculated for $Z_0$ in $\Sym(W_0^*)$, but we have
\[
  \Tor_i^S(\cO_Z, \bC) \cong \Tor_i^{\Sym(W_0^*)}(\cO_{Z_0}, \bC)
\]
since $\cO_Z = \cO_{Z_0} \otimes \Sym(W_1^*)$, so there is no difference. Specifically, we have
\[
  \Tor^S_i(\cO_Z, \bC) = \bigoplus_{\substack{b \ge 0,\ \alpha\\ i = |\alpha| + b(b+1)/2}} \bS_{P(a, b,\alpha)} V.
\]
The grading on $\Tor$ is determined by the size of the partition divided by 2 and we have
\[
  \frac12 |P(2n-2r+1, b,\alpha)|= |\alpha| + b(n-r) + \frac{b(b+1)}{2}. \qedhere
\]
\end{proof}

\begin{proposition} \label{prop:Lk2}
  If $0 < 2r \le n$, set $a = 2r-1$. Then we have
\[
  L_{br} = \bigoplus_\alpha \bS_{P(a,b,\alpha)} (V^*)
\]
and $L_k=0$ if $k$ is not divisible by $r$.

If $r=0$, then $L_0 = \bigwedge^\bullet(\Sym^2 (V^*))$ and all other $L_k$ are $0$.
\end{proposition}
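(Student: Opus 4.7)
The plan is to mirror the proof of Proposition~\ref{prop:Lk1}, with the double cover $Z'_1$ of the symmetric determinantal variety playing the role of the skew-symmetric determinantal variety.

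First I would handle the special case $r=0$ separately. Here $Z_1=\{0\}$ and $Z_0=W_0$, so $Z=W_0\times\{0\}$; the double-cover construction of \S\ref{ss:invt-theory} requires $0<2r$, so it does not apply, and we simply take $Z'=Z$. The ideal of $Z$ in $W$ is generated by the regular sequence $W_1^*=\Sym^2(V^*)$, so the minimal free resolution of $\cO_Z$ over $S$ is the Koszul complex on $W_1^*$, giving
\[
  \Tor^S_p(\cO_{Z'},\bC)=\bigwedge\nolimits^p(\Sym^2 V^*)
\]
concentrated in internal degree $p$. Every contribution therefore lands in $L_0=\bigwedge^\bullet(\Sym^2 V^*)$, with $L_k=0$ for $k\ne 0$.

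For the main case $0<2r\le n$, note first that $Z_0=W_0$ in this range, so from $Z'=W_0\times Z'_1$ we get an isomorphism of $S$-modules $\cO_{Z'}\cong\cO_{W_0}\otimes_\bC\cO_{Z'_1}$. Since $\cO_{W_0}=\Sym(W_0^*)$ is free over itself, a K\"unneth calculation reduces the Tor to
\[
  \Tor^S_i(\cO_{Z'},\bC)\cong\Tor^{\Sym(W_1^*)}_i(\cO_{Z'_1},\bC).
\]
I would then invoke \cite[Proposition~6.3.3]{weyman} via the identification $\cO_{Z'_1}\cong\rH^0(\Gr_r(V),\Sym\eta)$ from \S\ref{ss:invt-theory}. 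This yields a Schur-functor decomposition whose summands are of the form $\bS_{P(2r-1,b,\alpha)}(V^*)$, each placed in the internal degree prescribed by Weyman's grading.

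The final step is a short grading check. With the convention that $W_1^*=\Sym^2(V^*)$ sits in internal degree one in $S$, a Schur summand $\bS_\lambda(V^*)$ of the Tor of a symmetric-matrix module occupies internal degree $|\lambda|/2$. From the box-counting identity $|P(2r-1,b,\alpha)|=2|\alpha|+b(b+2r-1)$, one sees that for each $(b,\alpha)$ the internal degree exceeds the Tor index by exactly $br$, so the $(b,\alpha)$-summand lies in $L_{br}$. This gives the claimed formula and the vanishing of $L_k$ when $k$ is not divisible by $r$.

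The main obstacle is locating and correctly translating Weyman's Proposition~6.3.3 for $\cO_{Z'_1}$ rather than for $\cO_{Z_1}$: the degree-$r$ Pl\"ucker generator distinguishing $Z'_1$ from $Z_1$ (made explicit by Proposition~\ref{prop:Phi-constant}) is what ultimately forces the step size $r$ in the $L$-indexing and makes $a=2r-1$ the correct parameter, and carefully threading this shift through the resolution is where the real work lies. Once that is in hand, the rest is mechanical.
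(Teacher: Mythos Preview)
Your proposal is correct and follows essentially the same approach as the paper: reduce to $\Tor^{\Sym(W_1^*)}_i(\cO_{Z'_1},\bC)$ via the factorization $\cO_{Z'}=\Sym(W_0^*)\otimes\cO_{Z'_1}$, invoke \cite[Proposition~6.3.3]{weyman} through the identification of $\cO_{Z'_1}$ with $\rH^0(\Gr_r(V),\Sym\eta)$ from \S\ref{ss:invt-theory}, and finish with the grading computation $\tfrac12|P(2r-1,b,\alpha)|=|\alpha|+br+\tfrac{b(b-1)}{2}$, which places each summand in $L_{br}$. Your closing paragraph about the ``main obstacle'' is editorial rather than mathematical, but the substantive steps match the paper's proof.
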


\begin{proof}
  If $r=0$, then $Z'_1$ is a single point, so the minimal free resolution of $Z'$ is the Koszul complex on $\Sym^2(V^*)$.  

  As discussed in \S\ref{ss:invt-theory}, the Tor groups of $Z'_1$ as a module over $\Sym(W_1^*)$ are calculated in \cite[Proposition 6.3.3]{weyman}. But note that
\[
  \Tor_i^S(\cO_{Z'}, \bC) \cong \Tor_i^{\Sym(W_1^*)}(\cO_{Z_1'}, \bC)
\]
since $\cO_{Z'}= \Sym(W_0^*) \otimes \cO_{Z'_1}$, so there is no difference. Then
\[
  \Tor^S_i(\cO_{Z'}, \bC) = \bigoplus_{\substack{b \ge 0,\ \alpha\\ i = |\alpha| + b(b-1)/2}} \bS_{P(a, b,\alpha)}(V^*).
\]
The grading on $\Tor$ is determined by the size of the partition divided by 2 and we have
\[
  \frac12 |P(2r-1, b,\alpha)|= |\alpha| + br + \frac{b(b-1)}{2}. \qedhere
\]
\end{proof}

\begin{corollary} \label{cor:multfree}
  The $\bG_{\ord}$-representation $\bigoplus_{p \ge 0} \Tor_p^S(\cO_{Z'}, \bC)$ is multiplicity-free.
\end{corollary}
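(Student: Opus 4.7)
The plan is to reduce Corollary~\ref{cor:multfree} to an elementary injectivity statement about the parameters $(b, \alpha)$ indexing the Schur functors produced by Propositions~\ref{prop:Lk1} and~\ref{prop:Lk2}. Assuming $0 < r < n$, those propositions already exhibit $\bigoplus_p \Tor_p^S(\cO_{Z'}, \bC)$ as $\bigoplus_{b, \alpha} \bS_{P(a,b,\alpha)} V$ (when $2r > n$) or $\bigoplus_{b, \alpha} \bS_{P(a,b,\alpha)} V^*$ (when $2r \le n$), where $a = 2n-2r+1$ or $a = 2r-1$ respectively and each pair $(b, \alpha)$ contributes exactly one summand. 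Multiplicity-freeness as a $\GL(V)$-representation is therefore equivalent to injectivity of the assignment $(b, \alpha) \mapsto P(a, b, \alpha)$ for the fixed positive integer $a$.

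To establish that injectivity, I would read $b$ off the Young diagram of $\lambda = P(a, b, \alpha)$ directly. Because $a \ge 1$, the partition contains at least one row of length exactly $b$ immediately after the first $b$ rows, so $\lambda_b \ge b$ while $\lambda_{b+1} = b$. Any alternative presentation $\lambda = P(a, b', \alpha')$ with $b' > b$ would force $\lambda_{b+1} = b' + \alpha'_{b+1} \ge b' > b$, contradicting $\lambda_{b+1} = b$; symmetrically, $b' < b$ would require $\lambda_{b'+1} = b'$, but from the original presentation $\lambda_{b'+1} = b + \alpha_{b'+1} \ge b > b'$. Once $b$ is pinned down, $\alpha$ is recovered by $\alpha_i = \lambda_i - b$ for $1 \le i \le b$.

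For the boundary cases $r = 0$ and $r = n$, Propositions~\ref{prop:Lk1} and~\ref{prop:Lk2} instead identify $\bigoplus_p \Tor_p^S(\cO_{Z'}, \bC)$ with $\bigwedge^\bullet(\Sym^2 V^*)$ or $\bigwedge^\bullet(\bigwedge^2 V)$ respectively. These exterior algebras are classically multiplicity-free as polynomial $\GL(V)$-representations by Littlewood's plethysm formulas, so the corollary also holds here.

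I do not expect a substantial obstacle; the argument is a direct combinatorial consequence of the explicit decompositions recalled just above. The only minor bookkeeping is maintaining the case split between $2r > n$ and $2r \le n$ and verifying that the crucial hypothesis $a \ge 1$ holds throughout, which is automatic since $r \in (0, n)$ forces both $2n - 2r + 1 \ge 1$ and $2r - 1 \ge 1$.
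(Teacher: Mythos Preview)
Your argument is correct and matches the paper's intent: the corollary is stated without proof immediately after Propositions~\ref{prop:Lk1} and~\ref{prop:Lk2}, so the authors regard it as an immediate consequence of those explicit decompositions. Your contribution is simply to spell out the combinatorial reason why the Schur functors $\bS_{P(a,b,\alpha)}$ are pairwise non-isomorphic, namely the injectivity of $(b,\alpha)\mapsto P(a,b,\alpha)$ when $a\ge 1$, which is exactly the detail the paper leaves to the reader.
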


\subsection{Proof of Theorem~\ref{thm:grcoh}}

Now we assume $0<r<n$. If $2r > n$, we continue to use $Z'$ to mean $Z$.

\begin{proposition} \label{prop:tor-A}
  The graded vector space $\Tor_p^S(\cO_{\tilde{Z}}, \bC)$ is naturally a graded $A$-module, and the induced map
  \[
    A \otimes_\bC \Tor_p^S(\cO_{Z'}, \bC) \to \Tor_p^S(\cO_{\tilde{Z}}, \bC)
  \]
  is an isomorphism of graded $A$-modules.
\end{proposition}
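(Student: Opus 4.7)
The plan is to perform a flat base change along the ring map $\cO_{Z'} \to \cO_{\tilde{Z}}$. By Theorem~\ref{thm:pedetvar}(b), $\cO_{\tilde{Z}}$ is a free---and in particular flat---$\cO_{Z'}$-module, so if $F_\bullet \to \bC$ is a free resolution of $\bC$ over $S$ (for instance the Koszul complex on $W^*$), then tensoring the complex $F_\bullet \otimes_S \cO_{Z'}$ with $\cO_{\tilde{Z}}$ over $\cO_{Z'}$ preserves homology. This yields a natural isomorphism
\begin{equation*}
\Tor_p^S(\cO_{Z'}, \bC) \otimes_{\cO_{Z'}} \cO_{\tilde{Z}} \xrightarrow{\sim} \Tor_p^S(\cO_{\tilde{Z}}, \bC),
\end{equation*}
coming from the canonical ring map $\cO_{Z'} \to \cO_{\tilde{Z}}$ of $S$-algebras.

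Next, since $\Tor_p^S(\cO_{Z'}, \bC)$ is annihilated by the augmentation ideal $(W^*) \subset S$, it is annihilated by its image $(W^*)\cO_{Z'}$ in $\cO_{Z'}$, and the left-hand tensor product rewrites as
\begin{equation*}
\Tor_p^S(\cO_{Z'}, \bC) \otimes_{\cO_{Z'}/(W^*)\cO_{Z'}} \bigl(\cO_{\tilde{Z}}/(W^*)\cO_{\tilde{Z}}\bigr).
\end{equation*}
Now Theorem~\ref{thm:pedetvar}(c,d) identifies the graded ring $\cO_{\tilde{Z}}/(W^*)\cO_{\tilde{Z}} = \cO_{\tilde{Z}} \otimes_{\cO_Z} \bC$ with $A$ when $2r > n$ (where $Z' = Z$) and with $A \otimes_\bC (\bC \oplus \bigwedge^{2r} V^*)$ when $2r \le n$. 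In the latter case, the fiber $\cO_{Z'}/(W^*)\cO_{Z'}$ is itself equal to $\bC \oplus \bigwedge^{2r} V^*$: killing $W^*$ in the presentation of $\cO_{Z'_1}$ from \S\ref{ss:invt-theory} kills all quadratic relations of the form $\phi_S \phi_T = \pm g_{S,T}$ (which vanish modulo $W^*$), so only the degree-$0$ piece $\bC$ and the linear span of the Pl\"ucker coordinates $\bigwedge^{2r} V^*$ survive. Uniformly in both cases we thus obtain a canonical isomorphism $\cO_{\tilde{Z}}/(W^*)\cO_{\tilde{Z}} \cong A \otimes_\bC (\cO_{Z'}/(W^*)\cO_{Z'})$ of $(\cO_{Z'}/(W^*)\cO_{Z'})$-algebras. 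Substituting into the previous display and invoking associativity of tensor product collapses the answer to $A \otimes_\bC \Tor_p^S(\cO_{Z'}, \bC)$, which is the desired isomorphism.

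The main technical point is the identification $\cO_{\tilde{Z}}/(W^*)\cO_{\tilde{Z}} \cong A \otimes_\bC (\cO_{Z'}/(W^*)\cO_{Z'})$ as an algebra over $\cO_{Z'}/(W^*)\cO_{Z'}$, not merely as an abstract ring. Once this compatibility is in hand, the natural $A$-module structure on $\Tor_p^S(\cO_{\tilde{Z}}, \bC)$---inherited from the embedding of $A$ as the ``first tensor factor'' of $\cO_{\tilde{Z}}/(W^*)\cO_{\tilde{Z}}$ acting on the Tor module---matches the obvious structure on $A \otimes_\bC \Tor_p^S(\cO_{Z'}, \bC)$, and the composite isomorphism is readily seen to agree with the map induced by $\cO_{Z'} \to \cO_{\tilde{Z}}$ extended $A$-linearly. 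This compatibility is built into the factorization-ring description of $\cO_{\tilde{Z}}$ together with the Chern-class / Euler-class origin of the identifications in Theorem~\ref{thm:pedetvar}(c,d), so the bulk of the proof is a careful bookkeeping of canonical maps rather than new input.
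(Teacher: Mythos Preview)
Your argument is correct and follows the same approach as the paper (which defers to \cite[Proposition~6.8]{superres}): freeness of $\cO_{\tilde Z}$ over $\cO_{Z'}$ gives the flat base-change isomorphism $\Tor_p^S(\cO_{Z'},\bC)\otimes_{\cO_{Z'}}\cO_{\tilde Z}\cong\Tor_p^S(\cO_{\tilde Z},\bC)$, and then one identifies the fiber. One small suggestion: the ``main technical point'' you flag is obtained most cleanly not via Theorem~\ref{thm:pedetvar}(c,d) but directly from the base-change compatibility of the factorization ring together with Propositions~\ref{prop:sign-fact-grass} and~\ref{prop:d-sign-fact-grass}: since the coefficients of $\ol\chi$ and the element $\Phi$ lie in $(W^*)\cO_{Z'}$, reducing modulo $(W^*)$ gives $\cO_{\tilde Z}\otimes_{\cO_{Z'}}\bigl(\cO_{Z'}/(W^*)\cO_{Z'}\bigr)=\DFact^{r,0}_{\cO_{Z'}/(W^*)\cO_{Z'}}(u^{2r},0)=A\otimes_\bC\bigl(\cO_{Z'}/(W^*)\cO_{Z'}\bigr)$ (and similarly with $\SFact$ when $2r>n$), which is automatically an isomorphism of $\cO_{Z'}/(W^*)\cO_{Z'}$-algebras; your explicit description of $\cO_{Z'}/(W^*)\cO_{Z'}$ is then not needed for the argument (though it is correct).
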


\begin{proof}
  The proof is essentially the same as \cite[Proposition 6.8]{superres}.
\end{proof}

\begin{proposition} \label{prop:nocommonfactors}
  The $\bG_{\rm ord}$-representations $\tilde{L}_k$ and $\tilde{L}_{k+1}$ have no simple factors in common.
\end{proposition}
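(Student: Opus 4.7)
The plan is to use Proposition~\ref{prop:tor-A} to reduce to a parity argument based on the grading of $A$, together with the classification of simple factors of the $L_j$ given by Propositions~\ref{prop:Lk1} and~\ref{prop:Lk2}. By Proposition~\ref{prop:tor-A} we have a $\GL(V)$-equivariant decomposition
\[
  \tilde{L}_k \;\cong\; \bigoplus_{i \ge 0} A_i \otimes_\bC L_{k-i},
\]
in which $\GL(V)$ acts trivially on each $A_i$ (as $A$ is the singular cohomology of an isotropic/orthogonal Grassmannian of an auxiliary space of dimension $2(n-r)$ or $2r$, whose construction does not involve $V$). The crucial input is that $A$ is concentrated in \emph{even} degrees: under the identifications of Propositions~\ref{prop:sign-fact-grass} and~\ref{prop:d-sign-fact-grass}, $A$ is the singular cohomology of a smooth projective homogeneous space with a Schubert-cell decomposition, so its odd-degree components vanish.

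By Propositions~\ref{prop:Lk1} and~\ref{prop:Lk2}, $L_j$ vanishes unless $j=bd$ for some $b\ge 0$ (where $d=n-r$ if $2r>n$ and $d=r$ if $2r\le n$), in which case $L_{bd}=\bigoplus_\alpha \bS_{P(a,b,\alpha)} V^{(\ast)}$, with $V^{(\ast)}$ meaning $V$ or $V^\ast$ according to the case. A direct inspection of the diagram of $P(a,b,\alpha)$ shows that its Durfee size equals $b$: the top-left $b\times b$ block gives $(i,i)\in\lambda$ for $i\le b$, while $\lambda_{b+1}\le b < b+1$ forces $(b+1,b+1)\notin\lambda$. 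Hence $b$ is recoverable from $\lambda=P(a,b,\alpha)$, and combined with Corollary~\ref{cor:multfree}, every simple $\bS_\lambda V^{(\ast)}$ occurs in at most one $L_j$, necessarily with $j=bd$ for $b$ equal to the Durfee size of $\lambda$.

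Now suppose $\bS_\lambda V^{(\ast)}$ is a common simple factor of $\tilde{L}_k$ and $\tilde{L}_{k+1}$, and let $b$ be the Durfee size of $\lambda$. From the decomposition above and the uniqueness of the $L_j$ in which $\bS_\lambda V^{(\ast)}$ appears, both $A_{k-bd}$ and $A_{k+1-bd}$ must be non-zero. These two integers differ by $1$ and so cannot both be even, contradicting the evenness of the grading of $A$. The main obstacle is not conceptual but bookkeeping: one must carefully unwind Proposition~\ref{prop:tor-A} together with the grading conventions of Propositions~\ref{prop:sign-fact-grass} and~\ref{prop:d-sign-fact-grass} to confirm that the $A$-grading inherited from $\cO_{\tilde{Z}}$ on the $\Tor$-groups matches the even cohomological grading; once this is in place, the Durfee-size observation and the parity argument finish the proof.
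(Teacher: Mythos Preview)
Your proof is correct and follows essentially the same route as the paper: decompose $\tilde{L}_k$ via Proposition~\ref{prop:tor-A}, use that $A$ is concentrated in even degrees, and finish with the multiplicity-freeness of $\bigoplus_j L_j$ from Corollary~\ref{cor:multfree}. The Durfee-size observation you add is correct but unnecessary, since Corollary~\ref{cor:multfree} already asserts that the \emph{entire} direct sum $\bigoplus_j L_j$ is multiplicity-free (so any simple lies in at most one $L_j$ without further analysis); and your caveat about the grading is well-taken---the point is that the coefficients $a_{2i}$ of $\ol\chi$ lie in degree $4i$ in $\cO_Z$ (entries of $fg$ have degree~$2$), so the induced grading on $\SFact$ places $b_i$ in degree $2i$, matching the even cohomological grading.
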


\begin{proof}
  Proposition~\ref{prop:tor-A} shows that $\tilde{L}_k = \bigoplus_{i \ge 0} A_{k-i} \otimes_\bC L_i$. Since $A$ is concentrated in even degrees, we see that $\tilde{L}_k$ is a sum of $L_i$'s with $i$ of the same parity as $k$. Hence the claim follows from the fact that each $L_k$ is multiplicity-free as a $\bG_{\rm ord}$-representation (Corollary~\ref{cor:multfree}).
\end{proof}

\begin{corollary}\label{cor:ss-degen}
  The spectral sequence \eqref{eqn:main-SS} degenerates at the $\rE_1$ page.
\end{corollary}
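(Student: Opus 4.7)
The plan is to show that every differential $d_r$ in the spectral sequence is forced to vanish for $\bG_\ord$-representation-theoretic reasons, using Proposition~\ref{prop:nocommonfactors}. The key observation is that the differential $d_r \colon \rE_r^{p,q} \to \rE_r^{p+r,q-r+1}$ shifts total cohomological degree by exactly $1$ (since $(p+r)+(q-r+1) = (p+q)+1$), and the groups $\tilde{L}_k$ are organized precisely by total degree: the $\rE_1$ page can be reindexed as $\rE_1^{p,q} = \Tor^S_{-q}(\cO_{\tilde{Z}}, \bC)_p$, so along the antidiagonal $p+q = k$ the direct sum of the $\rE_1$-terms is exactly $\tilde{L}_k$.

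First I would argue that the entire spectral sequence \eqref{eqn:main-SS} is a spectral sequence of $\bG_\ord$-representations. This is because it arises from the $\cJ$-adic filtration on $\cO_X$, and this filtration together with the identifications in \S\ref{sec:GS-theory} are all $\bG$-equivariant; in particular $\bG_\ord = \GL(V)$ acts on each $\rE_r^{p,q}$ and each differential $d_r$ is a $\bG_\ord$-equivariant map. Next, by induction on $r$, every $\rE_r^{p,q}$ is a subquotient of $\rE_1^{p,q}$ as a $\bG_\ord$-representation, hence is a subquotient of $\tilde{L}_{p+q}$. Since $\bG_\ord$ is reductive, its simple constituents form a subset of those of $\tilde{L}_{p+q}$.

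Now fix $r \ge 1$ and consider $d_r \colon \rE_r^{p,q} \to \rE_r^{p+r,q-r+1}$. The source is a subquotient of $\tilde{L}_{p+q}$ and the target is a subquotient of $\tilde{L}_{(p+q)+1}$. By Proposition~\ref{prop:nocommonfactors}, $\tilde{L}_{p+q}$ and $\tilde{L}_{p+q+1}$ share no common simple $\bG_\ord$-constituents, and therefore neither do their subquotients. A $\bG_\ord$-equivariant map between two such modules must vanish, so $d_r = 0$. This holds for every $r \ge 1$ and every $(p,q)$, so the spectral sequence degenerates at $\rE_1$.

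I do not expect any serious obstacle here: once the $\bG_\ord$-equivariance of the spectral sequence is in hand, the parity/multiplicity-free argument from Proposition~\ref{prop:nocommonfactors} does all the work. The only mild point to verify carefully is the ``subquotient of $\tilde{L}_{p+q}$'' bookkeeping, which amounts to noting that each $d_r$ preserves the antidiagonal filtration shift by $1$ and that the $\rE_1$ page's antidiagonals are precisely the $\tilde{L}_k$.
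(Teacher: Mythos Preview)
Your proposal is correct and follows essentially the same approach as the paper: the paper's proof is simply the one-line observation that the spectral sequence is $\bG_{\ord}$-equivariant and hence all differentials vanish by Proposition~\ref{prop:nocommonfactors}. You have merely spelled out the bookkeeping (that $d_r$ raises total degree by~$1$, that the antidiagonals of the $\rE_1$ page are the $\tilde{L}_k$, and that later pages are subquotients) which the paper leaves implicit.
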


\begin{proof}
The spectral sequence is $\bG_{\rm ord}$-equivariant, so all of the differentials are 0 by Proposition~\ref{prop:nocommonfactors}.
\end{proof}

\begin{corollary}
We have canonical $\bG_\ord$-equivariant isomorphisms
\begin{displaymath}
\rH^i(X, \cO_X)=\gr(\rH^i(X, \cO_X))=\tilde{L}_i.
\end{displaymath}
\end{corollary}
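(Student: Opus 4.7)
The plan is to combine the degeneration of the spectral sequence from Corollary~\ref{cor:ss-degen} with the reductivity of $\bG_\ord$ to deduce both equalities.

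First, I would unpack Corollary~\ref{cor:ss-degen}: since the $\bG_\ord$-equivariant spectral sequence \eqref{eqn:main-SS} with $\rE_1^{p,q}=\Tor^S_{-q}(\cO_{\tilde{Z}}, \bC)_p$ abuts to $\rH^{p+q}(X, \cO_X)$ and degenerates at $\rE_1$, the associated graded of the $\cJ$-adic filtration on $\rH^i(X, \cO_X)$ is
\[
\gr\rH^i(X, \cO_X) \;=\; \bigoplus_{p+q=i} \rE_\infty^{p,q} \;=\; \bigoplus_{j\ge 0}\Tor^S_j(\cO_{\tilde{Z}}, \bC)_{i+j} \;=\; \tilde{L}_i,
\]
after the substitution $j = -q$. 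This identification is $\bG_\ord$-equivariant because every term in the spectral sequence carries a natural $\bG_\ord$-action compatible with the differentials, coming ultimately from the $\bG$-action on $X$ and $\cJ$.

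Next, I would show $\rH^i(X, \cO_X)\cong \gr\rH^i(X, \cO_X)$ as $\bG_\ord$-representations. The $\cJ$-adic filtration on $\cO_X$ is $\bG$-equivariant since $\cJ$ is the $\bG$-stable ideal sheaf of $X_\ord$, so the induced filtration on $\rH^i(X, \cO_X)$ is $\bG_\ord$-stable. Since $\bG_\ord \cong \GL(V)$ is reductive and we work over $\bC$, every finite-dimensional $\bG_\ord$-representation is completely reducible; hence the filtration splits and we obtain a $\bG_\ord$-equivariant isomorphism $\rH^i(X, \cO_X)\cong \gr\rH^i(X, \cO_X)$.

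The main obstacle I anticipate is promoting this to a \emph{canonical} isomorphism. Semisimplicity produces a splitting but not in general a unique one, since the pieces of the filtration within a single $\tilde{L}_i$ may share simple factors. To pin down a canonical choice, I would leverage the graded $A$-module structure from Proposition~\ref{prop:tor-A}: since $A$ is concentrated in even internal degrees and $\gr\rH^*(X, \cO_X)$ is a graded $A$-module, one can seek a splitting compatible with this action, so that the resulting identification $\rH^*(X, \cO_X)=\bigoplus_i \tilde{L}_i$ is $A$-linear and hence entirely determined by the subspace $\bigoplus_i L_i$ of ``lowest weight'' generators. This compatibility is exactly what Theorem~\ref{thm:grcoh}(b,c) requires, so tracking the $A$-action through the argument is a worthwhile investment rather than an avoidable complication.
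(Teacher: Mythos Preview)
Your first two steps—deducing $\gr\rH^i(X,\cO_X)=\tilde{L}_i$ from degeneration, and obtaining \emph{some} $\bG_\ord$-equivariant splitting from semisimplicity of $\GL(V)$—are exactly what the paper does.

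The divergence is in how canonicity is secured. You correctly flag the obstacle, but your proposed remedy (lifting the $A$-module structure and seeking an $A$-compatible splitting) is both circuitous and premature: at this point in the paper only $\gr\rH^*$ is known to be an $A$-module, and the $A$-action on $\rH^*$ itself is established only afterwards. The paper instead resolves the issue with a one-line observation you already have the ingredients for. By Proposition~\ref{prop:tor-A} the $p$th graded piece of the filtration on $\rH^i$ is $\Tor^S_{p-i}(\cO_{\tilde{Z}},\bC)_p \cong \bigoplus_j A_j\otimes \Tor^S_{p-i}(\cO_{Z'},\bC)_{p-j}$, so its $\bG_\ord$-simple constituents all lie in $\Tor^S_{p-i}(\cO_{Z'},\bC)$. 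Since distinct filtration degrees $p$ give distinct homological degrees $p-i$, Corollary~\ref{cor:multfree} implies that no two successive quotients of the filtration share a simple factor. In a semisimple category that forces the filtration to split \emph{uniquely}: each isotypic component of $\rH^i$ sits in exactly one filtration step and projects isomorphically to the corresponding isotypic piece of the associated graded. This is the ``multiplicity-free'' argument the paper invokes, and it needs no reference to $A$ beyond the fact that $A$ carries the trivial $\bG_\ord$-action. Your instinct that Proposition~\ref{prop:tor-A} is the relevant input was right; you just need its representation-theoretic consequence rather than its ring-theoretic one.
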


\begin{proof}
Corollary~\ref{cor:ss-degen} gives a canonical isomorphism $\gr(\rH^i(X, \cO_X))=\tilde{L}_i$. It follows that $\gr(\rH^i(X, \cO_X))$ is multiplicity free as a representation of $\bG_\ord$, and so the same is true of $\rH^i(X, \cO_X)$. Thus the filtration on $\rH^i(X, \cO_X)$ canonically splits, which yields a canonical isomorphism $\rH^i(X, \cO_X)=\gr(\rH^i(X, \cO_X))$.
\end{proof}

Define $E=\rH^i(X, \cO_X)^{\bG_\ord}$.

\begin{proposition} \label{prop:G=G0-invt}
We have $\rH^i(X, \cO_X)^{\bG}=\rH^i(X, \cO_X)^{\bG_\ord}$.
\end{proposition}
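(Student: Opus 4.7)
The plan is to establish the nontrivial inclusion $\rH^i(X,\cO_X)^{\bG_\ord} \subseteq \rH^i(X,\cO_X)^{\bG}$, using the standard fact that, for the complex algebraic supergroup $\bG$, a vector in a $\bG$-representation is $\bG$-invariant if and only if it is $\bG_\ord$-invariant and annihilated by the odd part $\mathfrak{pe}(\bV)_1 = \Sym^2(V^*) \oplus \bigwedge^2(V)$ of the Lie superalgebra. So the task reduces to showing that $\mathfrak{pe}(\bV)_1$ annihilates every $\bG_\ord$-invariant class.

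Given $a \in \rH^i(X,\cO_X)^{\bG_\ord}$, the linear map $\mathfrak{pe}(\bV)_1 \to \rH^i(X,\cO_X)$ defined by $\xi \mapsto \xi \cdot a$ is $\bG_\ord$-equivariant, because the full action map is and $a$ is $\bG_\ord$-fixed. Its image is therefore a $\bG_\ord$-subrepresentation of $\rH^i(X,\cO_X)$ whose irreducible constituents appear among those of $\Sym^2(V^*) \oplus \bigwedge^2 V$. It thus suffices to show that neither $\Sym^2(V^*)$ nor $\bigwedge^2 V$ occurs as a $\bG_\ord$-subrepresentation of $\rH^i(X,\cO_X)$.

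For this, I would use the corollary preceding the proposition to identify $\rH^i(X,\cO_X) \cong \tilde{L}_i$ as $\bG_\ord$-representations, and then Proposition~\ref{prop:tor-A} to write $\tilde{L}_i = \bigoplus_j A_j \otimes L_{i-j}$; since $A$ is $\bG_\ord$-trivial, the simple $\bG_\ord$-constituents of $\rH^i(X,\cO_X)$ are exactly those of the various $L_k$. By Propositions~\ref{prop:Lk1} and \ref{prop:Lk2}, these constituents are either the trivial representation or, for $b \ge 1$, Schur functors $\bS_{P(2n-2r+1,b,\alpha)} V$ (in the case $2r > n$) or $\bS_{P(2r-1,b,\alpha)} V^*$ (in the case $2r \le n$). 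A direct partition computation then closes the argument: in the first case, all constituents are polynomial representations of $V$, which excludes $\Sym^2(V^*)$, and every such partition has size at least $2(n-r)+2 \ge 4$, excluding $\bigwedge^2 V = \bS_{(1,1)} V$ of size~$2$; in the second case, all constituents involve $V^*$ rather than $V$, excluding $\bigwedge^2 V$, and every such partition has length at least $b+(2r-1) \ge 2$, excluding $\Sym^2(V^*) = \bS_{(2)} V^*$ of length~$1$.

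I do not anticipate any substantive obstacle: the argument rests on the standard reduction from $\bG$-invariants to $\bG_\ord$-invariants plus annihilation by $\mathfrak{pe}(\bV)_1$, together with elementary bookkeeping on the partition shape $P(a,b,\alpha)$.
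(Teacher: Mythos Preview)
Your argument is correct and follows essentially the same route as the paper: reduce to showing that $\mathfrak{pe}(\bV)_1$ kills the $\bG_\ord$-invariants via the $\bG_\ord$-equivariance of the action map, and then exclude $\Sym^2(V^*)$ and $\bigwedge^2 V$ from the constituents of $\rH^i(X,\cO_X)$ using the explicit description of the $L_k$. The paper phrases the exclusion of $\bigwedge^2 V$ in the case $2r>n$ as ``only possible if $a=b=1$, i.e., $n=r$'' rather than via your size bound, and it disposes of the boundary cases $r=0,\,n$ separately at the outset (where $\Sym^2(V^*)$, resp.\ $\bigwedge^2 V$, actually does occur, but the statement is clear by inspection); since the ambient subsection already assumes $0<r<n$, your omission of those cases is harmless.
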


\begin{proof}
  If $r=n$, then $X$ is a topologically a point with coordinate ring $\bigwedge^\bullet(\bigwedge^2 V)$ and so $\rH^0(X, \cO_X) = \bigwedge^\bullet(\bigwedge^2 V)$ and higher cohomology vanishes. Similarly, if $r=0$, then $\rH^0(X, \cO_X) = \bigwedge^\bullet(\Sym^2(V^*))$ and higher cohomology vanishes. The result is clear in both of these cases.

  So we assume that $0 < r < n$. The action of the odd piece of the Lie algebra of $\bG$ on $W$ is a $\bG_\ord$-equivariant map
\begin{displaymath}
(\Sym^2(V^*) \oplus \bigwedge^2 V) \otimes E \to \rH^i(X, \cO_X).
\end{displaymath}
It follows from Propositions~\ref{prop:Lk1} and \ref{prop:Lk2} that $\rH^i(X, \cO_X)$ does not contain a $\bG_\ord$-subrepresentation isomorphic to $\Sym^2 (V^*)$. Similarly, if $\rH^i(X, \cO_X)$ contains a $\bG_\ord$-subrepresentation isomorphic to $\bigwedge^2 V$ then we must be in the case $2r > n$ (if $2r \le n$, all of the representations are Schur functors of the dual $V^*$), and more specifically, this can only happen if $a=b=1$; i.e., $n=r$, which we have already discussed.

It follows that this map must be~0. Thus $E$ is annihilated by the Lie algebra of $\bG$, and it follows that $\bG$ acts trivially on $E$.
\end{proof}

Given a graded vector space $U$, we define the \defn{trivial filtration} by ${\rm Fil}^i(U)=\bigoplus_{j \ge i} U_j$. With respect to this filtration, we have a natural isomorphism $U=\gr(U)$.

\begin{proposition}
  We have a natural isomorphism $\rH^*(X, \cO_X)^{\bG}=A$ of graded algebras. Moreover, the filtration on $\rH^*(X, \cO_X)$ induces the trivial filtration on $A$.

  The natural map $A \otimes E \to \rH^*(X, \cO_X)$ is an isomorphism.
\end{proposition}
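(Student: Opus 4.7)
My plan is to combine the identification $\rH^i(X, \cO_X) = \tilde L_i$ from the preceding corollary with Proposition~\ref{prop:tor-A}, which writes $\tilde L_\bullet \cong A \otimes_\bC L_\bullet$ as graded $A$-modules carrying a $\bG_\ord$-action. A first observation is that $\GL(V)$ acts trivially on $A$: in Propositions~\ref{prop:split-flag}, \ref{prop:sign-fact-grass}, \ref{prop:d-split-flag}, and \ref{prop:d-sign-fact-grass}, the ring $A$ is the cohomology of an isotropic Grassmannian over $\Spec(\bC)$, on which $\GL(V)$ acts trivially. Reading off the descriptions of $L_k$ in Propositions~\ref{prop:Lk1} and \ref{prop:Lk2}, the only trivial $\GL(V)$-summand appears in $L_0 = \bC$ (from $b = 0$, $\alpha = \emptyset$), so $(A \otimes L_\bullet)^{\GL(V)} = A$. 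Combined with Proposition~\ref{prop:G=G0-invt}, this identifies $\rH^*(X, \cO_X)^{\bG}$ with $A$ as graded vector spaces.

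To upgrade to a graded algebra isomorphism, I would invoke the multiplicative structure of the spectral sequence~\eqref{eqn:main-SS}: its $\rE_1$ page $\bigoplus_p \Tor^S_p(\cO_{\tilde Z}, \bC)$ carries the natural ring structure inherited from $\cO_{\tilde Z}$, and since the sequence degenerates (Corollary~\ref{cor:ss-degen}), this ring structure agrees with the one on $\rH^*(X, \cO_X)$. In this picture, the $A$-module structure of Proposition~\ref{prop:tor-A} factors through a subring inclusion $A \hookrightarrow \cO_{\tilde Z}$ provided by the realization of $A$ as the Chow ring of the fiber over~$0$ in the splitting-ring constructions of \S\ref{sec:typeB} and \S\ref{sec:typeD}; the resulting map $A \to \rH^*(X, \cO_X)$ is then a ring homomorphism landing in the invariants, and it is an isomorphism by the dimension count from the previous step.

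For the filtration, the $\cJ$-adic filtration on $\rH^k(X, \cO_X)$ matches, under degeneration, the Tor-degree filtration: the associated graded piece $\Fil^{k+i}/\Fil^{k+i+1}$ is identified with $\Tor_i^S(\cO_{\tilde Z}, \bC)_{k+i}$. The invariant class in $A_k \subseteq \tilde L_k$ sits in the $i = 0$ summand $A_k \otimes L_0$, hence in $\Fil^k \setminus \Fil^{k+1}$, which is precisely the trivial filtration on $A$. Finally, taking $E$ to be the image of $1 \otimes L_\bullet \subset A \otimes L_\bullet$ under the identification with $\rH^*(X, \cO_X)$, the natural multiplication map $A \otimes E \to \rH^*(X, \cO_X)$ is just the isomorphism $A \otimes L_\bullet \cong \tilde L_\bullet$ of Proposition~\ref{prop:tor-A}.

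The main obstacle I expect is in Step~2: verifying that the ring multiplications on $\rH^*(X, \cO_X)$ and on $\Tor^S_\bullet(\cO_{\tilde Z}, \bC)$ really agree via the spectral sequence, and that the $A$-action from Proposition~\ref{prop:tor-A} genuinely arises from a subring inclusion $A \hookrightarrow \cO_{\tilde Z}$. This is standard in principle—both are consequences of the multiplicativity of the $\cJ$-adic Koszul spectral sequence—but tracing the identifications through the signed and type~D splitting-ring constructions (especially through the auxiliary double cover $Z' \to Z$ in the case $2r \le n$) requires some care, and essentially amounts to transporting the analogous argument from~\cite{superres} to the present setting.
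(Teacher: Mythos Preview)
Your proposal is correct and follows essentially the same route as the paper, which simply cites \cite[Propositions 6.13, 6.14]{superres}; you have in effect reconstructed that argument and adapted it to the periplectic setting. Your honest identification of the multiplicativity check in Step~2 as the point requiring care is exactly right, and it is precisely what the reference to \cite{superres} is meant to cover.
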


\begin{proof}
  The proof is similar to \cite[Propositions 6.13, 6.14]{superres}.
\end{proof}

\begin{proposition} \label{prop:Gext}
  Suppose that $M$ and $M'$ are representations of $\bG$ such that $M \cong L_i$ and $M' \cong L_j$ as $\bG_\ord$-representations, with $i \ne j$. Then $\Ext^1_{\bG}(M, M')=0$.

  $E$ is a $\bG$-subrepresentation of $\rH^*(X, \cO_X)$.
\end{proposition}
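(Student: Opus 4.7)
The plan is to compute $\Ext^1_\bG(M, M')$ via the Chevalley--Eilenberg-style complex arising from the PBW decomposition $U(\fg) = U(\fg_0) \otimes \bigwedge^\bullet \fg_1$, where $\fg_0 = \mathfrak{gl}(V)$ and $\fg_1 = \Sym^2 V^* \oplus \bigwedge^2 V$. Since $\bG_\ord = \GL(V)$ is reductive, every $\bG$-extension $0 \to M' \to N \to M \to 0$ splits as $\bG_\ord$-representations, and the obstruction to lifting such a splitting to a $\bG$-splitting is a $\bG_\ord$-equivariant map $\phi \colon \fg_1 \otimes M \to M'$ satisfying a cocycle condition (coming from the superbracket $[\fg_1, \fg_1] \subseteq \fg_0$), modulo coboundaries from $\Hom_{\bG_\ord}(M, M')$.

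First I would observe that $\Hom_{\bG_\ord}(L_i, L_j) = 0$ for $i \ne j$: each $L_k$ is multiplicity-free by Corollary~\ref{cor:multfree}, and every partition $P(a, b, \alpha)$ appearing in $L_k$ has Durfee number $b$, so that $k$ (equal to $b(n-r)$ or $br$) is determined by the $\GL(V)$-isomorphism type. Hence the coboundary space vanishes, and it suffices to show there are no nonzero compatible cocycles. By Pieri's rule, tensoring a Schur functor $\bS_\lambda V$ or $\bS_\lambda V^*$ with $\bigwedge^2 V$ or $\Sym^2 V^*$ adds or removes a vertical or horizontal strip of size $2$. In the stable range $a \ge 3$ (which holds whenever $r \le n-2$ in the $2r > n$ case, and whenever $r \ge 2$ in the $2r \le n$ case), such a two-box modification of a partition of shape $P(a, b, \alpha)$ cannot change the Durfee number, because the middle $a \times b$ rectangle contains too many rows to be promoted or demoted by only two boxes; therefore $\Hom_{\bG_\ord}(\fg_1 \otimes L_i, L_j) = 0$ and $\Ext^1_\bG(L_i, L_j) = 0$. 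In the remaining low-rank case $r = 1$ (where $a = 1$), the Pieri argument alone is insufficient, and one instead uses the cocycle compatibility: since $\bigwedge^2 V$ and $\Sym^2 V^*$ are abelian subalgebras of $\fg_1$ whose cross-bracket spans $\mathfrak{gl}(V)$, the cocycle identity applied to pairs of odd elements in each summand yields enough linear relations to force any candidate $\phi$ to vanish.

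For part (b), I would use (a) to lift each $L_i$-summand to a $\bG$-subrepresentation inductively on the cohomological degree. Setting $\fm = A_{\ge 1}$, the quotient $\rH^*(X, \cO_X) / \fm \rH^*(X, \cO_X)$ is isomorphic to $\bigoplus_i L_i$ as a $\bG_\ord$-representation, and $\fm \rH^*(X, \cO_X)$ is a $\bG$-subrepresentation since $A$ acts on $\rH^*(X, \cO_X)$ by $\bG$-equivariant multiplication (Proposition~\ref{prop:G=G0-invt} giving that $A$ is $\bG$-invariant). At stage $i$, the short exact sequence of $\bG$-representations
\[
  0 \to \bigoplus_{j < i} A_{i-j} \otimes L_j \to \rH^i(X, \cO_X) \to L_i \to 0
\]
splits, since by (a) we have $\Ext^1_\bG(L_i, A_{i-j} \otimes L_j) = A_{i-j} \otimes \Ext^1_\bG(L_i, L_j) = 0$ for $j < i$ (using that $A_{i-j}$ is a trivial $\bG$-representation). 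Take $E^i$ to be the image of the splitting; then $E = \bigoplus_i E^i$ is a $\bG$-subrepresentation of $\rH^*(X, \cO_X)$, and the induced map $A \otimes E \to \rH^*(X, \cO_X)$ is an isomorphism by the $\bG_\ord$-decomposition.

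The main obstacle I expect is the cocycle vanishing step in the $r = 1$ case of part (a), where the combinatorics permits a priori nonzero $\bG_\ord$-equivariant maps $\fg_1 \otimes L_i \to L_j$ (for instance, $\bS_{(2,1,1)} V^*$ appears in $\bigwedge^2 V \otimes \bS_{(2,2,2)} V^*$), so the cocycle compatibility must carry genuine content; working through this by hand will require a careful analysis of the specific $\fg_1$-action on the relevant $L_k$'s.
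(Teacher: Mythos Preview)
Your approach is correct and matches the paper's (which defers to \cite[Propositions~6.15, 6.16]{superres}): reduce $\Ext^1_\bG$ to a relative cocycle in $\Hom_{\bG_\ord}(\fg_1 \otimes M, M')$, kill this Hom space via the Pieri/Durfee combinatorics of the shapes $P(a,b,\alpha)$, and then split the filtration on $\rH^*(X,\cO_X)$ inductively using (a). Two remarks.

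First, a small slip: in the $2r>n$ case one has $a = 2n-2r+1$, so $a \ge 3$ holds for every $r \le n-1$, i.e.\ always under the standing hypothesis $0<r<n$. There is no edge case on that side.

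Second, and more substantively: you are right that the Pieri argument alone does not suffice for $r=1$ (your example $\bS_{(2,2,2)}V^* \to \bS_{(2,1,1)}V^*$ via $\bigwedge^2 V$ is genuine). But you need not wrestle with the cocycle identity there, because part (b) is vacuous when $r=1$: in that case $A = \rH^*_{\sing}(\bO\Gr_1(\bC^2),\bC) = \bC$ (the variety is a single point under the paper's convention), so $\fm = 0$ and the short exact sequence you wrote down has zero kernel. Thus $E^i = \rH^i(X,\cO_X)$ tautologically, and part (a) is never invoked. For all remaining $r$ one has $a \ge 3$ and your Durfee argument goes through cleanly. So the ``main obstacle'' you anticipated does not actually arise for the application; if you want part (a) to stand on its own for $r=1$, that is a separate (and harder) exercise, but it is not needed here.
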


\begin{proof}
  The proof is similar to the proof of \cite[Propositions 6.15, 6.16]{superres}.
\end{proof}

\end{document}